\numberwithin{equation}{section}
\theoremstyle{plain}
\newtheorem{Thm}[equation]{Theorem}
\newtheorem{Prop}[equation]{Proposition}
\newtheorem{Cor}[equation]{Corollary}
\newtheorem{Lem}[equation]{Lemma}
\theoremstyle{definition}
\newtheorem{Def}[equation]{Definition}
\newtheorem{Rmk}[equation]{Remark}
\newenvironment{red}{\relax\color{red}}{\relax}
\newenvironment{blue}{\relax\color{blue}}{\hspace*{.5ex}\relax}
\newcommand{\ber}{\begin{red}}
\newcommand{\er}{\end{red}}
\newcommand{\beb}{\begin{blue}}
\newcommand{\eb}{\end{blue}}
\newcommand{\bw}{{\boldsymbol{w}}}
\begin{document}
%\iffalse
\title{New Hereditary and Mutation-Invariant Properties Arising from Forks}
\author[T. J. Ervin]{Tucker J. Ervin}
\address{Department of Mathematics, University of Alabama,
	Tuscaloosa, AL 35487, U.S.A.}
\email{tjervin@crimson.ua.edu}

%\subjclass[2010]{Primary ??; Secondary ??}

\begin{abstract}
A hereditary property of quivers is a property preserved by restriction to any full subquiver.
Similarly, a mutation-invariant property of quivers is a property preserved by mutation.
Using forks, a class of quivers developed by M. Warkentin, we introduce a new hereditary and mutation-invariant property.
We prove that a quiver being mutation-equivalent to a finite number of non-forks --- defined as having a finite forkless part --- is this new property, using only elementary methods.
Additionally, we show that a more general property --- having a finite pre-forkless part --- is also a new hereditary and mutation-invariant property in much the same manner.
\end{abstract}

\maketitle

\section{Introduction} \label{sec-introduction}

Properties of quivers and quiver mutations are important objects of study in terms of cluster algebras and in their own right. 
Those that are hereditary and mutation-invariant are some of the most interesting, as they are preserved under both mutation and taking any full subquiver.
The most comprehensive list of hereditary and mutation-invariant properties we are aware of  --- to which we have added a few properties --- comes from Fomin, Igusa, and Lee's paper on universal quivers \cite[Example 5.2]{fomin_universal_2021}.
Let $Q$ be a quiver.
Then the following are hereditary and mutation-invariant properties:
\begin{itemize}
    \item $Q$ has finite type \cite{fomin_cluster_2003}, \cite{fomin_introduction_2021};

    \item $Q$ has finite mutation type \cite{fomin_introduction_2021};

    \item $Q$ is a full subquiver of a certain infinite quiver \cite{henrich_mutation_2011};

    \item $Q$ has finite/tame representation type \cite{holm_cluster_2010};

    \item Any quiver mutation-equivalent to $Q$ has arrow multiplicities $\leq k$ (for some $k \in \mathbb{Z}_{>0}$)\footnote{Note that this implies $Q$ is mutation-finite} \cite{fomin_universal_2021};

    \item $Q$ is mutation-acyclic \cite{fomin_universal_2021};

    \item The mutation class of $Q$ is embeddable into a mutation class \cite{fomin_introduction_2021};

    \item $Q$ admits a reddening sequence \cite{muller_existence_2016};

    \item The arrow multiplicities in $Q$ are all divisible by $d$ (for some $d \in \mathbb{Z}_{> 0}$)\footnote{It is a hereditary property by definition, and it is easily shown to be mutation-invariant};

    \item Any quiver mutation-equivalent to $Q$ has $k$ or more arrows between each pair of vertices (for some $k \in \mathbb{Z}_{>0}$)\footnote{It is a mutation-invariant property by definition, and it is easily shown to be hereditary. Thanks to Scott Neville for the name}. If $k = 2$, then we say that $Q$ is \textit{mutation-abundant}.
\end{itemize}
We will add to this list with Theorems \ref{thm-forkless-quiver-intro} and \ref{thm-pre-forkless-quiver-intro}.

In 2014, Matthias Warkentin's dissertation presented a special type of quiver, called a \textit{fork} \cite{warkentin_exchange_2014}. 
One of the great advantages of forks is that their properties can be proven using only combinatorial methods.
This makes them of particular interest when studying quivers and quiver mutation without the use of category theory or representation theory.
Warkentin was able to show several important results in this way: 
\begin{itemize}
    \item Every fork is the root of a tree in its mutation graph \cite[Lemma 2.8]{warkentin_exchange_2014};
    
    \item A connected quiver is mutation-infinite if and only if it is mutation-equivalent to a fork \cite[Theorem 3.2]{warkentin_exchange_2014};

    \item The ratio of non-forks to forks in the mutation graph of a connected mutation-infinite quiver tends to 0 as we mutate farther away from our original quiver \cite[Proposition 5.4]{warkentin_exchange_2014}.
\end{itemize}

The last property inspired this paper, producing the following two questions.
Are there quivers that are mutation-equivalent to a finite number of non-forks?
Equivalently, are there quivers that have a finite ``forkless'' part?
If such a quiver exists, this property is naturally a mutation-invariant one.
Is it also a hereditary property?
The first question was answered affirmatively by Warkentin for an example on 3 vertices and a more general family of quivers, but we will answer the second question with the following theorem.

\begin{Thm} \label{thm-forkless-quiver-intro}
    Let $Q$ be a quiver that has a finite forkless part.
    Then any (possibly disconnected) full subquiver $Q'$ also has a finite forkless part.
    Furthermore, if $M$ is the number of non-forks mutation-equivalent to $Q$ and $m$ is the number of vertices in $Q'$, then the number of non-forks mutation-equivalent to $Q'$ is bounded by $\max\{M, m\}$.
\end{Thm}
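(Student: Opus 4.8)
The plan is to reduce the statement to a structural lemma about how forks restrict to full subquivers, and then to a bookkeeping argument comparing the two mutation classes.

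\textbf{Mutation commutes with restriction inside the subquiver.} Write $Q'$ for the full subquiver of $Q$ on a vertex set $W$ with $|W|=m$. The first observation is elementary: if $v\in W$, then $\mu_v(Q)|_W=\mu_v(Q|_W)$, since the mutation rule at $v$ reads off only the arrows incident to $v$, and these are recorded identically in $Q$ and in $Q'$. Iterating along any mutation sequence supported on $W$, every quiver $R'$ in the mutation class of $Q'$ equals $R|_W$ for the quiver $R$ in the mutation class of $Q$ produced by the same sequence. Hence the non-forks mutation-equivalent to $Q'$ all lie in $\{\,R|_W : R \text{ mutation-equivalent to } Q\,\}$.

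\textbf{The restriction lemma for forks.} The heart of the argument is the lemma: if $R$ is a fork with point of return $r$ and $|W|\ge 3$, then $R|_W$ is again a fork when $r\in W$, and is acyclic --- hence, since a fork carries an oriented cycle through its point of return, \emph{not} a fork --- when $r\notin W$. The second case is immediate, as $R\setminus r$ is acyclic by definition of a fork, so any full subquiver avoiding $r$ is acyclic. The first case is what I expect to be the main obstacle: I would verify Warkentin's defining conditions one at a time, checking that acyclicity of $R\setminus r$ restricts to acyclicity of $(R|_W)\setminus r$ and that each abundance inequality relating the arrow multiplicities at $r$ to the multiplicities among the remaining vertices survives, on the grounds that passing to a full subquiver only discards such conditions and never violates one; one also checks that $R|_W$ remains connected, or works with whatever connectivity the definition of fork carries.

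\textbf{Counting.} Let $N_1,\dots,N_M$ be the non-forks mutation-equivalent to $Q$, and let $R'$ be a non-fork mutation-equivalent to $Q'$, written $R'=R|_W$ with $R$ mutation-equivalent to $Q$. If $R$ is not a fork then $R\in\{N_1,\dots,N_M\}$ and $R'\in\{N_1|_W,\dots,N_M|_W\}$. If $R$ is a fork then, by the lemma, its point of return lies outside $W$ (otherwise $R'$ would be a fork) and $R'$ is acyclic. So the non-forks mutation-equivalent to $Q'$ lie in the union of the at most $M$ quivers $N_i|_W$ with the set of acyclic quivers in the mutation class of $Q'$.

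\textbf{Pinning the constant $\max\{M,m\}$.} To finish I would split into cases. If $m\le 2$, every quiver on $W$ is a non-fork but the mutation class of $Q'$ has at most $m$ elements (mutation on at most two vertices only reverses arrows), which gives the bound $m$. If $Q'$ --- or, in the disconnected case, each connected component of $Q'$ --- has finite mutation type, then it has no forks in its mutation class and the restriction map embeds that class into the mutation class of $Q$, so the bound $M$ applies; here I use that finite mutation type is hereditary. In the remaining case I would bound the acyclic members of the mutation class of $Q'$ and combine the estimates. Obtaining the \emph{exact} constant $\max\{M,m\}$ rather than mere finiteness --- that is, controlling the acyclic restrictions precisely --- together with the bookkeeping for disconnected $Q'$ (where no member of the mutation class is a fork), is the part I expect to require the most care.
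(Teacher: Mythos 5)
Your skeleton (mutation commutes with restriction to the vertex set $W$; classify the restrictions of forks; then count) is the same as the paper's, but two concrete problems remain. First, your restriction lemma is false as stated: when the point of return $r$ lies in $W$, the restriction $R|_W$ need \emph{not} be a fork --- if $W\setminus\{r\}$ misses one of $R^-(r)$ or $R^+(r)$ entirely, the restriction is acyclic, and a fork is by definition not acyclic. The correct statement, which the paper simply imports as Warkentin's Lemma~\ref{lem-fork-acyclic}, is that a full subquiver of a fork is either a fork with the same point of return or \emph{abundant} acyclic. The abundance is not cosmetic; it is precisely what makes the count below possible, and your weaker conclusion ``acyclic'' is not enough.

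Second, and more seriously, the endgame is missing its key ingredient, and you defer exactly that point. Knowing only that non-fork restrictions of forks are acyclic leaves you needing to bound the number of acyclic quivers in $[Q']$, for which you offer no tool (for mutation-infinite quivers this is not obviously finite by elementary means); and even granted such a bound, your bookkeeping would give a sum --- at most $M$ restrictions of non-forks plus the acyclic ones --- rather than $\max\{M,m\}$. The paper obtains the max from a dichotomy: if $Q'$ is mutation-equivalent to an abundant acyclic quiver, then Lemma~\ref{lem-abundant-acyclic-count} (proved from the unique acyclic ordering, hence unique source and sink, of an abundant acyclic quiver, together with Warkentin's Lemmata~\ref{lem-fork-mutation} and~\ref{lem-fork-tree}) shows its labelled mutation class contains \emph{exactly} $m$ non-forks, namely the source/sink $m$-cycle, so the whole forkless part of $Q'$ has size $m$; otherwise every non-fork in $[Q']$ is the restriction of one of the $M$ non-forks in $[Q]$, giving the bound $M$. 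That counting lemma for abundant acyclic quivers --- plus the tree lemma guaranteeing that once one mutates into a fork one never returns to a non-fork --- is the idea your plan lacks; without it the ``remaining case'' you acknowledge cannot be closed, either to finiteness or to the stated constant.
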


This theorem also generalizes nicely to Warkentin's \textit{pre-forks}, which are themselves generalizations of forks.

\begin{Thm} \label{thm-pre-forkless-quiver-intro}
    Let $Q$ be a quiver that has a finite pre-forkless part.
    Then any (possibly disconnected) full subquiver $Q'$ also has a finite pre-forkless part.
    Furthermore, if $M$ is the number of non-pre-forks mutation-equivalent to $Q$ and $m$ is the number of vertices in $Q'$, then the number of non-pre-forks mutation-equivalent to $Q'$ is bounded by $\max\{M,10m-20\}$.
\end{Thm}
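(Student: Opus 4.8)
The plan is to mirror the proof of Theorem~\ref{thm-forkless-quiver-intro}, replacing each appeal to the combinatorics of forks by its analogue for pre-forks and then redoing the bookkeeping. Write $S$ for the vertex set of $Q'$, so $Q' = Q|_S$ and $m = |S|$. The first ingredient is the elementary fact that restriction to $S$ commutes with mutation at a vertex of $S$: if $i \in S$ then $(\mu_i Q)|_S = \mu_i(Q|_S)$, since mutation at $i$ introduces new arrows only along paths through $i$, and between two vertices of $S$ such a path uses only arrows among vertices of $S$ (2-cycle cancellations involving a vertex outside $S$ are invisible to the restriction). Iterating --- which accounts for every mutation sequence relevant to $Q'$, as $Q'$ has no vertices outside $S$ --- shows that every quiver mutation-equivalent to $Q'$ is of the form $P|_S$ for a quiver $P$ mutation-equivalent to $Q$ reached by the identical sequence of mutations.

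The second ingredient is the pre-fork counterpart of the fact (used for Theorem~\ref{thm-forkless-quiver-intro}) that a fork restricted through its point of return stays a fork: if $P$ is a pre-fork with point of return $r$ and $S$ contains $r$ with $|S| \ge 3$, then $P|_S$ is a pre-fork with point of return $r$. This yields the dichotomy that powers the proof. Let $P'$ be a non-pre-fork mutation-equivalent to $Q'$ and choose a lift $P$ as above. Either $P$ is a non-pre-fork of $Q$ --- and since there are only $M$ of those, at most $M$ quivers arise this way as $P|_S$ --- or $P$ is a pre-fork whose point of return lies outside $S$, for its point of return cannot lie in $S$ without making $P|_S$ a pre-fork. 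So it remains to control the non-pre-forks of $Q'$ that arise only through lifts of the second kind.

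For that I would use Warkentin's structure theory of pre-forks on two further points: mutating a pre-fork at a vertex other than its point of return again gives a pre-fork with the same point of return (so, via the commutation lemma, the ``exceptional'' non-pre-forks of $Q'$ behave coherently under mutation within the class of $Q'$), and the explicit description of a pre-fork with its point of return deleted together with the classification of pre-forks that are not forks, which confines $P|_S = (P \setminus r)|_S$ to a finite family of quivers on $m$ vertices. Carrying this out should show that, in this exceptional regime, $Q'$ has at most $10m-20$ non-pre-forks; the bound $10(m-2)$, in place of the clean bound $m$ available for forks, is precisely the cost of dropping from the fork axioms the ``abundance''/multiplicity-at-least-two conditions at the point of return, which enlarges the admissible family of shapes. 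Combining the two regimes --- together with a check that they are exhaustive, so that the count is the maximum rather than the sum of the two contributions --- and disposing of the small cases $m \le 2$ by hand, bounds the number of non-pre-forks mutation-equivalent to $Q'$ by $\max\{M, 10m-20\}$.

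The main obstacle is twofold and lives in the last step: first, obtaining the sharp constant $10m-20$ rather than an unspecified finite bound requires a careful, uniform analysis --- across all pre-forks $P$ mutation-equivalent to $Q$ with point of return outside $S$ --- of which non-pre-fork quivers can occur as $P|_S$, for which forks offer a growth/tree shortcut that pre-forks do not; and second, one must show that the ``inherited'' and ``exceptional'' regimes cannot interleave within the mutation class of $Q'$, so that the two bounds combine as a maximum.
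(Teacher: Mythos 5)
There is a genuine gap, and it sits exactly where you flag your ``main obstacle'': the bound $10m-20$ in the key/abundant-acyclic regime is never established, yet it is the actual content of the theorem. The paper obtains it from a substantial structure theory that your outline neither supplies nor cites an adequate substitute for: the classification of full subquivers of a pre-fork (Lemma~\ref{lem-pre-fork-subquivers}: abundant acyclic, a key, a fork, or a pre-fork), the mutation behaviour of keys, wings and tips (Lemmata~\ref{lem-key-mutation}--\ref{lem-wing-mutation-1}), the explicit ``keyring'' subgraph of the labelled mutation graph of a key (Lemma~\ref{lem-key-subgraph}), the pre-fork analogue of Warkentin's tree lemma (Corollary~\ref{cor-pre-fork-connected-subgraph}) which guarantees that once one mutates into the region beyond a pre-fork one never returns to the pre-forkless part, and finally the count $10(m-2)$ for keys (Corollary~\ref{cor-key-count}). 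Without these, your ``exceptional regime'' argument gives no finiteness at all, let alone the stated constant; saying the constant ``is precisely the cost of dropping the abundance conditions'' is a heuristic, not a proof. The same machinery (convexity and uniqueness of the pre-forkless part, Remark~\ref{rmk-pre-forkless-part}) is also what resolves your second worry, that the two regimes combine as a maximum rather than a sum: if $Q'$ is mutation-equivalent to a key or an abundant acyclic quiver its pre-forkless part is exactly that of the key, and otherwise every quiver in its pre-forkless part is a full subquiver of one of the $M$ quivers in the pre-forkless part of $Q$.

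Two of your auxiliary claims are also false as stated. First, a full subquiver of a pre-fork $P$ with vertices $\{r,k,k'\}$ that contains the point of return $r$ need not be a pre-fork: if it omits $k$ or $k'$ it is a subquiver of a fork, hence abundant acyclic or a fork with point of return $r$ (Lemma~\ref{lem-fork-acyclic}), and even if it contains all of $r,k,k'$ it can be acyclic and hence a key (see the proof of Lemma~\ref{lem-pre-fork-subquivers}); so your dichotomy ``lift is a non-pre-fork of $Q$, or a pre-fork with point of return outside $S$'' does not hold. Second, mutating a pre-fork at a vertex $m\notin\{r,k,k'\}$ yields a pre-fork with vertices $\{m,k,k'\}$ --- the point of return moves to the mutated vertex (Lemma~\ref{lem-pre-fork-mutation}) --- while mutating at $k$ or $k'$ yields a wing, not a pre-fork (Lemma~\ref{lem-pre-fork-mutation-k}, Corollary~\ref{cor-wing-pre-fork-key}); so the coherence you invoke for the exceptional lifts is not available in the form you use it. Likewise, deleting the point of return from a pre-fork gives a key (Lemma~\ref{lem-pre-fork-key-subquiver}), and keys on $m$ vertices form an infinite family, so ``confined to a finite family of quivers on $m$ vertices'' is not the right mechanism; finiteness comes from the mutation-theoretic analysis of keys, not from there being finitely many possible restrictions.
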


In Section \ref{sec-preliminaries}, we go over the preliminary material needed, including quivers, forks, and their properties.
Then we go on to prove Theorem~\ref{thm-forkless-quiver-intro} in Section \ref{sec-forkless-part}.
Section \ref{sec-keys-pre-forks} introduces us to pre-forks and quivers we will call keys.
We then dive into Section \ref{sec-fpfp}, which proves Theorem~\ref{thm-pre-forkless-quiver-intro} in an analogous way to the proof of Theorem~\ref{thm-forkless-quiver-intro}.
Additionally, we finish by demonstrating the Venn diagram of hereditary and mutation-invariant properties given by Figure~\ref{fig-venn-diagram}.

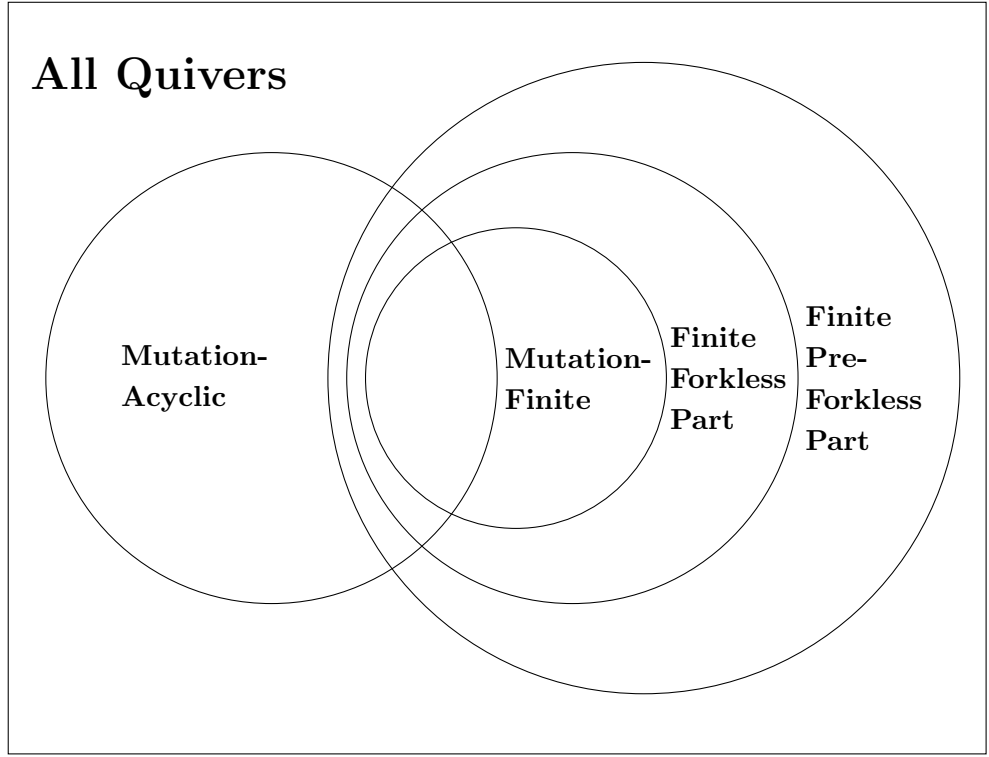
\begin{figure}
    \centering
    \begin{tikzpicture}
    %% You can adjust the opacity here. For venn diagrams it is convenient to have a low opacity so that you can see intersections
    	\begin{scope}
    %% The draw command knows a lot of shapes. To make a rectangle you just need to specify two diagonal corners. Make sure you always have a semicolon at the end of your draw commands, otherwise latex flips out.
        \draw (-5,5) rectangle (8,-5);
    %% Similarly, you can make a circle by specifying the center and then the radius. You can also add a fill color, but if you're printing in black and white you'll probably want to remove that line.
        \draw[draw = black] (-1.5,0) circle (3);
        \draw[draw = black] (1.75,0) circle (2);
        \draw[draw = black] (2.5,0) circle (3);
        \draw[draw = black] (3.45,0) circle (4.2);
        
    %% We can use the node command to label points. If you put your cursor on "LARGE" or "textbf" a box will drop down with size and text style options.
        \node at (-3,4) {\LARGE\textbf{All Quivers}};
        \node[text width=2cm] at (-2.5,0) {\textbf{Mutation-Acyclic}};
        \node[text width=2cm] at (2.6,0) {\textbf{Mutation-Finite}};
        \node[text width=2cm] at (4.8,0) {\textbf{Finite Forkless Part}};
        \node[text width=2cm] at (6.6,0) {\textbf{Finite Pre-Forkless Part}};
        
        \end{scope}
    %% And now you have a venn diagram. Yay!
    %\draw[help lines](-5,5) grid (5,-6);    This line can draw the grid lines to help guide you. I use these when I'm writing the code and then delete this line when I publish the pdf.
    \end{tikzpicture}
    \caption{Venn Diagram of Hereditary and Mutation-Invariant Properties, Represented as Classes of Quivers}
    \label{fig-venn-diagram}
\end{figure}

\subsection{Acknowledgements}

Part of this work was presented at the AMS 2023 Spring Central Sectional meeting.
I would like to thank Eric Bucher, John Machacek, and Nicholas Ovenhouse for inviting me.
Also, I would like to thank Scott Neville for his discussion about this paper and how it relates to his paper with Sergey Fomin \cite{fomin_long_2023}.
Finally, I would like to thank my advisor, Kyungyong Lee, for his guidance, support, and help revising this paper.

\section{Preliminaries} \label{sec-preliminaries}

We begin with some preliminary information.

\begin{Def} \label{def-quivers}
    A quiver $Q$ is defined as a tuple $(Q_0, Q_1, s,t)$, where $Q_0$ is a set of vertices, $Q_1$ is a set of arrows, and maps $s,t: Q_1 \to Q_0$ taking any arrow $a \in Q_1$ to its starting vertex $s(a)$ and its terminal vertex $t(a)$, i.e., $s(a) \to t(a)$ is the arrow $a$ in $Q$.
    We further restrict our definition so that $s(a) \neq t(a)$ (no loops) for all $a \in Q_1$ and that there is no pair of arrows $a,b \in Q_1$ where $s(a) = t(b)$ and $t(a) = s(b)$ (no 2-cycles).
    This naturally forces all arrows between two vertices to point in the same direction.
    For example, the following are two distinct quivers:
    \[\begin{tikzcd}
    i & j 
    \arrow[from=1-1,to=1-2]
    \end{tikzcd}
    \text{ and }\begin{tikzcd}
    i & j 
    \arrow[from=1-2,to=1-1]
    \end{tikzcd}\]
    Additionally, whenever we have multiple arrows between the same two vertices, we write the multiplicity, say $m$, above or below the arrow, i.e,
    \[\begin{tikzcd}
    i & j 
    \arrow[from=1-1,to=1-2, "m"]
    \end{tikzcd}\]
    There are several terms that we will use when referring to quivers.
    
    \begin{itemize}
        \item A vertex that only has arrows leaving (entering) is referred to as a \textit{source} (\textit{sink}).

        \item A quiver is called \textit{acyclic} if there exists no (nonempty) directed path of arrows beginning and ending at the same vertex.

        \item A quiver is said to be \textit{abundant} (or \textit{2-complete} \cite{felikson_acyclic_2018} or to have \textit{large weights} \cite{fomin_long_2023}) if there are at least two arrows between every pair of vertices. 

        \item A quiver is \textit{connected} if there exists a sequence of edges (considered without their directions) connecting any two pairs of vertices.

        \item A quiver $Q = (Q_0, Q_1, s,t)$ is called a \textit{full subquiver} (often called an \textit{induced subquiver}) of a quiver $P = (P_0,P_1,s',t')$ if $Q_0 \subset P_0$, if $Q_1 \subseteq P_1$, if the maps $s$ and $t$ are restrictions of $s'$ and $t'$ to the set $Q_1$, and if $s(a), t(a) \in Q_0$ implies that $a \in Q_1$ for any arrow $a \in P_1$.
        For ease of use, the notation $P \setminus V$ will refer to the full subquiver induced by the vertex set $P_0 \setminus V$ for some set $V$.
        A quiver property is called \textit{hereditary} if it is preserved by restriction to any full subquiver.
    \end{itemize}
    Furthermore, we finish with a small matter of notation.
    Fix a quiver, denoted by a capital letter, say $Q$.
    Then the number of arrows between vertex $i$ and $j$ in $Q$ is denoted by the lowercase letter indexed by the vertices, $q_{ij}$, which is positive if $i \to j$ and negative if $j \to i$.
\end{Def}

\begin{Def} \label{def-mutation}
    \textit{Quiver mutation} is defined for every quiver $Q$ and vertex $v$ in $Q_0$ by the following:
    \begin{itemize}
        \item For every path of the form $a \to v \to b$ in $Q$, add an arrow from $a$ to $b$;
        
        \item Flip the direction of every arrow touching $v$;
        
        \item Finally, if any 2-cycles were formed (paths of length two that begin and end at the same vertex), remove them.
    \end{itemize}
   This process will always mutate a quiver into another quiver, and we denote the mutation of $Q$ at vertex $v$ by $\mu_v(Q)$.
   If we have a sequence of vertices, $\bw = [i_1,i_2,i_3,\dots,i_n]$, then we use 
   $$\mu_\bw(Q) = \mu_{i_n}(\mu_{[i_1,i_2,\dots,i_{n-1}]}(Q))$$
   to denote the result of mutating $Q$ at each vertex $i_j$ in order.
   We call $\bw$ a \textit{mutation sequence}.
   Additionally, we assume that all of our mutation sequences are \textit{reduced}, i.e., we never see mutation at a vertex and then another immediate mutation at that same vertex.
   Mutation is an involution, so two mutations at the same vertex can be safely replaced by not mutating in the first place.
\end{Def}

\begin{Def} \label{def-mutation-class}
    Two quivers are said to be \textit{mutation-equivalent} if there exists a sequence of mutations taking one to the other.
    The collection of all quivers mutation-equivalent to a quiver $Q$ is denoted $[Q]$, and it is called the \textit{labelled mutation class} of $Q$.
    If $[Q]$ is a finite (infinite) set, then $Q$ is said to be \textit{mutation-finite} (\textit{mutation-infinite}).
    If $[Q]$ has an acyclic quiver in it, then $Q$ is said to be \textit{mutation-acyclic}.
    Otherwise, it is said to be \textit{mutation-cyclic}.
    A property that is invariant under mutation, called a \textit{mutation-invariant}, can be considered as a property of the labelled mutation class.
    Additionally, we can construct the \textit{labelled mutation graph} of $Q$ by letting $[Q]$ act as the vertex set and forming an edge between two quivers whenever they are a single mutation apart, where this edge is labelled by the mutation.
\end{Def}

\begin{Rmk} \label{rmk-special-cycles}
    It is well-known that if $i \to j$ is a single arrow in a quiver $Q$, then $P = \mu_{[i,j,i,j,i]}(Q) = \mu_{[j,i,j,i,j]}(Q)$ is a copy of $Q$ found by swapping vertices $i$ and $j$ (for example, see \cite{lee_positivity_2015}).
    Additionally, if there is no arrow connecting $i$ and $j$ in $Q$, then $Q = \mu_{[i,j,i,j]}(Q) = \mu_{[j,i,j,i]}(Q)$ (for example, see \cite{fomin_introduction_2021}).
\end{Rmk}

We now may present the concept of forks and the relevant results of Warkentin's dissertation.

\begin{Def}{\cite[Definition 2.1]{warkentin_exchange_2014}} \label{def-forks}
A \textit{fork} is an abundant quiver $F$, where $F$ is not acyclic and where there exists a vertex $r$, called the point of return, such that
\begin{itemize}
    \item For all $i \in F^{-}(r)$ and $j \in F^{+}(r)$ we have $f_{ji} > f_{ir}$ and $f_{ji} > f_{rj}$, where $F^{-}(r)$ is the set of vertices with arrows pointing towards $r$ and $F^{+}(r)$ is the set of vertices with arrows coming from $r$.
    
    \item The full subquivers induced by $F^{-}(r)$ and $F^{+}(r)$ are acyclic.
\end{itemize}
An example of a fork is given by 
\[\begin{tikzcd}
    & r \\
    i & & j
    \arrow[from=2-1, to=1-2, "3"]
    \arrow[from=1-2, to=2-3, "4"]
    \arrow[from=2-3, to=2-1, "5"]
\end{tikzcd},\]
where $r$ is the point of return.
\end{Def}

\begin{Lem}{\textup{\cite[Lemma 2.4]{warkentin_exchange_2014}}} \label{lem-fork-acyclic}
    Let $F$ be a fork with point of return $r$.
    Then a full subquiver $Q$ of $F$ is either abundant acyclic or a fork with the same point of return $r$.
\end{Lem}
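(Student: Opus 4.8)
The plan is to split on whether the point of return $r$ lies in the vertex set $S$ of the full subquiver $Q$, after recording two facts about $F$. First, if $i \in F^-(r)$ and $j \in F^+(r)$ then $f_{ji} > f_{ir} > 0$, so all arrows of $F$ between $i$ and $j$ point from $j$ to $i$; hence \emph{every} arrow of $F$ between $F^+(r)$ and $F^-(r)$ is directed out of $F^+(r)$. Second, since $F$ is abundant its underlying graph is complete, so every vertex of $F$ other than $r$ lies in $F^-(r) \sqcup F^+(r)$, and, abundance being hereditary, $Q$ is automatically abundant. I will repeatedly use that arrow counts are inherited by full subquivers, i.e.\ $q_{ab} = f_{ab}$ for all $a, b \in S$.

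Suppose first that $r \notin S$. Then every vertex of $Q$ lies in $F^-(r) \cup F^+(r)$, and I claim $Q$ is acyclic, which with abundance gives that $Q$ is abundant acyclic. Indeed, a directed cycle $v_1 \to \cdots \to v_k \to v_1$ in $Q$ cannot lie entirely in $S \cap F^-(r)$ nor entirely in $S \cap F^+(r)$, because the full subquivers of $F$ induced by $F^-(r)$ and by $F^+(r)$ are acyclic and hence so are their full subquivers; so the cycle meets both sets, forcing some consecutive pair $v_\ell \to v_{\ell+1}$ with $v_\ell \in F^-(r)$ and $v_{\ell+1} \in F^+(r)$, contradicting the first fact above.

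Now suppose $r \in S$. If $Q$ happens to be acyclic, then it is abundant acyclic and we are done; so assume $Q$ is not acyclic, and I verify that $Q$ is a fork with point of return $r$. Let $Q^-(r)$ and $Q^+(r)$ denote the vertices of $Q$ with an arrow to, respectively from, $r$; by inheritance of arrow counts these equal $S \cap F^-(r)$ and $S \cap F^+(r)$, and by the second fact they partition $S \setminus \{r\}$. For $i \in Q^-(r)$ and $j \in Q^+(r)$ the inequalities $q_{ji} = f_{ji} > f_{ir} = q_{ir}$ and $q_{ji} = f_{ji} > f_{rj} = q_{rj}$ hold, and the full subquivers of $Q$ induced by $Q^-(r)$ and by $Q^+(r)$ are full subquivers of acyclic quivers, hence acyclic. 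Together with abundance and non-acyclicity of $Q$, this is exactly the definition of a fork with point of return $r$.

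This argument is mostly bookkeeping, and the only step needing a genuine (if brief) idea is the case $r \notin S$: acyclicity of $Q$ is not inherited automatically and must be extracted from the one-directionality of the arrows between $F^+(r)$ and $F^-(r)$ together with the acyclicity of the two induced pieces. It is also worth stressing at the outset that the statement is a true dichotomy rather than ``every full subquiver is a fork'' — a full subquiver containing $r$ may well be acyclic, e.g.\ the full subquiver on $r$ and one other vertex — which is why the $r \in S$ case opens by disposing of the acyclic possibility.
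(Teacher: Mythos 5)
Your proof is correct. The paper itself does not reprove this lemma --- it is quoted directly from Warkentin's dissertation (his Lemma 2.4) --- and your argument is the standard direct verification one would find there: split on whether $r$ belongs to the subquiver, use the fork inequalities to see that all arrows between $F^{+}(r)$ and $F^{-}(r)$ point out of $F^{+}(r)$ (which kills any cycle avoiding $r$), and otherwise check the fork conditions for $Q$ at $r$, with the acyclic subcase correctly set aside since a fork must be non-acyclic.
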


\begin{Lem}{\textup{\cite[Lemma 2.5]{warkentin_exchange_2014}}} \label{lem-fork-mutation}
    Let $F$ be either a fork with point of return $r$ or an abundant acyclic quiver.
    If $k$ is a vertex in $F$ that is not the point of return, not a source, or not a sink, then $\mu_k(F)$ is a fork with point of return $k$.
    Additionally, the total number of arrows contained in $\mu_k(F)$ is strictly greater than the total number of arrows in $F$. 
\end{Lem}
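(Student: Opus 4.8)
The plan is to handle the two cases --- $F$ a fork with point of return $r$, and $F$ abundant acyclic --- in parallel, since the effect of $\mu_k$ is the same in both. First I would record the common setup. Both kinds of quiver are abundant, so every vertex other than $k$ lies in exactly one of $F^-(k)$ or $F^+(k)$, and since $k$ is neither a source nor a sink, both of these sets are nonempty. Mutation at $k$ reverses every arrow incident to $k$ with no cancellation (since $\mu_k$ creates no arrows at $k$), so $\mu_k(F)^-(k)=F^+(k)$ and $\mu_k(F)^+(k)=F^-(k)$; moreover the only arrows $\mu_k$ creates run from $F^-(k)$ to $F^+(k)$, along paths $a\to k\to b$, so the induced subquivers on $F^-(k)$ and on $F^+(k)$ are unchanged by the mutation.

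The key step is the claim that for every $a\in F^-(k)$ and $b\in F^+(k)$ there is an arrow $a\to b$ in $\mu_k(F)$, of multiplicity $(\mu_k F)_{ab}=f_{ab}+f_{ak}f_{kb}\ge 2$. If $F$ is acyclic this is immediate: the path $a\to k\to b$ forces $a\to b$ already in $F$, and $\mu_k$ only enlarges that multiplicity. If $F$ is a fork then, since $k$ is not the point of return, it lies in $F^-(r)$ or $F^+(r)$; say $k\in F^-(r)$ (the case $k\in F^+(r)$ is entirely analogous, with the roles of sinks and sources interchanged). One then checks $F^+(k)=\{r\}\cup\{i\in F^-(r):k\to i\}$ and $F^-(k)=F^+(r)\cup\{i\in F^-(r):i\to k\}$; running through the resulting cases, $a\to b$ already holds in $F$ except when $a\in F^+(r)$ and $b=r$, and there one invokes the defining fork inequality $f_{ak}>f_{ra}$ (from $k\in F^-(r)$, $a\in F^+(r)$) together with $f_{kr}\ge 2$ to see that the $f_{ak}f_{kr}$ newly created arrows $a\to r$ outweigh the $f_{ra}$ old arrows $r\to a$.

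Granting this, I would verify the three defining properties of a fork for $\mu_k(F)$ with point of return $k$. Abundance follows from the claim (for pairs across $F^-(k)$ and $F^+(k)$) together with abundance of $F$ (for all other pairs, whose multiplicities are only reversed or left alone). Non-acyclicity follows from the $3$-cycle $k\to a\to b\to k$, using $k\to a$ and $b\to k$ (reversed arrows at $k$) and $a\to b$ (the claim), for any $a\in F^-(k)$, $b\in F^+(k)$. The induced subquivers on $\mu_k(F)^-(k)=F^+(k)$ and $\mu_k(F)^+(k)=F^-(k)$ agree with the corresponding subquivers of $F$ and are acyclic: in the acyclic case they are full subquivers of an acyclic quiver; in the fork case one of them avoids $r$ and so is abundant acyclic by Lemma~\ref{lem-fork-acyclic}, while the other sits inside $\{r\}\cup F^-(r)$ (or $\{r\}\cup F^+(r)$), which is acyclic because $r$ is a sink (resp.\ source) there. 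Finally, the point-of-return inequalities $(\mu_k F)_{ji}>(\mu_k F)_{ik}$ and $(\mu_k F)_{ji}>(\mu_k F)_{kj}$ for $i\in F^+(k)$, $j\in F^-(k)$ reduce, after substituting $(\mu_k F)_{ik}=f_{ki}$, $(\mu_k F)_{kj}=f_{jk}$, $(\mu_k F)_{ji}=f_{ji}+f_{jk}f_{ki}$, to elementary inequalities: they are clear when $f_{ji}\ge 0$, and when $f_{ji}<0$ one checks that the only way this occurs is $i=r$, which carries the fork inequality $f_{jk}>f_{rj}=|f_{ji}|$, the acyclicity of the subquiver on $F^-(r)$ ruling out the remaining potential bad cases. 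For the arrow count, $\mu_k$ leaves the number of arrows at $k$ and within each of $F^-(k)$, $F^+(k)$ unchanged, while for each pair $a\in F^-(k)$, $b\in F^+(k)$ it turns $|f_{ab}|$ arrows into $f_{ab}+f_{ak}f_{kb}$ arrows, a strict increase since $f_{ak}f_{kb}\ge 4$ while $|f_{ab}|$ either equals $f_{ab}$ or (in the one negative case above) is dominated with room to spare; as there is at least one such pair, the total strictly increases.

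I expect the main obstacle to be the bookkeeping in the fork case: organizing the membership of vertices in $F^{\pm}(k)$ in terms of $F^{\pm}(r)$, and tracking exactly which defining fork inequality of $F$ pushes each newly created multiplicity past its threshold. No single step is hard --- each inequality is elementary and the acyclicity claims are immediate once the vertex sets are written out --- but several cases must be kept straight, and one must check that the argument for $k\in F^+(r)$ really does mirror the one for $k\in F^-(r)$ despite a fork not being symmetric under reversing all arrows.
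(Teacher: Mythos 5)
Your argument is correct: the direct verification of the fork axioms for $\mu_k(F)$ with point of return $k$ --- splitting vertices into $F^{\pm}(k)$, proving the cross-pair arrow claim with the single exceptional pair involving $r$, and then checking abundance, non-acyclicity, acyclicity of the two induced subquivers, the point-of-return inequalities, and the strict arrow-count increase --- all goes through as you describe. Note that the paper itself gives no proof of this lemma (it is quoted from Warkentin's dissertation), and your proof is essentially the same elementary case-by-case verification used there, so there is nothing to flag beyond the bookkeeping you already acknowledge.
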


\begin{Lem}{\textup{\cite[Lemma 2.8]{warkentin_exchange_2014}}} \label{lem-fork-tree}
    Let $\Gamma$ be the labelled mutation graph of a quiver with $n$ vertices and assume that one vertex of $\Gamma$ is a fork $F$ with point of return $r$.
    Then the edge $e$ is a bridge between $F$ and $\mu_r(F)$, and the connected component $\Gamma'$ of $\Gamma \setminus e$ containing $F$ is a $n$-regular tree except at the quiver $F$, where only $n-1$ edges appear and every vertex of the tree is itself a fork.
\end{Lem}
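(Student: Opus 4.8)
The plan is to use the total arrow count $N(Q)$ of a quiver $Q$ as a monovariant, built on one structural observation: a fork is abundant, hence connected, and the fork inequalities (with $f_{vr}\ge 2>0$) force every vertex $v\ne r$ to receive an arrow from each $j\in F^{+}(r)$ while sending one toward $r$ (or the symmetric statement), so a fork has neither a source nor a sink; thus Lemma~\ref{lem-fork-mutation} applies to mutation at \emph{every} non-return vertex. From this I would extract the core lemma: if $G$ is a fork with point of return $p$ and $[k_1,\dots,k_m]$ is a \emph{reduced} mutation sequence with $k_1\ne p$, then, writing $G_0=G$ and $G_i=\mu_{k_i}(G_{i-1})$, every $G_i$ is a fork with point of return $k_i$ and $N(G_0)<N(G_1)<\dots<N(G_m)$; the induction works because reducedness forces $k_i\ne k_{i-1}$, which is the point of return of $G_{i-1}$.

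Next I would prove the bridge claim. Suppose for contradiction that $\mu_r(F)$ lies in the component $\Gamma'$ of $\Gamma\setminus e$ that contains $F$, and pick a simple path $F=G_0,\dots,G_m=\mu_r(F)$ in $\Gamma\setminus e$; a simple path yields a reduced sequence, $m\ge 2$, and $k_1\ne r$ since the $F$-edge labelled $r$ is exactly the deleted edge $e$. By the core lemma every $G_i$ is a fork, $N(F)<N(G_1)<\dots<N(G_m)=N(\mu_r(F))$, and $\mu_r(F)$ has point of return $k_m$. If $k_m=r$ then $\mu_r(G_{m-1})=\mu_r(F)$ forces $G_{m-1}=F$, contradicting simplicity; if $k_m\ne r$ then mutating the fork $\mu_r(F)$ at the non-return vertex $r$ strictly raises $N$ yet yields $F$, so $N(F)>N(\mu_r(F))$, contradicting $N(\mu_r(F))>N(F)$. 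Hence $\mu_r(F)\notin\Gamma'$ and $e$ is a bridge. I expect this to be the main obstacle: the delicate point is that $\mu_r(F)$ is not obviously a fork, so one cannot start the monovariant there --- one must first read forward along the path from $F$ to learn that $\mu_r(F)$ is a fork, and only then reverse the single mutation $\mu_r$.

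Finally I would assemble the tree structure. The same simple-path argument shows every vertex of $\Gamma'$ other than $F$ is a fork, and that the endpoints of each edge of $\Gamma'$ have different values of $N$, since one is the mutation of the other at a non-return vertex. Orienting each edge from smaller to larger $N$, a cycle would have a vertex $Q^{\ast}$ with both cycle-edges pointing in; $Q^{\ast}\ne F$ because the $n-1$ edges of $\Gamma'$ at $F$ all point out (each $\mu_k(F)$ with $k\ne r$ has larger $N$); but a fork $Q^{\ast}$ with point of return $p$ has at most one neighbor of smaller $N$, namely $\mu_p(Q^{\ast})$, since $N(\mu_v(Q^{\ast}))>N(Q^{\ast})$ for every $v\ne p$ --- a contradiction. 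So $\Gamma'$ is a tree. Every quiver in $\Gamma'$ has exactly $n$ distinct neighbors in $\Gamma$ (for $v\ne p$ the $\mu_v$'s are forks with pairwise distinct points of return, and $\mu_p$ differs from each of them by the same $N$-comparison), and all $n$ incident edges lie in $\Gamma'$ except that the single edge $e$ at $F$ leaves it; hence $F$ has degree $n-1$ in $\Gamma'$ and every other vertex has degree $n$ --- exactly the claimed $n$-regular tree, deficient only at $F$, in which every node is a fork.
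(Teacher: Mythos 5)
This lemma is imported from Warkentin's dissertation and the paper gives no proof of it, so there is nothing internal to compare against; judged on its own, your argument is correct, and it is essentially the expected one: the total arrow count $N$ together with Lemma~\ref{lem-fork-mutation} used as a monovariant along reduced mutation sequences, first to show $e$ is a bridge, then to rule out cycles and count degrees. A few spots deserve one more line each. (i) Your observation that a fork has no source and no sink is the right enabling step, but the fork inequalities alone are vacuous if $F^{-}(r)$ or $F^{+}(r)$ is empty; you need to note that emptiness of either set would make $F$ acyclic, contradicting the definition of a fork. (ii) The assertion ``$m\ge 2$'' in the bridge argument is not justified as written, but it is also not needed: in the case $k_m=r$ it follows automatically from $k_1\ne r$, and your $k_m\ne r$ branch already disposes of $m=1$. (iii) The claim that the endpoints of every edge of $\Gamma'$ have distinct values of $N$ has an a priori exception you should rule out explicitly, namely an edge joining two forks at a common point of return (exactly what happens across $e$ itself); the fix is your own simple-path argument: if the label $v$ is the point of return of $Q$ supplied by a path from $F$, then $\mu_v(Q)$ is the penultimate vertex of that path, whose point of return is the previous label $\ne v$ by reducedness, so Lemma~\ref{lem-fork-mutation} applied there gives $N(Q)>N(\mu_v(Q))$. (iv) In the degree count, deducing that the neighbours $\mu_v(Q^{\ast})$, $v\ne p$, are pairwise distinct because they are ``forks with distinct points of return'' silently uses uniqueness of the point of return, which is true (a lemma of Warkentin) but never stated in this paper; either invoke it explicitly or avoid it with the same monovariant: if $\mu_v(Q^{\ast})=\mu_w(Q^{\ast})=P$ with $v\ne w$, $v\ne p$, then $N(P)>N(Q^{\ast})$ while $Q^{\ast}=\mu_w(P)$ with $w$ not the point of return $v$ of $P$ gives $N(Q^{\ast})>N(P)$, a contradiction; and for $Q^{\ast}=F$ the neighbour $\mu_r(F)$ is distinct from the $\mu_v(F)$ simply because it lies outside $\Gamma'$ by the bridge claim. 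With these details filled in, your proof is sound.
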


\begin{Thm}{\textup{\cite[Theorem 3.2]{warkentin_exchange_2014}}} \label{thm-connected-fork}
    A connected quiver is mutation-infinite if and only if it is mutation-equivalent to a fork.
\end{Thm}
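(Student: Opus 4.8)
The plan is to prove the two implications separately; the $\Leftarrow$ direction is elementary, while the $\Rightarrow$ direction is the heart of the matter.

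\emph{($\Leftarrow$)} Suppose $Q$ is mutation-equivalent to a fork $F$. Since $F$ is abundant and not acyclic, while every quiver on at most two vertices is acyclic (having no loops or $2$-cycles, it has no directed cycles at all), $F$ has $n:=|F_0|\ge 3$ vertices. The labelled mutation graph of $Q$ coincides with that of $F$; by Lemma~\ref{lem-fork-tree}, removing one edge from it leaves a component that is a tree in which every vertex has degree $n$ or $n-1$, hence degree $\ge 2$, and such a tree is infinite. Therefore $[Q]=[F]$ is infinite and $Q$ is mutation-infinite. (Equivalently, one may iterate Lemma~\ref{lem-fork-mutation}: in a fork with point of return $r$, not every vertex other than $r$ can be a source or a sink, for otherwise acyclicity of the subquivers on $F^-(r)$ and $F^+(r)$ would make all of $F$ acyclic; picking such a $k$, the quiver $\mu_k(F)$ is again a fork with strictly more arrows, and repeating yields infinitely many quivers in $[Q]$.)

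\emph{($\Rightarrow$)} Assume $Q$ is connected and mutation-infinite. If $|Q_0|\le 2$ then $[Q]$ has at most two elements, contradicting mutation-infiniteness, so set $n:=|Q_0|\ge 3$; I must produce a fork in $[Q]$. For each $B\in\mathbb{Z}_{>0}$ there are only finitely many quivers on $n$ vertices all of whose multiplicities are $\le B$, so the infinite set $[Q]$ must contain, for every $B$, a quiver with some multiplicity exceeding $B$. It therefore suffices to prove the key lemma: there is a threshold $W(n)$ such that every connected quiver on $n$ vertices having an arrow of multiplicity at least $W(n)$ is mutation-equivalent to a fork. Since mutation preserves connectivity, applying this lemma to a suitable member of $[Q]$ then puts a fork into $[Q]$ and finishes the proof.

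To prove the key lemma I would proceed as follows. Let $\{a,b\}$ carry a very large multiplicity in a connected quiver $Q'$ and fix a spanning tree $T$ of its underlying graph. After preliminary mutations to arrange the local orientations so that no cancellation occurs, mutating at a vertex $b$ adjacent to some $c\notin\{a,b\}$ replaces the multiplicity on $\{a,c\}$ by (essentially) the product of the multiplicities on $\{a,b\}$ and $\{b,c\}$, while leaving the large multiplicity on $\{a,b\}$ intact; iterating this along $T$, I would reach a mutation-equivalent quiver $Q''$ that is \emph{abundant} and still carries some very large multiplicity. If $Q''$ is acyclic, then since $n\ge 3$ some vertex (e.g.\ the second in a topological order) is neither a source nor a sink, so Lemma~\ref{lem-fork-mutation} turns $Q''$ into a fork after one more mutation. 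If $Q''$ is not acyclic, then because some multiplicity dominates the rest a point of return can be created by a bounded number of further mutations; for $n=3$ this is the short analysis of mutating a directed $3$-cycle of weights $(x,y,z)$ at the vertex incident to the two arrows whose product dominates, and the general case reduces to such small configurations.

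I expect this key lemma to be the main obstacle: the bookkeeping needed to keep a large multiplicity from being destroyed by $2$-cycle cancellation while the quiver is made abundant, and then forcing the point-of-return inequalities when $Q''$ is not acyclic, is where the real work lies. The $\Leftarrow$ direction, the small-vertex cases, and the pigeonhole step are all routine.
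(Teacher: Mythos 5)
This theorem is not proved in the paper at all: it is quoted verbatim from Warkentin's dissertation (his Theorem 3.2), so your proposal has to stand on its own. Your $\Leftarrow$ direction is correct (either via Lemma~\ref{lem-fork-tree}, or by iterating Lemma~\ref{lem-fork-mutation}, noting that a fork has no sources or sinks at all), and your pigeonhole reduction is sound: mutation preserves connectivity, there are finitely many quivers on $n$ vertices with bounded multiplicities, so a mutation-infinite class contains members with arbitrarily large weights. The problem is that your ``key lemma'' is not a lemma but essentially the whole theorem: with $W(n)=3$ it is equivalent to the hard implication, and your sketch of it has two genuine gaps. First, the spanning-tree step. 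Mutating at $b$ changes \emph{every} pair of neighbours of $b$, the new weight on $\{a,c\}$ is $|q_{ab}q_{bc}\pm q_{ac}|$ with a sign you do not control, and the ``preliminary mutations to arrange the local orientations'' are themselves mutations that alter multiplicities elsewhere and can undo earlier gains; making a connected quiver abundant while provably retaining a large weight requires a careful monotonicity/potential argument that is entirely absent here.

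Second, and more seriously, the endgame. From an abundant non-acyclic quiver with one large weight you assert that ``a point of return can be created by a bounded number of further mutations'' and that ``the general case reduces to such small configurations.'' The fork conditions are global: you need $f_{ji}>f_{ir}$ and $f_{ji}>f_{rj}$ for \emph{all} $i\in F^-(r)$, $j\in F^+(r)$, together with acyclicity of both induced subquivers, and none of this can be certified by inspecting $3$-vertex pieces. The Markov quiver (a $3$-cycle with all weights $2$) already shows that abundant, non-acyclic and connected does not imply mutation-equivalent to a fork, so everything hinges on exactly how the dominating weight sits relative to the cycle structure; nothing in your sketch establishes the required inequalities after your boundedly many mutations, or even that such a bounded sequence exists. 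This is precisely the content of Warkentin's argument (and of the kind of analysis in the Fomin--Neville work cited in the paper), so as written your proposal assumes the theorem at its crux rather than proving it.
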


\begin{Cor} \label{cor-connected-fork}
    A disconnected quiver is not mutation-equivalent to any forks.
\end{Cor}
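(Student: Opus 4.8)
The plan is to isolate a single mutation-invariant observation --- that mutation preserves the partition of the vertex set into connected components --- and then note that a fork simply cannot be disconnected.

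First I would check that, for any quiver $Q$ and vertex $v$, the quivers $Q$ and $\mu_v(Q)$ have exactly the same connected components. By Definition~\ref{def-mutation}, the only arrows whose presence or direction can change under $\mu_v$ are arrows incident to $v$ (these are merely reversed, never deleted) and arrows between two vertices $a,b$ that are both neighbours of $v$ in $Q$ (such an arrow may be created, or cancelled against a $2$-cycle). In the first case the edge $\{a,v\}$ still survives; in the second case, regardless of whether the edge $\{a,b\}$ survives, both $a$ and $b$ remain joined to $v$ and hence to one another. So $\mu_v$ neither splits nor merges a connected component, and $Q$, $\mu_v(Q)$ induce the same partition of the vertices into components. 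Since this holds at every vertex and mutation is an involution, every quiver in $[Q]$ has the same connected components as $Q$; in particular, connectedness is mutation-invariant.

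Next I would recall that a fork $F$ is by definition abundant, so there are at least two arrows between every pair of its vertices; thus its underlying graph is complete. Moreover $F$ is non-acyclic, so it contains a directed cycle, which (there being no loops and no $2$-cycles) has length at least $3$; hence $F$ has at least three vertices and is connected. Combining this with the previous paragraph: if a disconnected quiver $Q$ were mutation-equivalent to a fork $F$, then $F$ would itself be disconnected, a contradiction. Therefore no disconnected quiver is mutation-equivalent to a fork.

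I do not expect a genuine obstacle here; the only step that warrants a little care is the first one --- verifying that mutation at $v$ can neither disconnect a component nor fuse two of them --- and then invoking the involutivity of mutation to promote this into invariance in both directions.
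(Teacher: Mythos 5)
Your proof is correct, and it fills in exactly the argument the paper leaves implicit: the corollary is stated without proof, the intended justification being precisely that mutation only creates or removes arrows among a vertex and its neighbours (hence preserves connected components), while a fork, being abundant, is connected. So your proposal matches the paper's (implicit) approach; the only remark is that connectedness of a fork already follows from abundance alone, so the digression about cycle length is not needed.
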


We shall use all of these results in our proof of the main result.
Before we do so, we must first discuss abundant acyclic quivers. 

\subsection{Some Properties of Abundant Acyclic Quivers} \label{sec-abundant-acyclic}

\begin{Def}{\textup{\cite{bang-jensen_classes_2018}}} \label{def-acyclic-ordering}
    Let $Q$ be a quiver on $n$ vertices.
    Then we say that an ordering $v_1 \prec v_2 \prec \dots \prec v_n$ on the vertices is acyclic if whenever there exists an arrow $v_i \to v_j$ then $v_i \prec v_j$.
\end{Def}

\begin{Prop}{\textup{\cite[Proposition 3.1.1]{bang-jensen_classes_2018}}} \label{prop-acyclic-source-sink}
    Let $Q$ be an acyclic quiver.
    Then $Q$ has at least one source and at least one sink.
\end{Prop}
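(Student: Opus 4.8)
The plan is to produce a source (and dually a sink) by taking a directed path of maximal length. Since $Q_0$ is finite and $Q$ has no directed cycles, no directed path can repeat a vertex, so the length of a directed path is bounded by $|Q_0|$; hence a directed path of maximal length $P : v_1 \to v_2 \to \dots \to v_k$ exists. If $Q$ has no arrows at all, every vertex is simultaneously a source and a sink and there is nothing to prove, so I may assume $k \geq 2$.

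First I would check that $v_1$ is a source. Suppose to the contrary that some arrow $u \to v_1$ exists. If $u$ were one of $v_2, \dots, v_k$, say $u = v_i$, then $v_i \to v_1 \to \dots \to v_i$ would be a directed cycle, contradicting acyclicity (and $u \neq v_1$ by the no-loops convention of Definition~\ref{def-quivers}); so $u$ lies outside $P$, and $u \to v_1 \to \dots \to v_k$ is a strictly longer directed path, contradicting maximality of $P$. Hence $v_1$ receives no arrows and is a source. The dual argument applied to $v_k$ --- any arrow $v_k \to w$ would either close a directed cycle (if $w$ lies on $P$) or extend $P$ (if it does not) --- shows that $v_k$ is a sink.

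I do not expect any real obstacle here: the statement is elementary, and the only mild care needed is the degenerate arrowless case and the use of the no-loops/no-$2$-cycles conventions of Definition~\ref{def-quivers} to rule out the extending vertex $u$ (resp.\ $w$) already lying on $P$. An alternative would be to invoke an acyclic ordering in the sense of Definition~\ref{def-acyclic-ordering} and take its minimal and maximal elements, but establishing that such an ordering exists is essentially the same argument in disguise, so the maximal-path proof is the more direct route.
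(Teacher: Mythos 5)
Your proof is correct. The paper gives no proof of this proposition at all --- it is quoted directly from the cited reference of Bang-Jensen and Gutin --- and your maximal directed path argument (the first vertex of a longest path must be a source, the last a sink, with acyclicity ruling out both extension and closing a cycle) is precisely the standard argument used there, so your approach is essentially the same as the source's.
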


\begin{Prop}{\textup{\cite[Proposition 3.1.2]{bang-jensen_classes_2018}}} \label{prop-acyclic-ordering}
    Let $Q$ be an acyclic quiver.
    Then there exists an acyclic ordering of its vertices.
\end{Prop}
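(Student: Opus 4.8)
The plan is to argue by induction on the number $n$ of vertices of $Q$, peeling off a source at each step via Proposition~\ref{prop-acyclic-source-sink}.

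For the base case $n = 1$ there is a single vertex and the trivial ordering is vacuously acyclic. For the inductive step, assume the statement holds for every acyclic quiver on $n-1$ vertices and let $Q$ be acyclic on $n$ vertices. By Proposition~\ref{prop-acyclic-source-sink}, $Q$ has a source; call it $v_1$. First I would observe that the full subquiver $Q' = Q \setminus \{v_1\}$ is again acyclic: any nonempty directed path in $Q'$ beginning and ending at the same vertex is also such a path in $Q$, contradicting the acyclicity of $Q$. Hence the inductive hypothesis applies to $Q'$ and produces an acyclic ordering $v_2 \prec v_3 \prec \cdots \prec v_n$ of its vertices.

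I would then extend this to the ordering $v_1 \prec v_2 \prec \cdots \prec v_n$ of $Q_0$ and check the condition of Definition~\ref{def-acyclic-ordering}. Let $v_i \to v_j$ be an arrow of $Q$. If $i, j \geq 2$, the arrow also lies in $Q'$, so $v_i \prec v_j$ by the chosen ordering of $Q'$. If $i = 1$, then $v_1 \prec v_j$ holds automatically because $v_1$ is first. The case $j = 1$ is impossible, since it would be an arrow into the source $v_1$, which by definition has only arrows leaving. Thus every arrow respects the ordering, so it is acyclic, completing the induction.

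There is essentially no serious obstacle here; the only points deserving a word of care are that passing to a full subquiver cannot create a new directed cycle (so acyclicity is inherited) and that a source genuinely has no incoming arrow (so that prepending it to the ordering is safe). One could equally run the symmetric argument, repeatedly removing a sink and building the ordering from the top down.
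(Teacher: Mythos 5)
Your proof is correct: the paper does not prove this proposition itself but cites it from Bang-Jensen--Gutin, and your source-removal induction (using Proposition~\ref{prop-acyclic-source-sink} and the fact that full subquivers of acyclic quivers are acyclic) is exactly the standard argument given there.
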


Since abundant acyclic quivers have an arrow between every pair of vertices, the acyclic ordering produced is unique.
This gives the following corollary.

\begin{Cor} \label{cor-acyclic-ordering}
    Let $Q$ be an abundant acyclic quiver on $n$ vertices.
    Then there exists a unique acyclic ordering of its vertices.
    If $v_1 \prec v_2 \prec \dots \prec v_n$ is the acyclic ordering, then $v_1$ is the only source and $v_n$ is the only sink.
\end{Cor}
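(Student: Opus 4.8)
The plan is to bootstrap off the two cited propositions and exploit abundance to upgrade ``an acyclic ordering exists'' to ``exactly one does.'' Proposition~\ref{prop-acyclic-ordering} already hands us an acyclic ordering $v_1 \prec v_2 \prec \dots \prec v_n$, so only uniqueness and the source/sink statement remain to be checked.

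For uniqueness, the key point is that in an abundant quiver every pair of distinct vertices $i,j$ is joined by at least one arrow, and by the no-$2$-cycle convention all such arrows point the same way. So if $i \to j$ in $Q$, then every acyclic ordering must place $i$ before $j$; conversely if $j \to i$ it must place $j$ before $i$. Thus the relative position of any two vertices in an acyclic ordering is completely dictated by $Q$, which forces the ordering to be unique. (Equivalently: the relation ``$i$ precedes $j$'' must agree with the arrow direction on every pair, so it is a total order already determined by $Q$.)

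For the source/sink claim, consider $v_1$. For any other vertex $v_j$ we have $v_1 \prec v_j$, and since $v_1$ and $v_j$ are joined by an arrow, acyclicity of the ordering forces that arrow to be $v_1 \to v_j$. Hence every arrow incident to $v_1$ leaves it, so $v_1$ is a source; symmetrically $v_n$ is a sink. To see these are the \emph{only} source and sink, note that Proposition~\ref{prop-acyclic-source-sink} guarantees at least one source exists, and if some $v_i$ with $i \geq 2$ were a source then the arrow between $v_1$ and $v_i$ could not point into $v_i$, so it would be $v_i \to v_1$, contradicting $v_1 \prec v_i$; thus $v_1$ is the unique source, and dually $v_n$ is the unique sink.

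There is essentially no hard step here; the only thing to be careful about is invoking abundance (an arrow between every pair) together with the no-$2$-cycle restriction, so that ``there is an arrow'' immediately yields ``its direction is determined.'' That is precisely what collapses all ambiguity in the ordering and makes the argument go through in a single line for each claim.
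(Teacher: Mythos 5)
Your proof is correct and follows essentially the same route as the paper, which justifies the corollary with exactly the observation you elaborate: abundance forces an arrow (of determined direction) between every pair of vertices, so the acyclic ordering from Proposition~\ref{prop-acyclic-ordering} is unique and $v_1$, $v_n$ are the unique source and sink. Your write-up simply spells out the details the paper leaves implicit; no gaps.
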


This corollary will allow us to prove a useful result on the number of distinct abundant acyclic quivers in the labelled mutation class of an abundant acyclic quiver.
The lemma may be well known in another form to others, but we include the proof for the sake of completeness.

\begin{Lem} \label{lem-abundant-acyclic-count}
    Let $Q$ be an abundant acyclic quiver on $n$ vertices.
    Then the labelled mutation class of $Q$ contains exactly $n$ non-forks.
    Additionally, mutating $Q$ at sources (or sinks) repeatedly produces a $n$-cycle in the labelled mutation graph of $Q$.
\end{Lem}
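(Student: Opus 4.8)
The plan is to exploit the uniqueness of the acyclic ordering (Corollary 3.12) together with Lemma 2.5 to pin down exactly which quivers in $[Q]$ fail to be forks. First I would show that any non-fork in $[Q]$ is abundant acyclic. Indeed, by Theorem 3.2 a connected mutation-infinite quiver is mutation-equivalent to a fork, so every quiver in $[Q]$ is either a fork or, as we will need to check, abundant acyclic; more directly, start from $Q$ and mutate. By Lemma 2.5, mutating an abundant acyclic quiver (or a fork) at any vertex that is \emph{not} a source, sink, or point of return yields a fork, and once we reach a fork Lemma 2.8 tells us the mutation graph past that fork is a tree of forks — so the only way to stay among non-forks is to mutate only at sources and sinks of abundant acyclic quivers. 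Hence the non-forks in $[Q]$ are exactly the abundant acyclic quivers obtainable from $Q$ by a sequence of source/sink mutations, and I must count these.

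Next I would analyze the effect of a source or sink mutation on the acyclic ordering. If $Q$ has unique acyclic ordering $v_1 \prec v_2 \prec \cdots \prec v_n$, then $v_1$ is the unique source and $v_n$ the unique sink (Corollary 3.12). Mutating at the source $v_1$: every arrow out of $v_1$ reverses, $v_1$ becomes a sink, and since $Q$ is abundant, for each path $v_1 \to a \to b$ we already have an arrow $a\to b$, so the only change is reversing the arrows at $v_1$; the result is again abundant, again acyclic, with new ordering $v_2 \prec v_3 \prec \cdots \prec v_n \prec v_1$ — a cyclic shift. (One should check abundance is preserved: the arrows from $v_1$ into the rest only add to existing multiplicities, so no pair drops below $2$, and no $2$-cycles survive precisely because the old arrows $a\to b$ dominated.) Similarly, mutating at the sink $v_n$ produces the ordering $v_n \prec v_1 \prec \cdots \prec v_{n-1}$, the inverse cyclic shift. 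Crucially, in an abundant acyclic quiver the source and sink are the \emph{only} vertices eligible for a non-fork-producing mutation, and each such mutation just rotates the acyclic ordering.

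From this the count follows: starting at $Q$ with ordering $(v_1,\dots,v_n)$, the reachable abundant acyclic quivers have orderings exactly the $n$ cyclic rotations of $(v_1,\dots,v_n)$, because source-mutation and sink-mutation are inverse rotations and generate the cyclic group of order $n$ acting on the ordering; moreover an abundant acyclic quiver is determined by its (unique) acyclic ordering together with the underlying multiset of multiplicities, and the multiplicities are just permuted along with the labels, so distinct rotations give distinct labelled quivers. Hence there are exactly $n$ non-forks in $[Q]$. For the last sentence: repeatedly mutating at the source sends ordering $(v_1,\dots,v_n)\mapsto(v_2,\dots,v_n,v_1)\mapsto\cdots$ and returns to $(v_1,\dots,v_n)$ after $n$ steps, giving an $n$-cycle in the labelled mutation graph (and dually for sinks); this also re-confirms, via Remark 2.6 on the $[i,j,i,j]$-type relations, that no shorter cycle occurs.

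The main obstacle I anticipate is the bookkeeping in the second paragraph: verifying cleanly that a source (resp.\ sink) mutation of an abundant acyclic quiver is again abundant acyclic with a cyclically-shifted ordering, and — more subtly — arguing that these are the \emph{only} mutations that keep us out of the fork regime, so that we do not miss any non-forks or overcount. Lemma 2.5 does most of this work, but one must be careful that "not a source, not a sink" is genuinely the complement we want (the statement of Lemma 2.5 is phrased with "or", so a small reading/translation step is needed), and that once a fork appears Lemma 2.8's tree structure forbids returning to a non-fork, so the non-fork locus is exactly the single $n$-cycle we have described.
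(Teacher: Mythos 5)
Your proposal is correct and takes essentially the same route as the paper: Lemmata \ref{lem-fork-mutation} and \ref{lem-fork-tree} confine the non-forks to what is reachable by source/sink mutations, and the unique acyclic ordering of Corollary \ref{cor-acyclic-ordering} shows each such mutation cyclically rotates the ordering, producing exactly the $n$-cycle of abundant acyclic quivers. The bookkeeping you flag is even easier than you anticipate, since a source has no incoming arrows, so mutating there adds no arrows at all and merely reverses the arrows incident to it, making preservation of abundance and acyclicity immediate.
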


\begin{proof}
    As $Q$ is an abundant acyclic quiver, mutating at any vertex other than a source or a sink will produce a fork by Lemma \ref{lem-fork-mutation}.
    Further mutation will only produce forks by Lemma \ref{lem-fork-tree}.
    Let $v_1 \prec v_2 \prec \dots \prec v_n$ be an acyclic ordering created from $Q$ by Proposition \ref{prop-acyclic-ordering}.
    Then Corollary \ref{cor-acyclic-ordering} tells us that the acyclic ordering is unique, that $v_1$ is the only source, and that $v_n$ is the only sink.
    Then mutation at $v_1$ will produce another abundant acyclic quiver $Q' = \mu_{v_1}(Q)$.
    This forces the acyclic ordering produced from $Q'$ to be $v_2 \prec \dots \prec v_n \prec v_1$, where $v_2$ is now the only source and $v_1$ the only sink.
    Note that mutating at the sink would only produce $Q$ and mutating at any vertex other than a source or a sink would produce a fork.
    We may then repeat the process of mutating at sources a further $n-1$ times, arriving back at our original quiver $Q$.
    If we were instead to mutate at sinks only, we would traverse the $n$-cycle formed in the labelled mutation graph in the reverse direction as to mutating at sources only.
    Therefore, the labelled mutation class of $Q$ contains exactly $n$ non-forks.
\end{proof}

If we were to look at the labelled mutation graph of an abundant acyclic quiver on $n$ vertices, then it would look something like the following:
\[\begin{tikzcd}[cramped]
&  Q^{(0)} & & Q^{(n-1)} \\
Q^{(1)} & & & & Q^{(n-3)}\\
& &  Q^{(i)}
\arrow[from=1-2, to=1-4, no head]
\arrow[from=1-2, to=2-1, no head]
\arrow[from=1-4, to=2-5, no head]
\arrow[from=2-1, to=3-3, no head, dashed]
\arrow[from=3-3, to=2-5, no head, dashed]
\end{tikzcd},\]
where each of the $Q^{(i)}$ is an abundant acyclic quiver, each of the solid edges represents mutation at a source or a sink, and mutating anywhere else produces a fork.

\section{Finite Forkless Part} \label{sec-forkless-part}

We begin with our main definitions.

\begin{Def} \label{def-forkless-part}
    Let $Q$ be a quiver.
    Then we define the \textit{forkless part} of $Q$ as the induced subgraph of the labelled mutation graph of $Q$ that contains all non-forks mutation-equivalent to $Q$ \cite{warkentin_exchange_2014}.
    If the forkless part of $Q$ is a finite set, we say that $Q$ has a \textit{finite forkless part}.
    Furthermore, if the forkless part has $m$ quivers in it, we say that $Q$ has a $m$ forkless part.
\end{Def}

\begin{Rmk} \label{rmk-forkless}
    Note that we say \textit{the} forkless part of the labelled mutation graph of $Q$.
    Lemma \ref{lem-fork-tree} demonstrates that only one such subgraph will exist and that this subgraph is also a convex subgraph, as any path between quivers in the forkless part need not pass through a fork.
\end{Rmk}

Our Lemma \ref{lem-abundant-acyclic-count} can be restated as the following corollary.

\begin{Cor} \label{cor-abundant-acyclic}
    Let $Q$ be an abundant acyclic quiver on $n$ vertices.
    Then $Q$ has a $n$ forkless part.
\end{Cor}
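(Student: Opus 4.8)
The plan is to observe that this corollary is a direct translation of Lemma~\ref{lem-abundant-acyclic-count} into the terminology of Definition~\ref{def-forkless-part}, so essentially no new work is required. First I would recall that Lemma~\ref{lem-abundant-acyclic-count} establishes that the labelled mutation class $[Q]$ of an abundant acyclic quiver on $n$ vertices contains exactly $n$ non-forks. Next I would invoke Definition~\ref{def-forkless-part}, which defines the forkless part of $Q$ as the induced subgraph of the labelled mutation graph whose vertex set is precisely the set of non-forks mutation-equivalent to $Q$. Combining these two facts, the forkless part of $Q$ has exactly $n$ vertices, and hence by the final clause of Definition~\ref{def-forkless-part}, $Q$ has an $n$ forkless part.

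The only point deserving a sentence of care is that ``the forkless part'' is well-defined as a single subgraph; this is exactly the content of Remark~\ref{rmk-forkless}, which follows from Lemma~\ref{lem-fork-tree}, so I would cite it rather than reprove it. I do not anticipate any genuine obstacle here — the statement is a corollary in the literal sense, and the entire argument is a two- or three-line unwinding of definitions. If one wanted to add color, one could also note that Lemma~\ref{lem-abundant-acyclic-count} additionally records that these $n$ non-forks form an $n$-cycle in the labelled mutation graph obtained by mutating repeatedly at sources (equivalently sinks), so the forkless part of an abundant acyclic quiver is in fact an $n$-cycle; but this is not needed for the statement as given.
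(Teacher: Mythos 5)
Your proposal is correct and matches the paper exactly: the paper presents this corollary as a direct restatement of Lemma~\ref{lem-abundant-acyclic-count} in the language of Definition~\ref{def-forkless-part}, which is precisely your unwinding (including the appeal to Remark~\ref{rmk-forkless} for well-definedness of the forkless part). No further comment is needed.
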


Another corollary follows for forks in general, as all subquivers of forks are either abundant acyclic or are forks that share a point of return by Lemma \ref{lem-fork-acyclic}.

\begin{Cor} \label{lem-fork-subquiver}
    Let $Q$ be a fork with point of return $r$.
    Then any full subquiver $Q'$ --- necessarily a connected subquiver --- that does not have $r$ as a vertex has a finite forkless part.
    Indeed, the subquiver $Q'$ is mutation-equivalent to exactly $m$ distinct abundant acyclic quivers, where $m$ is the number of vertices of $Q'$.
\end{Cor}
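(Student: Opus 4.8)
The plan is to reduce the statement entirely to Lemma~\ref{lem-fork-acyclic} and Corollary~\ref{cor-abundant-acyclic}. First I would apply Lemma~\ref{lem-fork-acyclic} to the full subquiver $Q'$ of the fork $Q$: it tells us that $Q'$ is either abundant acyclic or a fork with the same point of return $r$. Since $r$ is by hypothesis not a vertex of $Q'$, the second alternative is impossible --- a fork with point of return $r$ has $r$ among its vertices by Definition~\ref{def-forks} --- so $Q'$ must be abundant acyclic. Along the way this also justifies the parenthetical remark that $Q'$ is connected: every fork is abundant, abundance is obviously hereditary, so $Q'$ is abundant, and an abundant quiver (having at least one arrow between every pair of vertices) is connected; this is vacuous when $Q'$ has at most one vertex.

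Next, writing $m$ for the number of vertices of $Q'$, I would invoke Corollary~\ref{cor-abundant-acyclic} (equivalently Lemma~\ref{lem-abundant-acyclic-count}): an abundant acyclic quiver on $m$ vertices has an $m$ forkless part, i.e. its labelled mutation class contains exactly $m$ non-forks. Moreover the proof of Lemma~\ref{lem-abundant-acyclic-count} exhibits each of these $m$ non-forks as an abundant acyclic quiver obtained by repeatedly mutating at sources (equivalently at sinks). This yields both claims at once: the forkless part of $Q'$ is finite, of size exactly $m$, and $Q'$ is mutation-equivalent to exactly $m$ distinct abundant acyclic quivers.

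I do not expect a substantive obstacle; the corollary is essentially an assembly of the two cited results, with the one hypothesis ``$r \notin Q'_0$'' used precisely to kill the fork alternative in Lemma~\ref{lem-fork-acyclic}. The only point needing mild care is the behavior for small $Q'$ (no vertices, or a single vertex), where abundance holds vacuously and Lemma~\ref{lem-abundant-acyclic-count} still applies --- mutating a one-vertex quiver returns itself, giving a $1$ forkless part --- so the count $m$ remains correct in the degenerate cases.
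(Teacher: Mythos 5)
Your proposal is correct and follows exactly the paper's (implicit) argument: the sentence preceding the corollary invokes Lemma~\ref{lem-fork-acyclic} to conclude that a full subquiver avoiding $r$ must be abundant acyclic, and then Corollary~\ref{cor-abundant-acyclic} (i.e.\ Lemma~\ref{lem-abundant-acyclic-count}) gives the exact count of $m$ abundant acyclic non-forks. Your additional remarks on connectedness via abundance and on degenerate small cases are fine but not points where the paper does anything different.
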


Our main theorem may then be proved.

\begin{Thm} \label{thm-forkless-quiver}
    Let $Q$ be a quiver that has a finite forkless part.
    Then any (possibly disconnected) full subquiver $Q'$ also has a finite forkless part.
    Furthermore, if $M$ is the number of non-forks mutation-equivalent to $Q$ and $m$ is the number of vertices in $Q'$, then the number of non-forks mutation-equivalent to $Q'$ is bounded by $\max\{M, m\}$.
\end{Thm}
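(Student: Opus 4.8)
The plan is to reduce everything to one elementary bookkeeping fact: mutation commutes with passage to a full subquiver, provided one mutates at a vertex of that subquiver. Write $S=Q'_0$ for the vertex set of $Q'$, so $|S|=m$. For any quiver $P$ and any $v\in S$, comparing the mutation rule applied to $P$ with the mutation rule applied to $P|_S$ shows $\mu_v(P)|_S=\mu_v(P|_S)$: the arrows of $\mu_v(P)$ lying inside $S$ that differ from those of $P|_S$ arise only from paths $a\to v\to b$ with $a,b,v\in S$, and by fullness these paths, together with their multiplicities, are computed identically in $P$ and in $P|_S$ (and the $2$-cycle cancellations are then forced the same way). Iterating along any mutation sequence $\bw$ supported on $S$ yields $\mu_\bw(Q)|_S=\mu_\bw(Q')$. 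Since every quiver mutation-equivalent to $Q'$ is $\mu_\bw(Q')$ for some such $\bw$, I would conclude that every quiver mutation-equivalent to $Q'$ is the restriction to $S$ of some quiver mutation-equivalent to $Q$.

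Next I would split on whether the mutation class of $Q'$ contains an abundant acyclic quiver. If it does, that quiver has $m$ vertices, so by Lemma~\ref{lem-abundant-acyclic-count} its mutation class --- which is exactly that of $Q'$ --- has precisely $m$ non-forks; hence the count is $m\le\max\{M,m\}$ and this case is done (it also forces $Q'$ to be connected). If it does not, let $R'$ be any non-fork mutation-equivalent to $Q'$ and write $R'=R|_S$ with $R$ mutation-equivalent to $Q$, using the previous paragraph. Were $R$ a fork, Lemma~\ref{lem-fork-acyclic} would make its full subquiver $R'$ either abundant acyclic (impossible in this case, since $R'$ lies in the mutation class of $Q'$) or itself a fork (impossible, since $R'$ is a non-fork); hence $R$ is one of the $M$ non-forks mutation-equivalent to $Q$. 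Since $R\mapsto R|_S$ is a function, there are at most $M$ possibilities for such $R'$, giving the bound $M\le\max\{M,m\}$. In either case the forkless part of $Q'$ is finite, with at most $\max\{M,m\}$ elements.

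I expect the main obstacle to be the commuting identity $\mu_\bw(Q)|_S=\mu_\bw(Q')$ of the first paragraph: it is elementary but needs a careful check that arrow multiplicities and $2$-cycle cancellation survive restriction to $S$. Everything after that is a short case analysis powered by Warkentin's Lemma~\ref{lem-fork-acyclic} together with Lemma~\ref{lem-abundant-acyclic-count}; as a bonus it recovers Corollary~\ref{lem-fork-subquiver} and dispatches the disconnected case uniformly, since by Corollary~\ref{cor-connected-fork} a disconnected $Q'$ has no forks in its mutation class and therefore automatically falls into the second case (and is in fact mutation-finite with at most $M$ elements).
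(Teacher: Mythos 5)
Your proposal is correct and follows essentially the same route as the paper: both rest on the observation that mutation commutes with restriction to a full subquiver (so every quiver in $[Q']$ is a full subquiver of a quiver in $[Q]$), followed by a case analysis using Lemma~\ref{lem-fork-acyclic} together with Lemma~\ref{lem-abundant-acyclic-count}/Corollary~\ref{cor-abundant-acyclic} to obtain the bounds $m$ and $M$. Your only deviations are cosmetic --- you make the commuting identity $\mu_\bw(Q)|_S=\mu_\bw(Q')$ explicit and you split on whether $[Q']$ contains an abundant acyclic quiver rather than on whether $Q'$ is mutation-equivalent to a subquiver of a fork --- which slightly streamlines, but does not change, the paper's argument.
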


\begin{proof}
    As $Q'$ is a full subquiver of $Q$, every quiver that is mutation-equivalent to $Q'$ is a full subquiver of a quiver mutation-equivalent to $Q$.
    If $Q'$ is not mutation-equivalent to a full subquiver of a fork mutation-equivalent to $Q$, then $Q'$ is only mutation-equivalent to subquivers of non-forks mutation-equivalent to $Q$.
    Then $Q'$ must have a finite forkless part, as there are only $M$ many such quivers that are mutation-equivalent to $Q$.

    We may then assume that $Q'$ is mutation-equivalent to a full subquiver of a fork mutation-equivalent to $Q$.
    Then $Q'$ is mutation-equivalent to either an abundant acyclic quiver or a fork by Lemma \ref{lem-fork-subquiver}.
    If $Q'$ is mutation-equivalent to an abundant acyclic quiver, then it has finite forkless part with $m$ non-forks in it by Corollary \ref{cor-abundant-acyclic}.
    We assume then that $Q'$ is not mutation-equivalent to any abundant acyclic quiver.
    If $Q'$ is mutation-equivalent only to subquivers of a fork mutation-equivalent to $Q$ that are themselves forks, then this forces $Q'$ to be mutation-equivalent to at most $M$ non-forks, as there are only $M$ possible non-forks that it can be mutation-equivalent to a subquiver of.
    Therefore, any full subquiver $Q'$ of a quiver that has a finite forkless part also has a finite forkless part with the number of non-forks bounded by $\max\{M,m\}$.
\end{proof}

Since having a finite forkless part is a property of the labelled mutation class, we have the following corollary as the main result of our paper.

\begin{Cor} \label{cor-forkless-hereditary}
    Having a finite forkless part is a hereditary and mutation-invariant property.
\end{Cor}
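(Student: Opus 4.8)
The plan is to separate the statement into its two halves and settle each with what is already in hand. For mutation-invariance, I would note that the forkless part of $Q$ was defined purely in terms of the labelled mutation graph of $Q$ together with the fork/non-fork dichotomy on its vertices. Two mutation-equivalent quivers generate the same labelled mutation class, hence have isomorphic labelled mutation graphs, and the property of being a fork is intrinsic to each individual quiver, so it is respected by that isomorphism. Consequently the cardinality of the forkless part depends only on $[Q]$. In particular, if $Q$ has a finite forkless part and $Q$ is mutation-equivalent to $Q''$, then $Q''$ has a finite forkless part of the same size, which is exactly what mutation-invariance requires.

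For the hereditary half, there is nothing left to prove: I would simply invoke Theorem~\ref{thm-forkless-quiver}, which says that any (possibly disconnected) full subquiver $Q'$ of a quiver $Q$ with finite forkless part again has a finite forkless part — indeed with the explicit bound $\max\{M,m\}$ on the number of non-forks mutation-equivalent to $Q'$. Combining this with the previous paragraph yields the corollary.

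The one point I would be careful to flag — essentially the content of Remark~\ref{rmk-forkless} — is the well-definedness of ``\emph{the}'' forkless part: one must know that the non-forks in the labelled mutation graph span a single (in fact convex) subgraph, so that ``finite forkless part'' is an unambiguous condition rather than depending on a choice of component. This is supplied by Lemma~\ref{lem-fork-tree}: every fork sits at the root of a tree dangling off the remainder of the graph, so no path between two non-forks ever needs to traverse a fork, and hence the non-forks form one convex subgraph. I do not expect any real obstacle here; once this observation is recorded, the corollary is a formal consequence of Theorem~\ref{thm-forkless-quiver} and Definition~\ref{def-forkless-part}.
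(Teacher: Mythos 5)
Your proposal is correct and follows essentially the same route as the paper: the paper derives the corollary immediately from Theorem~\ref{thm-forkless-quiver} together with the observation that having a finite forkless part is a property of the labelled mutation class, which is exactly your two-part argument. Your extra remark on well-definedness via Lemma~\ref{lem-fork-tree} matches the paper's Remark~\ref{rmk-forkless} and is a reasonable point to record, but it adds nothing beyond what the paper already notes.
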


Additionally, we have a nice consequence of Theorem \ref{thm-forkless-quiver} concerning disconnected full subquivers, as disconnected quivers are not mutation-equivalent to forks.

\begin{Cor} \label{cor-disconnected-finite}
    Let $Q$ be a quiver with a finite forkless part.
    If $Q'$ is a disconnected full subquiver of $Q$, then $Q'$ must be mutation-finite.
\end{Cor}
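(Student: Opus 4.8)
The plan is to deduce this directly from Theorem~\ref{thm-forkless-quiver} and Corollary~\ref{cor-connected-fork}, with no new computation required. The key observation is that for a disconnected quiver the \emph{forkless part} is not a proper subgraph of the labelled mutation graph --- it is the whole thing.

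First I would invoke Theorem~\ref{thm-forkless-quiver}: since $Q$ has a finite forkless part and $Q'$ is a full subquiver of $Q$, the subquiver $Q'$ also has a finite forkless part, so only finitely many quivers mutation-equivalent to $Q'$ are non-forks. Second, I would use the hypothesis that $Q'$ is disconnected together with Corollary~\ref{cor-connected-fork}: a disconnected quiver is never mutation-equivalent to a fork. (The underlying reason, worth recalling in the write-up, is that mutation at a vertex $v$ only ever adds arrows between vertices lying in the same connected component as $v$ and only flips arrows incident to $v$, so the partition of the vertex set into connected components is preserved under mutation; since every fork is abundant and hence connected, a quiver having a disconnected member in its mutation class can contain no fork in that class.) Consequently every quiver in the labelled mutation class $[Q']$ is a non-fork, so the forkless part of $Q'$ coincides with the entire labelled mutation graph of $Q'$.

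Putting the two steps together: the forkless part of $Q'$ is both finite (by the first step) and equal to all of $[Q']$ (by the second step), so $[Q']$ is finite, which is precisely the statement that $Q'$ is mutation-finite. I do not expect any genuine obstacle here --- it is a two-line deduction --- and the only point requiring a little care is the identification of the forkless part with the full mutation class in the disconnected case, which is exactly the content of Corollary~\ref{cor-connected-fork}.
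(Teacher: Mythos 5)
Your proposal is correct and matches the paper's reasoning: the paper derives this corollary in exactly the same way, combining Theorem~\ref{thm-forkless-quiver} (so $Q'$ has a finite forkless part) with Corollary~\ref{cor-connected-fork} (disconnected quivers are never mutation-equivalent to forks), so the forkless part of $Q'$ is its entire labelled mutation class and hence $Q'$ is mutation-finite. Your added remark that mutation preserves connected components, which underlies Corollary~\ref{cor-connected-fork}, is a fine piece of extra justification but not a departure from the paper's route.
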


There is no lower bound on the number of non-forks in the forkless part of a quiver \cite[Corollary 4.4]{warkentin_exchange_2014}.
For example, the quiver
\[\begin{tikzcd}
    & j \\
    i & & k
    \arrow[from=2-1, to=1-2, "3"]
    \arrow[from=1-2, to=2-3, "4"]
    \arrow[from=2-3, to=2-1, "5"]
\end{tikzcd}\]
is mutation-equivalent to zero non-forks (mutating at $j$ produces a fork with point of return $j$), meaning that it has 0 forkless part.
Furthermore, it is mutation-cyclic by the same reasoning.

An upper bound on the number of non-forks in the finite forkless part of a quiver does not exist, at least in the case of quivers with four vertices.
For example, the quiver
\[\begin{tikzcd}
& j\\
& \ell\\
i & & k
\arrow[from=3-3, to=1-2, "10"']
\arrow[from=3-1, to=3-3, "1366"']
\arrow[from=1-2, to=3-1, "138"']
\arrow[from=1-2, to=2-2, "2"]
\arrow[from=3-3, to=2-2, "2"]
\arrow[from=3-1, to=2-2, "2"']
\end{tikzcd}\]
has a finite forkless part with 14 elements. 
Other quivers with a large forkless part can be constructed in a similar manner by attaching a sink in the manner above to a quiver mutation-equivalent to an abundant acyclic quiver, as detailed in Sergey Fomin and Scott Neville's upcoming paper \cite{fomin_long_2023}. 

Additionally, if we instead attached a sink to a mutation-cyclic quiver on 3 vertices, then we arrive at a quiver like
\[\begin{tikzcd}
& j\\
& \ell\\
i & & k
\arrow[from=1-2, to=3-3, "4"]
\arrow[from=3-3, to=3-1, "5"]
\arrow[from=3-1, to=1-2, "3"]
\arrow[from=1-2, to=2-2, "2"]
\arrow[from=3-3, to=2-2, "2"]
\arrow[from=3-1, to=2-2, "2"']
\end{tikzcd}\]
that is mutation-equivalent to exactly two non-forks (mutate at $\ell$ to produce the only other non-fork in its labelled mutation class).
Furthermore, every mutation-finite quiver is readily seen to have a finite forkless part, and every abundant acyclic quiver is known to have a finite forkless part.
This shows that the class of quivers with a finite forkless part is broad.

However, the class of quivers with an infinite forkless part is equally broad.
For example, the quiver 
\[\begin{tikzcd}
j & k \\
i & \ell 
\arrow[from=2-1, to=1-1, "2"]
\arrow[from=1-1, to=1-2, "2"]
\arrow[from=1-2, to=2-2, "2"]
\end{tikzcd}\]
is mutation-equivalent to an infinite number of non-forks (alternate mutations at vertices $k$ and $\ell$), but it is clearly mutation-acyclic. 
In a forthcoming paper, we will provide a generalization of this example. 

In general, quivers can have disconnected full subquivers, and this brings many examples of quivers without a finite forkless part.
For example, the quiver
\[\begin{tikzcd}
& m \\
& j\\
& \ell\\
i & & k
\arrow[from=1-2, to=2-2, "2"]
\arrow[from=2-2, to=3-2, "2"]
\arrow[from=2-2, to=4-3, "3"]
\arrow[from=3-2, to=4-3, "3"']
\arrow[from=4-1, to=2-2, "2"]
\arrow[from=4-1, to=3-2, "2"']
\arrow[from=4-3, to=4-1, "2"]
\end{tikzcd}\]
has a disconnected full subquiver formed by removing the vertex $j$
\[\begin{tikzcd}
& m \\
&  \\
& \ell\\
i & & k
\arrow[from=3-2, to=4-3, "3"]
\arrow[from=4-1, to=3-2, "2"]
\arrow[from=4-3, to=4-1, "2"]
\end{tikzcd}\]
As the subquiver induced by $\{i,\ell,k\}$ is mutation-infinite and the full subquiver induced by $\{i, \ell, j, k\}$ is disconnected, our original quiver does not have a finite forkless part by Corollary \ref{cor-disconnected-finite}.

\section{Keys, Pre-forks, And Their Mutations} \label{sec-keys-pre-forks}

Before we continue on to our next hereditary and mutation-invariant property, we must first define a small generalization of abundant acyclic quivers and study their mutations.

\begin{Def} \label{def-keys}
    Let $Q$ be a quiver.
    We say that $Q$ is a \textit{key} with vertices $\{k,k'\}$ if it satisfies the following:
    \begin{itemize}
        \item Both full subquivers $Q \setminus \{k\}$ and $Q \setminus \{k'\}$ are abundant acyclic quivers.

        \item Let $i \in Q_0 \setminus \{k,k'\}$.
        Then we have either
        \[\begin{tikzcd}
            & i \\
            k & & k' 
            \arrow[from=2-1, to=1-2, "q_{ki}"]
            \arrow[from=2-3, to=1-2, "q_{k'i}"']
            \arrow[from=2-1, to=2-3, no head, dashed]
        \end{tikzcd}
        \text{ or }
        \begin{tikzcd}
            & i \\
            k & & k' 
            \arrow[from=1-2, to=2-1, "q_{ik}"']
            \arrow[from=1-2, to=2-3, "q_{ik'}"]
            \arrow[from=2-1, to=2-3, no head, dashed]
        \end{tikzcd},\]
        where the dashed line represents any number of arrows, including zero.
    \end{itemize}
\end{Def}

The definition of a key gives us an immediate corollary on some of its properties.

\begin{Cor} \label{cor-key-subquiver}
    Let $Q$ be a key.
    Then there is no directed path of length greater than 1 from $k$ to $k'$ or from $k'$ to $k$, forcing $Q$ to be an acyclic quiver.
    Additionally, if $P$ is a full subquiver of $Q$, then it is either abundant acyclic or itself a key.
\end{Cor}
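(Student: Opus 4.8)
The plan is to treat the two assertions separately, deriving acyclicity of $Q$ from the statement about paths between $k$ and $k'$, and then observing that membership in the class of keys is detected vertex-by-vertex among the non-distinguished vertices, so it is visibly inherited by full subquivers. First I would rule out a directed path of length $\geq 2$ from $k$ to $k'$ (the case $k'$ to $k$ being symmetric). Suppose $k = w_0 \to w_1 \to \cdots \to w_\ell = k'$ with $\ell \geq 2$, so $w_1,\dots,w_{\ell-1} \in Q_0 \setminus \{k,k'\}$. The arrow $k \to w_1$ puts $w_1$ in the first alternative of Definition \ref{def-keys}, hence $k' \to w_1$; the arrow $w_{\ell-1} \to k'$ puts $w_{\ell-1}$ in the second alternative, hence $w_{\ell-1} \to k$. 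Then $k \to w_1 \to \cdots \to w_{\ell-1} \to k$ is a nonempty closed directed walk inside $Q \setminus \{k'\}$ (when $\ell = 2$ this degenerates to the $2$-cycle $k \to w_1 \to k$, already forbidden), contradicting the fact that $Q \setminus \{k'\}$ is abundant acyclic, hence acyclic.

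Next I would deduce that $Q$ is acyclic. If not, pick a simple directed cycle $C$ in $Q$. If $C$ omits $k$ it lies in $Q \setminus \{k\}$, and if it omits $k'$ it lies in $Q \setminus \{k'\}$; both are acyclic, so $C$ must pass through both $k$ and $k'$. Then $C$ decomposes into a directed path from $k$ to $k'$ and a directed path from $k'$ to $k$; these cannot both have length $1$ (that would force arrows $k \to k'$ and $k' \to k$, a $2$-cycle), so one of them has length $\geq 2$, contradicting the previous paragraph. Hence $Q$ is acyclic.

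For the last assertion, let $P$ be a full subquiver of $Q$. If $P$ contains at most one of $k, k'$, then $P$ is a full subquiver of one of $Q \setminus \{k\}$ or $Q \setminus \{k'\}$, each abundant acyclic; since abundance and acyclicity are both inherited by restriction to full subquivers, $P$ is abundant acyclic. If instead $P$ contains both $k$ and $k'$, I claim $P$ is a key with the same distinguished pair $\{k,k'\}$: the quivers $P \setminus \{k\}$ and $P \setminus \{k'\}$ are full subquivers of $Q \setminus \{k\}$ and $Q \setminus \{k'\}$, hence abundant acyclic; and for each $i \in P_0 \setminus \{k,k'\} \subseteq Q_0 \setminus \{k,k'\}$, fullness of $P$ means the arrows among $i$, $k$, $k'$ agree with those in $Q$, so the dichotomy required at $i$ carries over verbatim (the degenerate case $P_0 = \{k,k'\}$ being covered since the dichotomy condition is then vacuous).

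I do not expect a serious obstacle here; the steps are elementary. The points that demand care are the length-$2$ edge case in the path argument, where one invokes the no-$2$-cycle convention of Definition \ref{def-quivers} rather than acyclicity of a subquiver, and checking that the ``abundant acyclic'' alternative of the conclusion genuinely absorbs every full subquiver meeting $\{k,k'\}$ in at most one vertex — including the empty and single-vertex subquivers, which are abundant acyclic vacuously.
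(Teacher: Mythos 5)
Your proof is correct and follows essentially the same route as the paper: the key dichotomy redirects the vertex adjacent to $k'$ on a $k$-to-$k'$ path back to $k$, producing a cycle inside the acyclic quiver $Q \setminus \{k'\}$, and a minimal cycle must pass through both $k$ and $k'$. You additionally write out the full-subquiver case, which the paper treats as immediate from Definition \ref{def-keys}; your handling of it (and of the length-$2$ and small-subquiver edge cases) is fine.
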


\begin{proof}
    Suppose that there exists a cycle of minimum length in $Q$.
    Then this cycle must pass through both $k$ and $k'$, as $Q \setminus \{k\}$ and $Q \setminus \{k'\}$ are abundant acyclic by the definition of a key.
    Then there is an directed path from $k$ to $k'$.
    Let $j$ be the final vertex of this path such that $j \to k'$.
    If $j \neq k$, then we get that $j \to k$ by the second property of keys, providing us a cycle contained in $Q \setminus \{k'\}$.
    This creates a contradiction, and if $j = k$, then we can use the same argument on the directed path from $k'$ to $k$.
    This proves that $Q$ must be acyclic.
\end{proof}

Knowing the set $Q_{k'}^k := [Q^+(k) \cap Q^-(k')] \cup [Q^+(k') \cap Q^-(k)]$ is useful for any quiver $Q$ when mutating.
For example, the second condition of a key implies that $Q_{k'}^k = \emptyset$, which means that mutation at any vertex other than $k$ or $k'$ will not modify the number or direction of arrows between $k$ and $k'$.
Warkentin uses the set $Q_{k'}^k$ judiciously when constructing future types of quivers.
We must first prove some results on our newly defined keys, similar to the ones we showed for abundant acyclic quivers.

\begin{Lem} \label{lem-key-ordering}
    Let $Q$ be a key with vertices $\{k,k'\}$.
    If $q_{kk'} = 0$, then $Q$ has at most two possible acyclic orderings on its vertices, with $k \prec k'$ in one and $k' \prec k$ in the other.
    If $|q_{kk'}| \geq 1$, then $Q$ has a unique acyclic ordering on its vertices.
\end{Lem}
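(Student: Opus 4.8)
The plan is to pin down an acyclic ordering of $Q$ by restricting it to the two abundant acyclic full subquivers $Q\setminus\{k\}$ and $Q\setminus\{k'\}$ that come with the definition of a key, each of which has a \emph{unique} acyclic ordering by Corollary~\ref{cor-acyclic-ordering}. Set $R=Q_0\setminus\{k,k'\}$. Since $Q$ is acyclic by Corollary~\ref{cor-key-subquiver}, Proposition~\ref{prop-acyclic-ordering} guarantees that at least one acyclic ordering exists, so the task is purely to bound their number from above. Let $\sigma$ and $\tau$ denote the unique acyclic orderings of $Q\setminus\{k\}$ and of $Q\setminus\{k'\}$ respectively. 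The first step is the elementary observation that restricting an acyclic ordering of $Q$ to a full subquiver yields an acyclic ordering of that subquiver; hence every acyclic ordering of $Q$ restricts to $\sigma$ on $Q_0\setminus\{k\}$ and to $\tau$ on $Q_0\setminus\{k'\}$. Now every pair of vertices of $Q$ other than the pair $\{k,k'\}$ itself lies inside $Q_0\setminus\{k\}$ or inside $Q_0\setminus\{k'\}$, so $\sigma$ and $\tau$ together already fix the relative order of every such pair. Consequently an acyclic ordering of $Q$ is completely determined by $\sigma$, $\tau$, and the relative order of $k$ and $k'$, leaving at most two candidates, and any two of them differ from one another exactly by transposing $k$ and $k'$. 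This already yields the bound ``at most two'' together with the ``$k\prec k'$ in one, $k'\prec k$ in the other'' description.

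To finish the case $|q_{kk'}|\geq 1$, I would note that an arrow between $k$ and $k'$, say $k\to k'$, forces $k\prec k'$ in every acyclic ordering of $Q$; this kills the remaining freedom, so the acyclic ordering is unique (and exists, as already noted). For completeness one can also check that when $q_{kk'}=0$ both candidates genuinely occur: starting from any acyclic ordering of $Q$ and transposing $k$ and $k'$, the only arrows whose endpoints change relative position are the ones between $\{k,k'\}$ and $R$, and the defining ``paired arrows'' condition of a key from Definition~\ref{def-keys}---for each $i\in R$ one has $i\to k$ if and only if $i\to k'$, and $k\to i$ if and only if $k'\to i$, with exactly one of these holding since $i$ is joined to both $k$ and $k'$ in the abundant quivers---shows that the transposed ordering is still acyclic.

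The point I expect to require the most care is making precise that \emph{no} indirect constraint links $k$ and $k'$ through the vertices of $R$: a priori the position forced on $k$ by $\tau$ and the position forced on $k'$ by $\sigma$ could already pin down their relative order, and ruling this out is precisely where the key axiom ($i$ pointing the same way to $k$ as to $k'$, for every $i\in R$) is essential---it is what places $k$ and $k'$ into the same ``slot'' of the common ordering of $R$, so that both of their relative orders remain admissible exactly when $q_{kk'}=0$.
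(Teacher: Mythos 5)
Your proof is correct and follows essentially the same route as the paper: use the unique acyclic orderings of the abundant acyclic subquivers $Q\setminus\{k\}$ and $Q\setminus\{k'\}$ to fix every relation except that between $k$ and $k'$, with the key axiom ($i\to k$ iff $i\to k'$) ruling out any indirect constraint, and then split on $q_{kk'}$. Your explicit check that both orderings are actually realized when $q_{kk'}=0$ is a small extra verification the paper leaves implicit, but it is the same argument.
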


\begin{proof}
    From the definition of a key, we have that $k \to i$ if and only if $k' \to i$ for any vertex $i \neq k,k'$.
    When translated into the language of an acyclic ordering, we have that $k \prec i$ if and only if $k' \prec i$ for $i \neq k,k'$.
    Since both $Q \setminus \{k\}$ and $Q \setminus \{k'\}$ are abundant acyclic quivers, there is a unique acyclic ordering of vertices for each quiver.
    It follows that $i \prec j$ in the first acyclic ordering if and only if $i \prec j$ in the second for all vertices $i,j$ such that $\{i,j\} \cap \{k,k'\} \neq \{k,k'\}$.
    We then need only worry about the relation between $k$ and $k'$.
    
    If $q_{kk'} = 0$, then we can have either $k \prec k'$ or $k' \prec k$, producing our two promised acyclic orderings.
    If $|q_{kk'}| \geq 1$, then we have exclusively $k \prec k'$ or $k' \prec k$, depending on the direction of the arrow between the two vertices.
\end{proof}

\begin{Cor} \label{cor-key-source-sink}
    Let $Q$ be a key with vertices $\{k,k'\}$ and $v \neq k,k'$ be any other vertex.
    Then $k \prec v$ if and only if $k' \prec v$.
    If $q_{kk'} = 0$, then $Q$ has either one source and one sink, two sources and one sink, or one source and two sinks.
    In the latter two cases, both $k$ and $k'$ are a source or a sink respectively.
    If $|q_{kk'}| =1 $, then $Q$ has one source and one sink.
\end{Cor}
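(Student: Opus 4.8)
The plan is to reduce everything to one structural observation about keys: since both $Q\setminus\{k\}$ and $Q\setminus\{k'\}$ are abundant, the only pair of vertices of $Q$ that can fail to be joined by an arrow is $\{k,k'\}$ itself, and every other pair is in fact joined by at least two arrows. Combined with the fact that $Q$ is acyclic (Corollary \ref{cor-key-subquiver}), so that it admits an acyclic ordering by Proposition \ref{prop-acyclic-ordering}, this pins down the possible sources and sinks.

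First I would dispatch the statement $k\prec v\iff k'\prec v$. Fix any acyclic ordering $\prec$ of $Q$ and any $v\neq k,k'$. Because $k$ and $v$ both lie in the abundant quiver $Q\setminus\{k'\}$ there is an arrow between them, so $k\prec v$ is equivalent to $k\to v$; likewise $k'\prec v$ is equivalent to $k'\to v$. The defining dichotomy for keys applied to $i=v$ says $k\to v$ holds exactly when $k'\to v$ holds, which gives the equivalence. (This also re-derives the adjacency of $k$ and $k'$ in the ordering that is used implicitly in Lemma \ref{lem-key-ordering}.)

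Next, for the source/sink count I would fix an acyclic ordering $v_1\prec\cdots\prec v_n$. Here $v_1$ is a source and $v_n$ is a sink, since no arrow can enter the minimum vertex or leave the maximum one. If $w\neq v_1$ were also a source, then there is no arrow between $v_1$ and $w$ (an arrow $v_1\to w$ would enter $w$, while $w\to v_1$ would contradict minimality of $v_1$), so $\{v_1,w\}=\{k,k'\}$ by the structural observation; the same argument shows there is at most one such $w$. Hence $Q$ has at most two sources, and if it has two they are precisely $k$ and $k'$; dually for sinks. Finally, $k$ cannot be simultaneously a source and a sink, because it meets at least one arrow (it lies in an abundant subquiver), so $Q$ cannot have two sources and two sinks at once. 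This leaves exactly the three listed configurations, proving the case $q_{kk'}=0$. For $|q_{kk'}|=1$, say $k\to k'$ is the unique arrow between them: then $k'$ has an incoming arrow so is not a source, and $k$ has an outgoing arrow so is not a sink, which excludes the two-sources and two-sinks options and leaves one source and one sink.

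I do not anticipate a serious obstacle here. The only point requiring care is to argue consistently in terms of arrows rather than order relations when passing between $k$, $k'$ and a third vertex $v$ — which is exactly where abundance of the two relevant subquivers is invoked — and to keep in mind that when $q_{kk'}=0$ the acyclic ordering is only "almost unique" (Lemma \ref{lem-key-ordering}), so the source/sink argument must be phrased for an arbitrary acyclic ordering rather than a canonical one.
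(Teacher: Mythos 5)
Your proof is correct and follows essentially the same route as the paper, which leaves this as an immediate consequence of Lemma \ref{lem-key-ordering}: the equivalence $k\prec v \iff k'\prec v$ comes from the key dichotomy $k\to v \iff k'\to v$, and the source/sink count comes from abundance of $Q\setminus\{k\}$ and $Q\setminus\{k'\}$ (so only the pair $\{k,k'\}$ can be non-adjacent) together with acyclicity. Your write-up just spells out explicitly, in terms of arrows and an arbitrary acyclic ordering, what the paper treats as immediate from the lemma and its proof.
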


Notice how every key with $|q_{kk'}| \geq 2$ is necessarily an abundant acyclic quiver.
Indeed, every abundant acyclic quiver can be thought of as a key.
Just take two neighboring elements in the acyclic ordering as $k$ and $k'$.
This idea of generalizing abundant acyclic quivers to keys can be applied to forks as well, giving us pre-forks.

\begin{Def}{\textup{\cite[Definition 3.7]{warkentin_exchange_2014}}} \label{def-pre-fork}
A pre-fork is a quiver $P$ with two vertices $k \neq k'$ such that the following holds:
\begin{enumerate}
    \item Both full subquivers $P \setminus \{k\}$ and $P \setminus \{k'\}$ are forks with common point of return $r$.

    \item Let $i \neq k,k' \in P_0$.
        Then we have either
        \[\begin{tikzcd}
            & i \\
            k & & k' 
            \arrow[from=2-1, to=1-2, "p_{ki}"]
            \arrow[from=2-3, to=1-2, "p_{k'i}"']
            \arrow[from=2-1, to=2-3, no head, dashed]
        \end{tikzcd}
        \text{ or }
        \begin{tikzcd}
            & i \\
            k & & k' 
            \arrow[from=1-2, to=2-1, "p_{ik}"']
            \arrow[from=1-2, to=2-3, "p_{ik'}"]
            \arrow[from=2-1, to=2-3, no head, dashed]
        \end{tikzcd},\]
        where the dashed line represents any number of arrows, including zero.
\end{enumerate}
We usually denote a pre-fork $P$ with its triple of vertices $\{r,k,k'\}$, where $r$ is referred to as the point of return. 
\end{Def}

The reason for the name ``pre-forks`` comes from the fact that nearly all pre-forks are forks.
It is only when $|p_{kk'}| < 2$ that they differ \cite[Lemma 6.1]{warkentin_exchange_2014}.
Thus pre-forks are related to forks in the same way that keys are related to abundant acyclic quivers.
Additionally, just like forks, they have some nice mutation properties.

\begin{Lem}{\textup{\cite[Corollary 6.2]{warkentin_exchange_2014}}} \label{lem-pre-fork-mutation}
    Let $P$ be a pre-fork with vertices $\{r,k,k'\}$.
    Then $Q = \mu_m(P)$ is a pre-fork with vertices $\{m,k,k'\}$ for all vertices $m \notin \{r,k,k'\}$.
    Additionally, we have that $|Q_1| > |P_1|$ and that $p_{kk'} = q_{kk'}$.
\end{Lem}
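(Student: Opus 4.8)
The plan is to run, one level up, the same kind of argument Warkentin uses for forks, exploiting that a pre-fork is ``two forks glued along the common point of return $r$.'' Throughout, write $S^{+}$ for the set of $i\in P_0\setminus\{k,k'\}$ with $k\to i$ and $k'\to i$ in $P$, and $S^{-}$ for those with $i\to k$ and $i\to k'$; by the second pre-fork axiom (Definition~\ref{def-pre-fork}) these sets partition $P_0\setminus\{k,k'\}$, and the same axiom shows at once that $P^{k}_{k'}=\emptyset$. I would also first record two elementary facts about a fork $F$ with point of return $r$. (i) $F$ has no sources and no sinks: the sets $F^{-}(r)$ and $F^{+}(r)$ are both nonempty (otherwise $r$ is a sink or a source and, since $F\setminus\{r\}$ is acyclic, $F$ itself is acyclic), and then every vertex --- including $r$ --- has an incoming and an outgoing arrow. (ii) Every directed cycle of $F$ passes through $r$: a cycle avoiding $r$ lies in $F^{-}(r)\cup F^{+}(r)$, but the two induced subquivers are acyclic and every arrow between the two sets points from $F^{+}(r)$ to $F^{-}(r)$, so no such cycle exists.

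Because $m\notin\{k,k'\}$, mutation at $m$ commutes with deleting $k$ or $k'$, so $Q\setminus\{k\}=\mu_m(P\setminus\{k\})$ and $Q\setminus\{k'\}=\mu_m(P\setminus\{k'\})$. Each of $P\setminus\{k\}$ and $P\setminus\{k'\}$ is a fork with point of return $r\neq m$, so by (i) the vertex $m$ is neither a source nor a sink there, and Lemma~\ref{lem-fork-mutation} gives that $Q\setminus\{k\}$ and $Q\setminus\{k'\}$ are forks with common point of return $m$, each with strictly more arrows than its preimage. That is the first pre-fork axiom for $Q$, with $r$ replaced by $m$. Moreover $P^{k}_{k'}=\emptyset$ forces $m\notin P^{k}_{k'}$, so $P$ has no path $k\to m\to k'$ or $k'\to m\to k$; hence $\mu_m$ neither creates nor removes arrows between $k$ and $k'$, giving $q_{kk'}=p_{kk'}$.

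It remains to verify the second pre-fork axiom for $Q$ and the strict inequality $|Q_1|>|P_1|$; this is the core of the argument. Fix $i\in P_0\setminus\{k,k'\}$. Since $Q\setminus\{k'\}$ and $Q\setminus\{k\}$ are forks, hence abundant, $q_{ki}$ and $q_{k'i}$ are nonzero, so I need only that they point the same way. For $i=m$ both arrows $k\!-\!m$ and $k'\!-\!m$ simply reverse, staying parallel. For $i\neq m$ the only way $\mu_m$ changes $k\!-\!i$ is by a composed path through $m$, and since $k$ and $k'$ have the same orientation relative to $m$, the arrows newly contributed to $k\!-\!i$ and to $k'\!-\!i$ point the same way; so $k\!-\!i$ can flip only when the original orientation was the opposite one --- i.e. $i\in S^{+}$ with $m\in S^{-}$ and $i\to m$, or $i\in S^{-}$ with $m\in S^{+}$ and $m\to i$. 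In either case $P\setminus\{k'\}$ contains a $3$-cycle on $\{i,m,k\}$ and $P\setminus\{k\}$ one on $\{i,m,k'\}$; by fact (ii) these cycles meet $r$, and since $m\neq r$ and $k,k'\neq r$, we are forced to have $i=r$. Now the defining inequalities of a fork (Definition~\ref{def-forks}) for $P\setminus\{k'\}$ and $P\setminus\{k\}$ --- which make the cross edge $m\!-\!k$, resp. $m\!-\!k'$, strictly larger than the edges at $r$ --- together with abundance ($p_{rm}\geq 2$ or $p_{mr}\geq 2$) show that the two-step multiplicity through $m$ strictly dominates the edge it reverses, so both $k\!-\!r$ and $k'\!-\!r$ do reverse, in the same direction, and each with strictly larger multiplicity than before. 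In every remaining case the multiplicity of $k\!-\!i$ (and of $k'\!-\!i$) is unchanged or increased with orientation preserved. This gives the second axiom, so $Q$ is a pre-fork with vertices $\{m,k,k'\}$; and since no arrow incident to $k$ ever decreases while $|(Q\setminus\{k\})_1|>|(P\setminus\{k\})_1|$ by Lemma~\ref{lem-fork-mutation}, adding these yields $|Q_1|>|P_1|$.

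I expect the genuine obstacle to be precisely this flip analysis: isolating the $3$-cycle (hence $i=r$) as the only dangerous configuration, and then extracting from the fork inequalities the quantitative fact that the two-step multiplicity through $m$ beats the edge it reverses --- simultaneously on the $k$ and $k'$ sides, so that the second pre-fork axiom survives and, as a bonus, the total arrow count strictly increases. Everything else (commuting mutation with restriction, facts (i) and (ii) about forks, and $q_{kk'}=p_{kk'}$) is routine bookkeeping.
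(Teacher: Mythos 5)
Your argument is correct. Note that the paper itself does not prove this statement: it is quoted from Warkentin's dissertation (Corollary 6.2 there), so there is no in-paper proof to compare against. Your write-up is a sound, self-contained reconstruction: commuting $\mu_m$ with deletion of $k$ or $k'$ and invoking Lemma~\ref{lem-fork-mutation} (after observing that a fork has no sources or sinks and $m\neq r$) settles the first pre-fork axiom and the arrow count on the two fork slices; $P^k_{k'}=\emptyset$ gives $p_{kk'}=q_{kk'}$; and your flip analysis correctly isolates the only dangerous configurations as $3$-cycles through $m$, $k$ (resp.\ $k'$) and $i$, which by the ``every cycle of a fork passes through its point of return'' fact forces $i=r$, where the fork inequalities $f_{km}>f_{mr},f_{rk}$ (and their $k'$-analogues) together with abundance show both edges $k\!-\!r$ and $k'\!-\!r$ reverse consistently and with strictly larger multiplicity, yielding the second axiom and $|Q_1|>|P_1|$. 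This is the same style of ``two-step multiplicity dominates'' and cycle-through-the-point-of-return reasoning the paper uses in its own proofs of the neighboring results (Lemmata~\ref{lem-key-mutation-k} and~\ref{lem-wing-mutation-1}), so your proposal fits the intended toolkit; I see no gap.
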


\begin{Lem}{\cite[Lemma 3.8]{warkentin_exchange_2014}} \label{lem-pre-fork-mutation-k}
    Let $P$ be a pre-fork with vertices $\{r,k,k'\}$, let $Q = \mu_k(P)$, and let $Q' = \mu_{k'}(P)$.
    Then
    \begin{itemize}
        \item $|Q_1| > |P_1|$ and $|Q_1'| > |P_1|$;

        \item $Q \setminus \{k'\}$ and $Q' \setminus \{k\}$ are forks with respective points of return $k$ and $k'$;

        \item $Q \setminus \{k\}$ and $Q' \setminus \{k'\}$ are abundant acyclic quivers;

        \item $Q_{k'}^k = (Q')_{k'}^k = P_0 \setminus \{k,k'\}$ as sets;

        \item $\mu_h(Q)$ and $\mu_h(Q')$ are both forks with point of return $h$ for all $h \neq k,k'$.
    \end{itemize} 
\end{Lem}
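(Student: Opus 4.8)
The plan is to prove every clause for $Q=\mu_k(P)$; the clauses for $Q'=\mu_{k'}(P)$ then follow verbatim, since Definition~\ref{def-pre-fork} is symmetric under interchanging $k$ and $k'$. Two observations organize the argument. First, mutation commutes with deleting a vertex one is not mutating at, so $Q\setminus\{k'\}=\mu_k(P\setminus\{k'\})$, while the arrows of $Q$ among $P_0\setminus\{k\}$ are obtained from those of the fork $P\setminus\{k\}$ by adjoining $p_{ak}p_{kb}$ copies of $a\to b$ for every $2$-path $a\to k\to b$ in $P$ and then cancelling $2$-cycles. Second, since the forks $P\setminus\{k\}$ and $P\setminus\{k'\}$ are abundant, clause~(2) of Definition~\ref{def-pre-fork} forces $k$ and $k'$ to have the same neighbours and orientations outside $\{k,k'\}$; writing $S^-=P^-(k)\setminus\{k'\}=P^-(k')\setminus\{k\}$ and $S^+=P^+(k)\setminus\{k'\}=P^+(k')\setminus\{k\}$, we get $S^-\sqcup S^+=P_0\setminus\{k,k'\}$. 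The same clause shows that $\mu_k$ merely reverses the arrows between $k$ and $k'$ (so $|q_{kk'}|=|p_{kk'}|$), and that every arrow it adjoins incident to $k'$ points the same way as an arrow already present there, so no incidence at $k'$ decreases.

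First I would check that $k$ is neither a source nor a sink of the fork $F:=P\setminus\{k'\}$ with point of return $r$. If $k$ were a source, then $k\to r$ puts $k\in F^-(r)$, and choosing any $j\in F^+(r)$ (nonempty, else $r$ would be a sink and $F$ acyclic) the defining inequality $f_{jk}>f_{kr}>0$ forces $j\to k$, contradicting that $k$ is a source; the sink case is dual. In particular $S^-$ and $S^+$ are both nonempty. Now Lemma~\ref{lem-fork-mutation} applies to $F$ at $k$, so $\mu_k(F)$ is a fork with point of return $k$ having strictly more arrows than $F$; as $\mu_k(F)=Q\setminus\{k'\}$, this is the second clause, and combined with the fact that no incidence at $k'$ drops it gives $|Q_1|>|P_1|$, the first clause.

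For the third and fourth clauses, I would note that $Q\setminus\{k,k'\}=(Q\setminus\{k'\})\setminus\{k\}$ is a full subquiver of the fork $Q\setminus\{k'\}$ obtained by deleting its point of return $k$, hence abundant acyclic by Lemma~\ref{lem-fork-acyclic}. Flipping the arrows at $k$ shows that in $Q$ one has $k\to i$ and $i\to k'$ for each $i\in S^-$, and $i\to k$ and $k'\to i$ for each $i\in S^+$; reading this off gives $Q^k_{k'}=S^-\sqcup S^+=P_0\setminus\{k,k'\}$, the fourth clause. Moreover the defining inequality of the fork $Q\setminus\{k'\}$, whose minus- and plus-parts at $k$ are $S^+$ and $S^-$, forces $j\to i$ for all $j\in S^-$ and $i\in S^+$; since $Q\setminus\{k,k'\}$ is abundant acyclic it has a unique acyclic ordering by Corollary~\ref{cor-acyclic-ordering}, in which therefore every vertex of $S^-$ precedes every vertex of $S^+$. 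As $k'$ has in-neighbours exactly $S^-$ and out-neighbours exactly $S^+$ inside $Q\setminus\{k\}$, no directed cycle can pass through $k'$, so $Q\setminus\{k\}$ is acyclic; it is abundant because $P\setminus\{k\}$ is and no incidence at $k'$ dropped. This is the third clause.

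For the last clause, fix $h\ne k,k'$. By the inequality argument of the second paragraph (now using that $S^\pm$ are nonempty), $h$ is neither a source, a sink, nor the point of return $k$ of the fork $Q\setminus\{k'\}$, so Lemma~\ref{lem-fork-mutation} gives that $\mu_h(Q)\setminus\{k'\}=\mu_h(Q\setminus\{k'\})$ is a fork with point of return $h$; in particular $\mu_h(Q)$ is not acyclic. This already supplies, for $\mu_h(Q)$ with candidate point of return $h$, the defining inequality for every pair of vertices not containing $k'$, together with the acyclicity of the two $h$-sides once one checks --- exactly as for $k'$ in the third paragraph --- that re-introducing $k'$ into the appropriate side creates no cycle; a pair containing $k'$ but not $k$ lies in $\mu_h(Q)\setminus\{k\}=\mu_h(Q\setminus\{k\})$, which is a fork with point of return $h$ unless $h$ is extremal in the abundant acyclic quiver $Q\setminus\{k\}$, and in that exceptional case a short direct check shows that no such pair actually straddles the two sides of $h$. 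What remains --- the genuinely quantitative point --- is the pair $\{k,k'\}$: mutating at $h$ creates a $2$-path through $h$ joining $k$ and $k'$ (namely $k\to h\to k'$ when $h\in S^-$, and $k'\to h\to k$ when $h\in S^+$), and one must verify that after cancellation at least two arrows survive between $k$ and $k'$ and that their number strictly exceeds the numbers of arrows joining $h$ to $k$ and to $k'$. This follows from $p_{hk},p_{hk'}\ge 2$ together with the fact that $\mu_k$ has already inflated the $h$--$k'$ multiplicity by $p_{hk}\max(p_{kk'},0)$, and it can be cross-checked against the pre-fork $\mu_h(P)$ of Lemma~\ref{lem-pre-fork-mutation}. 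I expect this multiplicity bookkeeping in the last clause to be the main obstacle; the first four clauses are essentially formal consequences of Lemmas~\ref{lem-fork-acyclic} and~\ref{lem-fork-mutation} and the fact that mutation commutes with deleting an unmutated vertex.
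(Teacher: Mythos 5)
Your proposal cannot be checked against an in-paper argument, because the paper does not prove this lemma at all: it is imported directly from Warkentin (his Lemma 3.8). The closest thing the paper proves itself is the key analogue, Lemma \ref{lem-key-mutation-k}, and your route runs parallel to that proof: use that mutation commutes with deleting an unmutated vertex, get the second clause and the arrow count from Lemma \ref{lem-fork-mutation} applied to $P\setminus\{k'\}$ (your fork-inequality check that $k$ is neither a source nor a sink there is correct, and in fact shows forks have no sources or sinks at all), deduce the third and fourth clauses from Lemma \ref{lem-fork-acyclic} together with the observation that every vertex of $S^-$ points to every vertex of $S^+$, and reduce the fifth clause to the two deleted-vertex quivers plus a multiplicity estimate for the pair $\{k,k'\}$. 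Your handling of the ``exceptional case'' ($h$ extremal in $Q\setminus\{k\}$) is also right: there one $h$-side of $\mu_h(Q)$ is just $\{k\}$, so no fork-inequality pair contains $k'$ without $k$, and acyclicity of the $k'$-side still follows because $\mu_h(Q\setminus\{k\})$ is then abundant acyclic.

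The one place you stop short is the final quantitative step, which you name but do not execute; it does close, and it is worth recording. Cancellation only threatens when the existing $k$--$k'$ arrows in $Q$ oppose the newly created ones. If $k\to h\to k'$ in $Q$ (i.e.\ $h\in S^-$) and $c:=q_{k'k}=p_{kk'}>0$, then $\mu_k$ has already inflated the $h$--$k'$ multiplicity to $q_{hk'}=p_{hk'}+q_{kh}c$ with $q_{kh}=p_{hk},\,p_{hk'}\ge 2$, so the new $k$--$k'$ multiplicity is $q_{kh}q_{hk'}-c=q_{kh}p_{hk'}+c\bigl(q_{kh}^2-1\bigr)$, which exceeds $q_{kh}$ and exceeds $q_{hk'}$ by $(q_{kh}-1)p_{hk'}+c\bigl(q_{kh}^2-q_{kh}-1\bigr)>0$; the case $c\le 0$ is immediate. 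In the mirror case $k'\to h\to k$ the inflated quantity is the $k'$--$h$ multiplicity $q_{k'h}=p_{k'h}+q_{hk}\max(p_{k'k},0)$, not the $h$--$k'$ one, a small asymmetry your phrase ``$p_{hk}\max(p_{kk'},0)$'' glosses over but which changes nothing in the computation. With that bookkeeping written out, your proof is complete and self-contained, which is more than the paper provides for this statement.
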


Warkentin was able to show analogues of Lemma \ref{lem-fork-tree} for pre-forks.
In essence, this means that the quivers found after mutating through a pre-fork are well-controlled enough that we can ignore them, eventually developing the idea of pre-forkless part.
As such, it will be advantageous to explore the relation between pre-forks and keys, much like the relation between forks and abundant acyclic quivers.
We begin by exploring the subquivers of pre-forks.

\begin{Lem} \label{lem-pre-fork-key-subquiver}
    Let $P$ be a pre-fork with vertices $\{r,k,k'\}$.
    Then $P \setminus \{r\}$ is a key with vertices $\{k,k'\}$.
\end{Lem}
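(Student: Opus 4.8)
The plan is to verify the two defining conditions of a key (Definition~\ref{def-keys}) for the quiver $P \setminus \{r\}$, using the corresponding conditions of the pre-fork $P$ together with Warkentin's fact that removing the point of return from a fork yields an abundant acyclic quiver (contained in Lemma~\ref{lem-fork-acyclic}). The vertex set of $P \setminus \{r\}$ still contains both $k$ and $k'$ since $r \neq k, k'$, so it makes sense to ask whether it is a key with vertices $\{k,k'\}$.

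First I would check the first condition: that $(P \setminus \{r\}) \setminus \{k\}$ and $(P \setminus \{r\}) \setminus \{k'\}$ are abundant acyclic. Observe that $(P \setminus \{r\}) \setminus \{k\} = (P \setminus \{k\}) \setminus \{r\}$, and by condition~(1) of Definition~\ref{def-pre-fork} the quiver $P \setminus \{k\}$ is a fork with point of return $r$. By Lemma~\ref{lem-fork-acyclic}, deleting the point of return $r$ from this fork leaves an abundant acyclic quiver (it cannot remain a fork, since a fork with point of return $r$ must contain $r$). The same argument applied to $P \setminus \{k'\}$ handles $(P \setminus \{r\}) \setminus \{k'\}$. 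This gives the first bullet of Definition~\ref{def-keys}.

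Next I would check the second condition: for every vertex $i \in (P\setminus\{r\})_0 \setminus \{k,k'\}$, the arrows between $i$ and the pair $\{k,k'\}$ form one of the two prescribed configurations. But every such $i$ is a vertex of $P$ different from $r, k, k'$, so condition~(2) of Definition~\ref{def-pre-fork} already tells us exactly this — either both $k \to i$ and $k' \to i$, or both $i \to k$ and $i \to k'$ — and deleting $r$ does not touch any arrow incident to $i$, $k$, or $k'$. The dashed edge between $k$ and $k'$ (allowing any multiplicity, including zero) is likewise unaffected. Hence the second bullet of Definition~\ref{def-keys} holds verbatim, and $P \setminus \{r\}$ is a key with vertices $\{k,k'\}$.

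I do not expect a genuine obstacle here; the statement is essentially the "key-level shadow" of the definitional setup of pre-forks, and the only point requiring a cited result is that deleting the point of return from a fork yields an abundant acyclic quiver, which is Lemma~\ref{lem-fork-acyclic}. The mild subtlety worth stating explicitly is the commutation of deletions, $(P \setminus \{r\}) \setminus \{k\} = (P \setminus \{k\}) \setminus \{r\}$, and the observation that a fork with point of return $r$ necessarily has $r$ among its vertices, so its $r$-deleted subquiver falls into the abundant-acyclic case of Lemma~\ref{lem-fork-acyclic} rather than the fork case.
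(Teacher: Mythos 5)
Your proposal is correct and follows essentially the same route as the paper: both obtain abundance and acyclicity of $(P\setminus\{r\})\setminus\{k\}$ and $(P\setminus\{r\})\setminus\{k'\}$ from Lemma~\ref{lem-fork-acyclic} applied to the forks $P\setminus\{k\}$ and $P\setminus\{k'\}$ (with the point of return deleted), and both observe that the second key condition is inherited directly from condition~(2) of the pre-fork definition, which the paper phrases as $(P\setminus\{r\})_{k'}^k=\emptyset$ following from $P_{k'}^k=\emptyset$. Your explicit remarks on commuting the deletions and on why the fork case of Lemma~\ref{lem-fork-acyclic} cannot occur are just slightly more detailed versions of what the paper leaves implicit.
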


\begin{proof}
    As $P$ is a pre-fork with vertices $\{r,k,k'\}$, we know that the full subquivers $P \setminus \{k,r\}$ and $P \setminus \{k',r\}$ are both abundant acyclic from Lemma \ref{lem-fork-acyclic}.
    Additionally, we must have that $(P \setminus \{r\})_{k'}^k = \emptyset$ from $P_{k'}^k = \emptyset$.
    Therefore, the full subquiver $P \setminus \{r\} $ must be a key with vertices $\{k,k'\}$.
\end{proof}

\begin{Lem} \label{lem-pre-fork-subquivers}
    Let $P$ be a pre-fork with vertices $\{r,k,k'\}$.
    Then any full subquiver of $P$ is one of the following: 
    \begin{itemize}
        \item abundant acyclic;

        \item a key with vertices $\{k,k'\}$;

        \item a fork with point of return $r$;

        \item a pre-fork with vertices $\{r,k,k'\}$.
    \end{itemize}
\end{Lem}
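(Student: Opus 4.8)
The plan is to fix a full subquiver $P'$ of $P$, write $S$ for its vertex set, and split into cases according to whether $k$ and $k'$ belong to $S$. If $k \notin S$, then $P'$ is a full subquiver of $P \setminus \{k\}$, which by the definition of a pre-fork is a fork with point of return $r$; Lemma~\ref{lem-fork-acyclic} then tells us that $P'$ is either abundant acyclic or a fork with point of return $r$, both of which are on the list. The case $k' \notin S$ is symmetric. So the only case that requires real work is $\{k,k'\} \subseteq S$, and from now on I would assume this.

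Next I would record two inheritance facts. The second condition in the definition of a pre-fork is preserved under passing to full subquivers: for $i \in S \setminus \{k,k'\}$ the arrows joining $i$ to $k$ and to $k'$ in $P'$ are exactly those in $P$, so in $P'$ every such $i$ still either receives an arrow from each of $k,k'$ or sends an arrow to each of $k,k'$. Also, $P' \setminus \{k\}$ and $P' \setminus \{k'\}$ are full subquivers of the forks $P \setminus \{k\}$ and $P \setminus \{k'\}$, so by Lemma~\ref{lem-fork-acyclic} each of them is abundant acyclic or a fork with point of return $r$. If both are abundant acyclic, then $P'$ satisfies the definition of a key with vertices $\{k,k'\}$; if both are forks with point of return $r$ (which forces $r \in S$), then $P'$ satisfies the definition of a pre-fork with vertices $\{r,k,k'\}$. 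Hence it suffices to rule out the ``mixed'' possibility.

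The main obstacle, then, is to show that one cannot have $P' \setminus \{k\}$ a fork while $P' \setminus \{k'\}$ is abundant acyclic (the other mixed case following by interchanging the roles of $k$ and $k'$). The argument I would give: since $P' \setminus \{k\}$ is a fork it is not acyclic, so take a shortest directed cycle $C$ in it, which is then simple and of length at least $3$; since $P' \setminus \{k,k'\}$ is a full subquiver of the abundant acyclic quiver $P' \setminus \{k'\}$ it is acyclic, so $C$ must pass through $k'$. Let $a \to k' \to b$ be the two arrows of $C$ at $k'$, so $a,b \in S \setminus \{k,k'\}$ and, by simplicity, $a \neq b$. Because $a \to k'$, the inherited second condition forces $a \to k$ in $P'$; because $k' \to b$, it forces $k \to b$ in $P'$. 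The arc of $C$ from $b$ around to $a$ avoids $k$ (all of $C$ does) and avoids $k'$ (by simplicity), so concatenating it with $k \to b$ and $a \to k$ produces a directed cycle all of whose vertices lie in $S \setminus \{k'\}$, contradicting that $P' \setminus \{k'\}$ is acyclic. This closes the mixed case and the lemma. I expect this cycle-splicing step to be the only nontrivial point; the rest is bookkeeping with the definitions and Lemma~\ref{lem-fork-acyclic}. (Taking $S = P_0 \setminus \{r\}$ also recovers Lemma~\ref{lem-pre-fork-key-subquiver}.)
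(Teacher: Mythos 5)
Your proof is correct, and it is organized differently from the paper's. The paper first disposes of subquivers missing $k$ or $k'$ via Lemma~\ref{lem-fork-acyclic} (as you do), but then treats subquivers avoiding $r$ through the auxiliary results that $P \setminus \{r\}$ is a key (Lemma~\ref{lem-pre-fork-key-subquiver}) and that full subquivers of keys are keys or abundant acyclic (Corollary~\ref{cor-key-subquiver}); for subquivers containing all of $r,k,k'$ it splits according to whether the subquiver is acyclic and, if not, whether some cycle avoids $\{k,k'\}$, using $P_{k'}^k = \emptyset$ to reroute a cycle through $k$ into one through $k'$ and then to bypass each of $k,k'$ in turn, so that both vertex-deleted subquivers are forks. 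You instead apply Lemma~\ref{lem-fork-acyclic} directly to $P' \setminus \{k\}$ and $P' \setminus \{k'\}$ whenever both $k,k'$ are present, observe that the second pre-fork condition is inherited, and reduce everything to excluding the mixed case (one deletion a fork, the other abundant acyclic); your cycle-splicing step --- replacing the passage $a \to k' \to b$ of a shortest cycle by $a \to k$ and $k \to b$ --- is the same rerouting idea as the paper's, used in contrapositive form. What your route buys is uniformity: no separate treatment of $r$-free subquivers, no appeal to the key-subquiver corollary, and no acyclic-versus-cyclic case split, with the whole burden carried by one short contradiction argument. What the paper's route buys is that it exhibits explicitly how keys arise from $P \setminus \{r\}$ (a fact it reuses elsewhere) and constructs the two cycles needed for the pre-fork conclusion directly rather than by exclusion. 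Both arguments are complete, and, as you note, your case $S = P_0 \setminus \{r\}$ reproves Lemma~\ref{lem-pre-fork-key-subquiver} along the way.
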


\begin{proof}
    Both full subquivers of $P$ without $k$ and full subquivers of $P$ without $k'$ are subquivers of forks by Definition \ref{def-pre-fork}.
    Thus they are either abundant acyclic or forks with point of return $r$ by Lemma \ref{lem-fork-acyclic}.
    Additionally, we have that $P \setminus \{r\}$ is a key with vertices $\{k,k'\}$ by Lemma \ref{lem-pre-fork-key-subquiver}.
    Corollary \ref{cor-key-subquiver} tells us that every subquiver must also be abundant acyclic or a key.
    We are then left with full subquivers of $P$ that contain the vertices $r, k$, and $k'$.
    Let $Q$ be such a quiver.
    If $Q$ is acyclic, then $Q \setminus \{k\}$ and $Q \setminus \{k'\}$ are abundant acyclic with $Q_{k'}^k = \emptyset$, making it a key.
    As such, we will reduce to the case where $Q$ has a cycle.

    If this cycle is contained in $P \setminus \{k,k'\}$, then the cycle will be contained in both forks $P \setminus \{k\}$ and $P \setminus \{k'\}$.
    This forces $Q \setminus \{k\}$ and $Q \setminus \{k'\}$ to be forks with point of return $r$ by Lemma \ref{lem-fork-acyclic}, making $Q$ a pre-fork with vertices $\{r,k,k'\}$.
    The only problem arises when no cycle is contained in $P \setminus \{k,k'\}$.
    Assume without loss of generality that our cycle passes through $k$.
    We will show that this produces a cycle in both $Q \setminus \{k\}$ and $Q \setminus \{k'\}$.

    If there exists an arrow $r \to k$ ($k \to r$), then there exists an arrow $r \to k'$ ($k' \to r$), as $Q_{k'}^k = \emptyset$ and both $Q \setminus \{k\}$ and $Q \setminus \{k'\}$ are abundant.
    As $Q \setminus \{r\}$ is an acyclic quiver, we must have that any cycle passing through $k$ also passes through $r$. 
    The existence of a cycle passing through $k$ and $r$ implies that there is a path of arrows from $k$ to $r$ (from $r$ to $k$).
    As $Q_{k'}^k = \emptyset$, this tells us that there is an identical path of arrows from $k'$ to $r$ ($r$ to $k'$).
    This produces two cycles passing through $k$ and $k'$ respectively.
    Now we must show that these two cycles do not need to pass through both $k$ and $k'$.
    
    From $Q_{k'}^k = \emptyset$, a cycle passing through $k$ --- if it passes through $k'$ --- can bypass $k'$ by taking the same path through $k$ instead.
    The same can be said about any cycle passing through $k'$.
    We have then produced two cycles: one passing through $k$ but not $k'$ and one through $k'$ but not $k$.
    They are then respectively contained in $Q \setminus \{k\}$ and $Q \setminus \{k'\}$.
    Therefore, the quiver $Q$ must be a pre-fork with vertices $\{r,k,k'\}$.
\end{proof}

Once we see that subquivers of pre-forks and keys are well controlled, it becomes advantageous to understand how keys act under mutation.
As such, Lemma \ref{lem-pre-fork-mutation} and our new Lemma \ref{lem-key-mutation} combine to give us an analogue of Lemma \ref{lem-fork-mutation}.

\begin{Lem} \label{lem-key-mutation}
    Let $Q$ be a key with vertices $\{k,k'\}$.
    Then the quiver $P = \mu_r(Q)$ is a pre-fork with vertices $\{r,k,k'\}$, $|P_1| > |Q_1|$, and $p_{k,k'} = q_{k,k'}$ if $r \neq k,k'$ and $r$ is not a sink or source in $Q$.
    Furthermore, if $r \neq k,k'$ is a sink or source in $Q$, then $P$ is a key with vertices $\{k,k'\}$, $|P_1| = |Q_1|$, and $p_{k,k'} = q_{k,k'}$.
\end{Lem}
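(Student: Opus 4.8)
The plan is to mutate the two abundant‑acyclic subquivers $Q\setminus\{k\}$ and $Q\setminus\{k'\}$ separately — this is where Lemma~\ref{lem-fork-mutation} and the source/sink analysis behind Lemma~\ref{lem-abundant-acyclic-count} enter — and then to check that $k$ and $k'$ still interact with the rest of $P$ in the way a pre-fork (or a key) requires. Two preliminary remarks set this up. Since $r\neq k,k'$, mutation at $r$ commutes with deleting $k$ or $k'$, so that $P\setminus\{v\}=\mu_r(Q\setminus\{v\})$ for $v\in\{k,k'\}$. And because $Q$ is a key, $Q^k_{k'}=\emptyset$, so mutation at $r$ changes neither the number nor the direction of the arrows between $k$ and $k'$; hence $p_{kk'}=q_{kk'}$ in both halves of the statement.

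Assume first that $r$ is neither a source nor a sink of $Q$. By the key condition $k\to r\iff k'\to r$, so $r$ is also neither a source nor a sink of $Q\setminus\{k\}$ or of $Q\setminus\{k'\}$ (an incoming/outgoing arrow at $r$ running through $k$ is mirrored by one through $k'$, and conversely; note this also forces $|Q_0|\ge 4$). Thus Lemma~\ref{lem-fork-mutation} applies to each of these abundant acyclic quivers, so $P\setminus\{k\}=\mu_r(Q\setminus\{k\})$ and $P\setminus\{k'\}=\mu_r(Q\setminus\{k'\})$ are forks with common point of return $r$, each with strictly more arrows than before; this is condition~(1) of a pre-fork, and it also makes $P$ abundant on every pair except possibly $\{k,k'\}$. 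For condition~(2) I would show that $p_{ki}$ and $p_{k'i}$ point the same way for every $i\neq k,k'$. For $i=r$ the arrows at $r$ are merely reversed, so this is inherited from $Q$. For $i\neq k,k',r$, mutation at $r$ adds a bundle of arrows to the $k$--$i$ pair exactly when a directed $2$-path through $r$ joins $k$ and $i$, and this bundle always points the same way as the existing arrow $q_{ki}$ — otherwise $\{i,k,r\}$ would carry a directed triangle in $Q$, contradicting the acyclicity of keys (Corollary~\ref{cor-key-subquiver}). The same is true with $k'$ in place of $k$; since $q_{ki}$ and $q_{k'i}$ already agree in direction by the key condition, so do $p_{ki}$ and $p_{k'i}$, which gives condition~(2). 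Hence $P$ is a pre-fork with vertices $\{r,k,k'\}$. Moreover no $|p_{kj}|$ drops below $|q_{kj}|$, so writing $|P_1|=|(P\setminus\{k\})_1|+\sum_{j\neq k}|p_{kj}|$ and using that the first summand strictly increases by Lemma~\ref{lem-fork-mutation} yields $|P_1|>|Q_1|$.

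Now assume $r$ is a source or a sink of $Q$; say a source, the sink case following by reversing all arrows. No directed path passes through $r$, so mutation at $r$ merely reverses the arrows incident to $r$: thus $|P_1|=|Q_1|$, $p_{kk'}=q_{kk'}$, $p_{ki}=q_{ki}$ and $p_{k'i}=q_{k'i}$ for $i\neq k,k',r$, while $p_{kr}=-q_{kr}$ and $p_{k'r}=-q_{k'r}$. The second defining condition of a key for $P$ is then inherited directly from that of $Q$ (for $i=r$ both arrows reverse, preserving their common direction). Also $r$ remains a source of the abundant acyclic quivers $Q\setminus\{k\}$ and $Q\setminus\{k'\}$, and mutating an abundant acyclic quiver at a source again gives an abundant acyclic quiver, as in the proof of Lemma~\ref{lem-abundant-acyclic-count}; since $P\setminus\{k\}=\mu_r(Q\setminus\{k\})$ and $P\setminus\{k'\}=\mu_r(Q\setminus\{k'\})$, both are abundant acyclic, so $P$ is a key with vertices $\{k,k'\}$.

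The step I expect to be the main obstacle is the direction matching in the first case — concretely, ruling out the directed triangle on $\{i,k,r\}$, which is the one place where the actual structure of a key (its acyclicity) is genuinely used. Everything else is either a direct application of Warkentin's lemmas on forks and abundant acyclic quivers or routine bookkeeping with the mutation rule.
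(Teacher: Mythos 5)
Your proof is correct and follows essentially the same route as the paper: use the acyclicity of the key (no $3$-cycles) to see that mutation at $r$ only adds arrows and only reverses arrows incident to $r$, apply Lemma~\ref{lem-fork-mutation} to the abundant acyclic subquivers $Q\setminus\{k\}$ and $Q\setminus\{k'\}$ to obtain forks with point of return $r$, and handle the source/sink case by pure arrow reversal. Your extra check that $r$ is not a source or sink in the two subquivers is a detail the paper leaves implicit, not a different argument.
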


\begin{proof}.
    Since $Q$ is acyclic, there are no 3-cycles contained in $Q$.
    If $r$ is not a source or a sink, this shows that a positive number of arrows will be added to $Q$ and that only arrows directly touching $r$ will reverse direction.
    Thus $P_{k'}^k = \emptyset$, $|P_1| > |Q_1|$, and $p_{k,k'} = q_{k,k'}$ all from $Q_{k'}^k = \emptyset$.

    As $Q \setminus \{k\}$ and $Q \setminus \{k'\}$ are abundant acyclic quivers, we know that $P \setminus \{k\}$ and $P \setminus \{k'\}$ will be forks with point of return $r$.
    Hence, the quiver $P = \mu_r(Q)$ must be a pre-fork with vertices $\{r,k,k'\}$ as long as $r \neq k,k'$ and $r$ is not a sink or source in $Q$.

    If $r \neq k,k'$ is a sink or source in $Q$, then mutation at $r$ will only affect the direction of arrows touching $r$ and not the number of arrows in the quiver.
    Then $Q_{k'}^k = \emptyset$ implies that $P_{k'}^k = \emptyset$.
    Additionally, mutating at a sink or source will not introduce any cycles.
    Therefore, if $r \neq k,k'$ is a sink or source in $Q$, then $P$ is a key with vertices $\{k,k'\}$, $|P_1| = |Q_1|$, and $p_{k,k'} = q_{k,k'}$.
\end{proof}

\begin{Lem} \label{lem-key-mutation-k}
    Let $Q$ be a key with vertices $\{k,k'\}$ where neither is a sink or a source, let $P = \mu_k(Q)$, and let $P' = \mu_{k'}(Q)$.
    Then
    \begin{itemize}
        \item $|P_1| > |Q_1|$ and $|P_1'| > |Q_1|$;

        \item $P \setminus \{k'\}$ and $P' \setminus \{k\}$ are forks with respective points of return $k$ and $k'$;

        \item $P \setminus \{k\}$ and $P' \setminus \{k'\}$ are abundant acyclic quivers;

        \item $P_{k'}^k = (P')_{k'}^k = P_0 \setminus \{k,k'\}$ as sets;

        \item and $\mu_r(P)$ and $\mu_r(P')$ are both forks with point of return $r$ for all $r \neq k,k'$.
    \end{itemize} 
\end{Lem}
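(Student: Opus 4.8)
The plan is to mirror the proof of Lemma~\ref{lem-pre-fork-mutation-k} (Warkentin's Lemma~3.8), with the pair ``keys and abundant acyclic quivers'' playing here the role that ``pre-forks and forks'' play there. The one tool used throughout is that mutation commutes with passing to a full subquiver containing the mutated vertex; in particular $P\setminus\{k'\}=\mu_k(Q\setminus\{k'\})$ and $P'\setminus\{k\}=\mu_{k'}(Q\setminus\{k\})$, while $P\setminus\{k\}$ is obtained from $Q\setminus\{k\}$ by applying only the part of $\mu_k$ that acts on arrows not touching $k$. Beyond this I would rely on Lemmas~\ref{lem-fork-mutation} and~\ref{lem-fork-acyclic}, Corollary~\ref{cor-acyclic-ordering}, and the defining properties of a key (Definition~\ref{def-keys}), namely $Q_{k'}^k=\emptyset$ and that for every $i\neq k,k'$ the arrows between $i$ and $k$ and between $i$ and $k'$ are abundant and point the same way.

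The first step is a reduction: \emph{$k$ is neither a source nor a sink of $Q\setminus\{k'\}$, and $k'$ is neither a source nor a sink of $Q\setminus\{k\}$.} If, say, $k$ were a source of $Q\setminus\{k'\}$, then $k\to i$ for every $i\neq k,k'$, hence also $k'\to i$ for every such $i$; analysing the (possibly empty) arrows between $k$ and $k'$ then forces $k$ or $k'$ to be a source of $Q$, contradicting the hypothesis, and the dual argument handles sinks and the case of $Q\setminus\{k\}$. Granting this, Lemma~\ref{lem-fork-mutation} applied to the abundant acyclic quiver $Q\setminus\{k'\}$ shows $P\setminus\{k'\}$ is a fork with point of return $k$ (symmetrically, $P'\setminus\{k\}$ is a fork with point of return $k'$); and since $Q$ is acyclic it has no $3$-cycles, so mutating $Q$ at $k$ (which has both an in-neighbour and an out-neighbour) inserts at least one arrow with no cancellation against a pre-existing opposite arrow, giving $|P_1|>|Q_1|$ and likewise $|P_1'|>|Q_1|$. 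This settles the first two bullets. For the third, $P\setminus\{k,k'\}$ is a full subquiver of the fork $P\setminus\{k'\}$ omitting its point of return $k$, hence abundant acyclic by Lemma~\ref{lem-fork-acyclic}; re-adjoining $k'$, the hypotheses $Q_{k'}^k=\emptyset$ and ``$Q$ has no $3$-cycles'' imply $\mu_k$ can only raise multiplicities of arrows already incident to $k'$ (never creating a new pair or reversing an arrow there), so $P\setminus\{k\}$ is abundant, and in the unique acyclic ordering of $Q\setminus\{k\}$ the vertex $k'$ occupies exactly the slot formerly held by $k$, so every arrow $\mu_k$ adds among $Q_0\setminus\{k\}$ is forward and $P\setminus\{k\}$ is acyclic; symmetrically for $P'\setminus\{k'\}$. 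For the fourth bullet, fix $i\neq k,k'$: by Definition~\ref{def-keys} we have in $Q$ either $i\to k,\ i\to k'$ or $k\to i,\ k'\to i$; mutation at $k$ reverses the $k$--$i$ arrow but leaves the direction of the $k'$--$i$ arrow unchanged (a reversal there would force a $3$-cycle through $k$ in $Q$), so $i$ lands in $P^+(k)\cap P^-(k')$ in the first case and in $P^-(k)\cap P^+(k')$ in the second; either way $i\in P_{k'}^k$, so $P_{k'}^k=P_0\setminus\{k,k'\}$, and the identical computation gives $(P')_{k'}^k=P_0\setminus\{k,k'\}$.

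The last bullet is where the real work lies. Fix $r\neq k,k'$. Then $\mu_r(P)\setminus\{k'\}=\mu_r(P\setminus\{k'\})$, and since $P\setminus\{k'\}$ is a fork with point of return $k\neq r$ and a fork has neither a source nor a sink (a short consequence of the fork axioms: $F^{-}(r)$ and $F^{+}(r)$ are nonempty as $F$ is not acyclic, every vertex of $F^{+}(r)$ points to every vertex of $F^{-}(r)$, the source of $F^{-}(r)$ receives an arrow from $F^{+}(r)$, and the sink of $F^{+}(r)$ sends an arrow to $F^{-}(r)$), Lemma~\ref{lem-fork-mutation} shows $\mu_r(P)\setminus\{k'\}$ is a fork with point of return $r$. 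It remains to upgrade this to ``$\mu_r(P)$ is itself a fork with point of return $r$''. The obstacle is that $\mu_r(P)\setminus\{k\}=\mu_r(P\setminus\{k\})$ need not be a fork --- when $r$ is the unique source or sink of the abundant acyclic quiver $P\setminus\{k\}$ it is again abundant acyclic --- so $\mu_r(P)$ is not in general a pre-fork and one cannot simply invoke \cite[Lemma 6.1]{warkentin_exchange_2014}. I would instead verify the fork axioms for $\mu_r(P)$ directly: it is abundant and not acyclic, $r$ is the candidate point of return (bullet four pins down which sides $k$ and $k'$ fall on after $\mu_r$), and the required inequalities $f_{ji}>f_{ir}$ and $f_{ji}>f_{rj}$, together with acyclicity of the two sides, are checked by multiplicity bookkeeping, using the description of $P$ assembled above and $|p_{kk'}|=|q_{kk'}|$ --- the crux being that the arrows $\mu_r$ inserts between $k$ and $k'$ dominate, because in the binding case the number of arrows between $r$ and $k'$ in $P$ was already inflated by the path $r\to k\to k'$ present in $Q$. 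The statement for $P'$ follows by the same argument with $k$ and $k'$ interchanged. I expect this final verification --- essentially a re-run of the proof of Lemma~\ref{lem-fork-mutation} in the presence of the extra ``twin'' vertex $k'$ --- to be the main obstacle.
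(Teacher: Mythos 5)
Your treatment of the first four bullets matches the paper's proof in substance: the paper also gets the first and fourth bullets from $Q_{k'}^k=\emptyset$ plus acyclicity of $Q$, the second from Lemma~\ref{lem-fork-mutation}, and the third by a cycle-bypassing argument (any cycle in $P\setminus\{k\}$ through $k'$ can be shortcut using $P_{k'}^k=P_0\setminus\{k,k'\}$) rather than your ``all added arrows are forward in the acyclic ordering'' argument --- the two are interchangeable --- and your preliminary reduction showing that $k$ is not a source or sink of $Q\setminus\{k'\}$ is a detail the paper leaves implicit. The genuine gap is the fifth bullet, which is where the content of the lemma lies: you state a plan (``verify the fork axioms for $\mu_r(P)$ directly \dots by multiplicity bookkeeping''), correctly locate the binding case, but never carry out the verification, so the claim that $\mu_r(P)$ is a fork with point of return $r$ is not actually established.

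Concretely, what must be supplied (and what the paper supplies) is: (i) by the fourth bullet $k$ and $k'$ land on opposite sides of $r$ in $F=\mu_r(P)$, so every pair $i\in F^-(r)$, $j\in F^+(r)$ other than $\{k,k'\}$, and each of the sets $F^{\pm}(r)$, lies inside $F\setminus\{k\}$ or $F\setminus\{k'\}$, which inherit the needed inequalities and acyclicity from being $\mu_r$ of the fork $P\setminus\{k'\}$ and of the abundant acyclic quiver $P\setminus\{k\}$; (ii) for the remaining pair, since $k\to r\to k'$ or $k'\to r\to k$ in $P$, the number of arrows between $k$ and $k'$ in $F$ is at least $|p_{kr}p_{rk'}|-|p_{kk'}|$, and one must check that this exceeds both $|p_{kr}|$ and $|p_{rk'}|$; this is exactly where your ``inflation'' remark enters, because $|p_{kk'}|=|q_{kk'}|$ may be large, but in that orientation $|p_{rk'}|$ (or $|p_{kr}|$) already contains the summand $|p_{kr}||q_{kk'}|$ --- a short computation, but it has to be written down, and the paper does write it: there are at least $|p_{kr}p_{rk'}|-|p_{kk'}|>|p_{kr}|,|p_{rk'}|\geq 2$ arrows between $k$ and $k'$ in $\mu_r(P)$. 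Your worry that $\mu_r(P)\setminus\{k\}$ need not be a fork is legitimate --- it occurs exactly when $r$ is the unique source or sink of $Q$, and the paper in fact glosses over this by asserting both deletions are forks via Lemma~\ref{lem-fork-mutation} --- but it closes cheaply: in that case $P^-(r)$ (or $P^+(r)$) is just $\{k\}$, so $F^+(r)$ (or $F^-(r)$) equals $\{k\}$, no pair escapes the two deleted subquivers, and acyclicity of the large side still follows from $\mu_r(P\setminus\{k\})$ being abundant acyclic. As written, your proposal names the right ingredients but stops short of the estimate and the case analysis that constitute the proof of the fifth statement.
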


\begin{proof}
    Both the first and fourth statements follow from $Q_{k'}^k = \emptyset$ and $Q$ being acyclic.
    The second statement follows from Lemma \ref{lem-fork-mutation}, as $Q \setminus \{k\}$ and $Q \setminus \{k'\}$ are both abundant acyclic and neither $k$ or $k'$ are a sink or source.
    We need then only show the third and fifth statements, beginning with the third.
    Without loss of generality, we can restrict to proving both statements for $P$.
    
    The second statement implies that any cycle in $P \setminus \{k\}$ passes through $k'$, as $P \setminus \{k,k'\}$ must be acyclic.
    As such, there must be a path $i \to k' \to j$ in $P$ for vertices $i,j \neq k$.
    This implies that $j \to k \to i$ in $P$ by $P_{k'}^k = P_0 \setminus \{k,k'\}$.
    Hence $ i \to k \to j$ in $Q$, which implies that $i \to j$ in $P$ as $Q$ is acyclic.
    Thus we may bypass $k'$ in our cycle, contradicting $P \setminus \{k,k'\}$ being acyclic.
    Both $P \setminus \{k\}$ and $P' \setminus \{k'\}$ are then abundant acyclic quivers, proving the third statement.

    Finally, the second and third statements imply that $\mu_r(P) \setminus \{k\}$ and $\mu_r(P) \setminus \{k'\}$ are forks with common point of return $r$ by Lemma \ref{lem-fork-mutation} as $r \in P_{k'}^k$.
    We need only show that the 3-cycle formed from $r,k,$ and $k'$ satisfies the first fork condition to prove that $\mu_r(P)$ is a fork.
    However, since either $k \to r \to k'$ or $k' \to r \to k$ in $P$, we know that there are at least $|p_{kr}p_{rk'}| - |p_{kk'}| > |p_{kr}|, |p_{rk'}| \geq 2$ arrows between $k$ and $k'$ in $\mu_r(P)$, proving our result.
    Therefore $\mu_r(P)$ and $\mu_r(P')$ are both forks with point of return $r$ for all $r \neq k,k'$.
\end{proof}

We have an immediate corollary to Lemma \ref{lem-key-mutation-k} from $Q_{k'}^k = \emptyset$, which will be useful later.

\begin{Cor}
    Let $Q$ be a pre-fork with vertices $\{r,k,k'\}$ or a key with vertices $\{k,k'\}$.
    If 
    \[\begin{tikzcd}
        & i \\
        k & & k'
        \arrow[from=2-1,to=1-2, "a"]
        \arrow[from=1-2,to=2-3, "b"]
        \arrow[from=2-3,to=2-1]
    \end{tikzcd}
    \text{ or }
    \begin{tikzcd}
        & i \\
        k & & k'
        \arrow[from=1-2,to=2-1, "a"']
        \arrow[from=2-3,to=1-2, "b"']
        \arrow[from=2-1,to=2-3]
    \end{tikzcd}\]
    is a full subquiver of $\mu_k(Q)$, then $b \geq a + 2$.
    If the two quivers are full subquivers of $\mu_{k'}(Q)$, then $a \geq b + 2$.
\end{Cor}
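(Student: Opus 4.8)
The plan is to exploit two facts: mutation at $k$ is an involution, and $\mu_k$ commutes with passage to a full subquiver whose vertex set contains $k$ (mutation is local). Together these transport the hypothesized directed triangle out of $\mu_k(Q)$ and back into $Q$ itself, where the rigid structure of a key / pre-fork — in particular the second clauses of Definitions~\ref{def-keys} and~\ref{def-pre-fork} and the abundance of $Q\setminus\{k\}$ — will force the multiplicity inequality.

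Since Definitions~\ref{def-keys} and~\ref{def-pre-fork} are symmetric under exchanging $k$ and $k'$, it suffices to show that if either displayed triangle is a full subquiver of $\mu_k(Q)$ then $b\ge a+2$; swapping $k\leftrightarrow k'$ (which also interchanges $a$ and $b$) then yields the statement for $\mu_{k'}(Q)$. So write $P=\mu_k(Q)$, let $T$ be the triangle in question — a full subquiver of $P$ on $\{i,k,k'\}$ — and let $c\ge 1$ be the number of arrows between $k$ and $k'$ in $T$. Because $k\in\{i,k,k'\}$, the full subquiver $R$ of $Q=\mu_k(P)$ on $\{i,k,k'\}$ equals $\mu_k(T)$. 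Running the mutation rule on the triangle $T$: the $k$--$k'$ edge is reversed and keeps $c$ arrows, the $k$--$i$ edge is reversed and keeps $a$ arrows, and the unique length-two path through $k$ in $T$ (namely $k'\to k\to i$ in the first form, $i\to k\to k'$ in the second) deposits $ca$ arrows on the $i$--$k'$ edge pointing against the $b$ arrows that were there; so $R$ contains $i\to k$ (first form) or $k\to i$ (second form), and its $i$--$k'$ edge carries $b$ arrows one way and $ca$ the other.

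Now $R$ is a full subquiver of the key (resp. pre-fork) $Q$ with distinguished vertices $k,k'$, so by the second clause of Definition~\ref{def-keys} (resp.~\ref{def-pre-fork}) the relation of $i$ to $k$ matches its relation to $k'$: from $i\to k$ in $R$ we get $i\to k'$ in $R$ (first form), and from $k\to i$ in $R$ we get $k'\to i$ in $R$ (second form). In either case the $i$--$k'$ edge of $R$ points the way the $ca$ new arrows do, so it survives with $b-ca>0$ arrows. But $R\setminus\{k\}$ is a full subquiver of $Q\setminus\{k\}$, which is abundant (abundant acyclic in the key case by Definition~\ref{def-keys}, and a fork — hence abundant by Definition~\ref{def-forks} — in the pre-fork case by Definition~\ref{def-pre-fork}); therefore $b-ca\ge 2$, and since $c\ge 1$ we conclude $b\ge ca+2\ge a+2$. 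Exchanging $k$ and $k'$ gives $a\ge b+2$ for triangles lying in $\mu_{k'}(Q)$. The one place to be careful is the orientation bookkeeping in the two forms: one must confirm that the length-two path through $k$ genuinely exists, that it adds its $ca$ arrows to the $i$--$k'$ edge opposite to the existing $b$, and that after mutation $i$ sits on the recorded side of $k$ — it is this last point, fed into the defining condition of a key / pre-fork, that upgrades the crude bound $|b-ca|\ge 2$ to the one-sided $b\ge ca+2$.
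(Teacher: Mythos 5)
Your proof is correct and takes essentially the route the paper leaves implicit (the corollary is stated as immediate from the mutation computation together with $Q_{k'}^k=\emptyset$ and the abundance of $Q\setminus\{k\}$, resp.\ $Q\setminus\{k'\}$): pulling the triangle back into $Q$ via involutivity and locality of mutation, the second clause of Definition~\ref{def-keys} / \ref{def-pre-fork} pins down the direction of the $i$--$k'$ edge of $Q$, and abundance gives $b-ca\ge 2$, hence $b\ge a+2$. One wording slip worth fixing: the surviving $i$--$k'$ edge in $Q$ points \emph{against} the $ca$ newly created arrows (i.e., in the direction of the original $b$ arrows), not ``the way the $ca$ new arrows do''; your formula $b-ca>0$ and the bound $b-ca\ge 2$ are the correct ones, so the argument stands as written.
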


Note that Lemmata \ref{lem-key-mutation} and \ref{lem-key-mutation-k} show that mutation works in essentially the same way for keys as it does for pre-forks.
We need only take care of the case that the vertices $k$ and $k'$ are sinks or sources.
As sinks or sources cannot appear in forks, we do not need a similar lemma for pre-forks.

\begin{Lem} \label{lem-key-k-sink-source-mutation}
    Let $Q$ be a key with vertices $\{k,k'\}$ such that $k$ is a sink or source.
    \begin{itemize}
        \item If $R = \mu_k(Q)$, then $R$ is an acyclic quiver such that $R_{k'}^k = R_0 \setminus \{k,k'\}$ and $\mu_r(R)$ is a fork with point of return $r$ for all $r \neq k,k'$. 

        \item If $P = \mu_{[k,k']}(Q)$, then $P$ is a key with vertices $\{k,k'\}$ and for all $i,j \neq k,k' \in Q_0$ we have that $p_{ik} = -q_{ik}$, that $p_{ik'} = -q_{ik'}$, and that $p_{kk'} = q_{kk'}$.

        \item If $k'$ is not a sink or a source in $Q$, then $R' = \mu_{k'}(Q)$ is a non-acyclic quiver such that $(R')_{k'}^k = (R')_0 \setminus \{k,k'\}$ and $\mu_r(R')$ is a fork with point of return $r$ for all $r \neq k,k'$. 

        \item If $k'$ is not a sink or a source in $Q$, then $P' = \mu_{[k',k]}(Q)$ is a non-acyclic quiver such that $(P')_{k'}^k = (P')_0 \setminus \{k,k'\}$ and $\mu_r(P')$ is a fork with point of return for all $r \neq k,k'$.
    \end{itemize}
    Additionally, for all four of the quivers the number of arrows between vertices $i$ and $j$ for $i,j \neq k,k' \in Q_0$ is fixed.
\end{Lem}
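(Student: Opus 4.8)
The plan is to reduce everything to the case that $k$ is a sink. Reversing every arrow of $Q$ yields the opposite quiver $Q^{\mathrm{op}}$; this operation commutes with mutation and carries abundant acyclic quivers, keys, forks and pre-forks to quivers of the same type with the same points of return, and it turns a key in which $k$ is a source into one in which $k$ is a sink. So assume $k$ is a sink. Then the second axiom of a key forces $i \to k$ and $i \to k'$ for every $i \in S := Q_0 \setminus \{k,k'\}$, and $k$ being a sink forces $q_{kk'} \le 0$. Since mutation at a sink or a source creates no arrows (there are no length-two paths through such a vertex) and merely reverses the incident arrows, one reads off the four quivers: $R = \mu_k(Q)$ has $k \to i$, $i \to k'$ for all $i \in S$, with $k \to k'$ or no $k$–$k'$ arrow; $P = \mu_{k'}(R) = \mu_{[k,k']}(Q)$, obtained by reversing the arrows at the sink $k'$ of $R$, has $k \to i$, $k' \to i$ for all $i \in S$ and $p_{kk'} = q_{kk'}$. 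When $k'$ is not a sink or source (which forces $q_{kk'} < 0$, i.e.\ $k' \to k$), the quiver $R' = \mu_{k'}(Q)$ reverses the arrows at $k'$ and, from the paths $i \to k' \to k$, gains arrows $i \to k$, so it has $k' \to i$, $i \to k$ for all $i \in S$ and $k \to k'$; and $P' = \mu_k(R')$ reverses the arrows at $k$ in $R'$ and, from the paths $i \to k \to k'$, gains arrows $i \to k'$ which partly cancel the $k' \to i$ arrows — using the multiplicity lower bounds inherited from the abundant quiver $Q$ one checks the net result is $i \to k'$ with at least two arrows — so $P'$ has $k \to i$, $i \to k'$ for all $i \in S$ and $k' \to k$.

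\textbf{Bookkeeping parts.} From these descriptions the non-fork assertions are immediate. No mutation in any of the four sequences ever creates an arrow between two vertices of $S$ (mutation at a sink or source creates nothing; the arrows created when mutating $Q$ at $k'$ all touch $k$, and those created when mutating $R'$ at $k$ all touch $k'$), which is the final sentence of the statement. Along $[k,k']$ each of $k$ and $k'$ has its incident arrows reversed exactly once with no new arrows, giving $p_{ik} = -q_{ik}$, $p_{ik'} = -q_{ik'}$, $p_{kk'} = q_{kk'}$. The quiver $P$ is a key because $P \setminus \{k\}$ and $P \setminus \{k'\}$ are mutations of abundant acyclic quivers at a sink or source, hence abundant acyclic, while $P^k_{k'} = \emptyset$ follows from $k \to i \Leftrightarrow k' \to i$. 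Also $R$ is acyclic since $R \setminus \{k\}$ is abundant acyclic and $k$ is a source of $R$, and $R^k_{k'} = S$ because each $i \in S$ has $k \to i$ and $i \to k'$; the $3$-cycles $k \to k' \to i \to k$ in $R'$ and $k \to i \to k' \to k$ in $P'$ show those quivers are not acyclic, and $(R')^k_{k'} = (P')^k_{k'} = S$ follow likewise.

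\textbf{The fork step.} The heart of the proof is to show, for each $X \in \{R, R', P'\}$ and each $r \in S$, that $\mu_r(X)$ is a fork with point of return $r$; I would treat the three cases uniformly. Each such $X$ has $X \setminus \{k\}$ and $X \setminus \{k'\}$ abundant acyclic and, for every $i \in S$, either ($k \to i$, $i \to k'$) or ($k' \to i$, $i \to k$) uniformly in $i$; after swapping $k \leftrightarrow k'$ if necessary, assume the former. Let $s$ and $t$ be the source and sink of the abundant acyclic quiver $X \setminus \{k,k'\}$; then $X \setminus \{k\}$ has source $s$ and sink $k'$, and $X \setminus \{k'\}$ has source $k$ and sink $t$. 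For $r \in S$, mutation at $r$ commutes with deleting $k$ and with deleting $k'$, so $\mu_r(X) \setminus \{k\} = \mu_r(X \setminus \{k\})$ and $\mu_r(X) \setminus \{k'\} = \mu_r(X \setminus \{k'\})$; by Lemma~\ref{lem-fork-mutation} the first is a fork with point of return $r$ when $r \ne s$ and the second is when $r \ne t$. Since the orderings of $X \setminus \{k\}$ and $X \setminus \{k'\}$ both restrict to the unique ordering of $X \setminus \{k,k'\}$, we get $s = \min S$ and $t = \max S$, so $s \ne t$ as soon as $|Q_0| \ge 4$; hence for every $r \in S$ at least one of these two full subquivers of $\mu_r(X)$, obtained by deleting a vertex other than $r$, is a fork with point of return $r$, and both are when $r \notin \{s,t\}$. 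Moreover $\mu_r(X)$ contains the $3$-cycle on $\{r,k,k'\}$ (the path $k \to r \to k'$ produces at least four arrows between $k$ and $k'$ after mutation, which survive cancellation against the $k$–$k'$ arrows of $X$ — a short per-case check), so $\mu_r(X)$ is abundant and not acyclic. To upgrade such a fork subquiver to a fork structure on all of $\mu_r(X)$ I would argue as in the last paragraph of the proof of Lemma~\ref{lem-key-mutation-k}: the induced subquivers on $\mu_r(X)^-(r)$ and $\mu_r(X)^+(r)$ are acyclic, because $k'$ is a sink of the first and $k$ a source of the second (so neither lies on a cycle there) while deleting it lands inside the fork subquiver; and the inequalities $f_{ji} > f_{ir}$, $f_{ji} > f_{rj}$ hold for pairs inside $S$ because $\mu_r(X) \setminus \{k,k'\}$ is a fork, and for pairs involving $k$ or $k'$ by a direct count in which the product $f_{kr}f_{rk'}$ dominates. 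The remaining case $|Q_0| = 3$, where $X$ has vertices $\{k,k',r\}$, is a one-line direct computation.

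\textbf{Main obstacle.} The delicate point is precisely this last step. First, the abundance of $\mu_r(X)$: the $k$–$k'$ multiplicity must not be annihilated by $2$-cycle cancellation, which holds only because the multiplicities coming from the original key $Q$ are at least $2$ in the places that matter, and this forces the per-case arithmetic. Second, the passage from a fork on one vertex-deleted full subquiver of $\mu_r(X)$ (the deleted vertex being different from $r$) to a fork on all of $\mu_r(X)$; this uses the acyclicity of the two neighbourhood subquivers of $r$ and the fork multiplicity inequalities, which I would verify by the same ``bypass $k$ or $k'$ using $\mu_r(X)^k_{k'} = S$'' reasoning that appears in the proofs of Lemmas~\ref{lem-pre-fork-subquivers} and~\ref{lem-key-mutation-k}. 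The genuinely new wrinkle, absent from those lemmas, is that here $X \setminus \{k\}$ and $X \setminus \{k'\}$ are abundant acyclic rather than forks, so each has a source and a sink and produces an exceptional vertex ($s$ or $t$) for which the corresponding deleted subquiver fails to be a fork; this is handled by the observation $s \ne t$ when $|Q_0| \ge 4$, together with the explicit $|Q_0| = 3$ base case.
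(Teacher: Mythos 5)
Your proposal is correct and takes essentially the same route as the paper: explicit sink/source bookkeeping for the four quivers, followed by verifying that $\mu_r(X)$ is a fork by combining Lemma \ref{lem-fork-mutation} applied to the vertex-deleted abundant acyclic subquivers with a direct count of the $k$--$k'$ arrow multiplicity and the acyclicity of $\mu_r(X)^{+}(r)$ and $\mu_r(X)^{-}(r)$; indeed you handle the exceptional vertices (where $r$ is the source or sink of $X\setminus\{k\}$ or $X\setminus\{k'\}$, so Lemma \ref{lem-fork-mutation} gives nothing for that deleted subquiver) more carefully than the paper does. One small slip: for pairs inside $S$ you invoke ``$\mu_r(X)\setminus\{k,k'\}$ is a fork,'' which can fail when $r$ is the source or sink of $X\setminus\{k,k'\}$, but this is harmless since those pairs already lie in whichever of $\mu_r(X)\setminus\{k\}$, $\mu_r(X)\setminus\{k'\}$ you have shown to be a fork with point of return $r$.
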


\begin{proof}
    Mutation at $k$ gives us an acyclic quiver $R = \mu_k(Q)$ such that $R_{k'}^k = R_0 \setminus \{k,k'\}$.
    If $r \neq k,k'$, then $\mu_r(R) \setminus \{k\}$ and $\mu_r(R) \setminus \{k'\}$ will both be forks with point of return $r$ by Lemma \ref{lem-fork-mutation}, as $R \setminus \{k\}$ and $R \setminus \{k'\}$ are abundant acyclic quivers.
    Since $r \in R_{k'}^k$ and $k$ was a sink or source in $Q$, the number of arrows between $k$ and $k'$ in $\mu_r(R)$ is greater than both $|q_{k'r}|$ and $|q_{kr}|$.
    As $\mu_r(R) \setminus \{k,r\}$ and $\mu_r(R) \setminus \{k', r\}$ must be abundant acyclic with $r \in R_{k'}^k$, we get that $\mu_r(R)^+(r)$ and $\mu_r(R)^-(r)$ are both acyclic quivers.
    Hence, the quiver $\mu_r(R)$ must be a fork for any $r \neq k,k'$.
    
    From Corollary \ref{cor-key-source-sink}, we know that $k'$ must be the same kind of sink or source in $R$ as $k$ was in $Q$.
    Thus $P = \mu_{[k,k']}(Q)$ is a key with vertices $\{k,k'\}$.
    We also have that $p_{ik} = -q_{ik}$, that $p_{ik'} = -q_{ik'}$, and that $p_{kk'} = q_{kk'}$ for all $i,j \neq k,k' \in Q_0$, as we have only mutated at sinks or sources.

    If $k'$ is neither a sink or a source in $Q$, then we know that $|q_{kk'}| \geq 1$ by Corollary \ref{cor-key-source-sink}.
    Then mutation at $k'$ of $Q$ adds $|q_{k'i}|$ arrows between $k$ and $i$ without changing the direction for every vertex $i \neq k,k'$.
    Thus $R' = \mu_{k'}(Q)$ will be a non-acyclic quiver such that $(R')_{k'}^k = R_0' \setminus \{k,k'\}$.
    Additionally, this implies that $R \setminus \{k\}$ and $R \setminus \{k'\}$ are both abundant acyclic quivers, as no arrows changed direction in $Q \setminus \{k'\}$.
    A similar argument as for $R = \mu_k(Q)$ then shows that $\mu_r(R')$ is a fork with point of return $r$ for all $r \neq k,k'$.

    Since $|r'_{ki}| > |r'_{ik'}|$ for all vertices $i \neq k,k'$, mutating further at $k$ will change the direction of arrows between $k'$ and $i$, adding $|r'_{ki}|$ arrows in the opposite direction.
    Thus $(P')_{k'}^k = P_0 \setminus \{k,k'\}$ for $P' = \mu_{[k',k]}(Q)$.
    Furthermore, we must have that both $P' \setminus \{k'\}$ and $P' \setminus \{k\}$ are abundant acyclic.
    The first is a consequence of $R' \setminus \{k'\}$ being abundant acyclic.
    The second comes from the fact that arrows between $k'$ and $i$ point in the same direction they did as in $Q$, having swapped direction twice while mutating.
    As $Q \setminus \{k\}$ was abundant acyclic, this naturally implies that $P' \setminus \{k\}$ is abundant acyclic.
    Then our previous argument again shows that $\mu_r(P')$ must be a fork with point of return for all $r \neq k,k'$.
    Finally, since we're always mutating at sinks, sources, or vertices that are a sink or source except for the arrow between $k$ and $k'$, the number of arrows between vertices $i$ and $j$ for $i,j \neq k,k' \in Q_0$ is fixed for all four quivers.
\end{proof}

Since mutation from pre-forks and keys are very similar, we can define a class of quivers that arise after mutating at their $k$ and $k'$ vertices.
These were first defined by Warkentin for pre-forks with $q_{kk'} = 0$ \cite[Definition 6.3]{warkentin_exchange_2014} and used quivers with polynomials as weights on the arrows for $|q_{kk'}| = 1$, but our definition shall hold in the case that $|q_{kk'}| = 1$.
Since all keys are abundant acyclic quivers and all pre-forks are forks when $|q_{kk'}| \geq 2$, we do not concern ourselves with the case $|q_{kk'}| \geq 2$.

\begin{Def} \label{def-wing}
    Let $W$ be a quiver.
    We say that $W$ is a \textit{wing} with point of return $k$ if it satisfies the following:
    \begin{itemize}
        \item $|w_{kk'}| < 2$ for some vertex $k'$.

        \item $W_{k'}^k = W_0 \setminus \{k,k'\}$.
    
        \item $W \setminus \{k'\}$ is a fork with point of return $k$.
        
        \item $W \setminus \{k\}$ is abundant acyclic.

        \item If one of
    \[\begin{tikzcd}
        & i \\
        k & & k'
        \arrow[from=2-1,to=1-2, "a"]
        \arrow[from=1-2,to=2-3, "b"]
        \arrow[from=2-3,to=2-1]
    \end{tikzcd}
    \text{ or }
    \begin{tikzcd}
        & i \\
        k & & k'
        \arrow[from=1-2,to=2-1, "a"']
        \arrow[from=2-3,to=1-2, "b"']
        \arrow[from=2-1,to=2-3]
    \end{tikzcd}\]
    is a full subquiver of $W$ with $a, b > 0$, then $b \geq a + 2$.
    \end{itemize} 
\end{Def}

\begin{Cor} \label{cor-wing-pre-fork-key}
    Let $Q$ be a pre-fork with vertices $\{r,k,k'\}$ or a key with vertices $\{k,k'\}$.
    If $k$ and $k'$ are not sinks or sources, then $\mu_k(Q)$ and $\mu_{k'}(Q)$ are both wings with respective point of return $k$ and $k'$.
\end{Cor}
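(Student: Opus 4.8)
The plan is to simply unwind Definition~\ref{def-wing} and verify each of its five bullets for $W = \mu_k(Q)$; the assertion for $\mu_{k'}(Q)$ then follows by the mirror-image argument, exchanging the roles of $k$ and $k'$. The bulk of this is bookkeeping, since Lemma~\ref{lem-pre-fork-mutation-k} (for $Q$ a pre-fork) and Lemma~\ref{lem-key-mutation-k} (for $Q$ a key) were phrased as deliberate analogues of one another precisely so that their conclusions can be read off here. The one hypothesis to watch is that $k$ and $k'$ are neither sinks nor sources: this is free when $Q$ is a pre-fork, since a fork has no sink or source and both $Q \setminus \{k\}$ and $Q \setminus \{k'\}$ are forks, whereas for a key it is exactly the standing assumption needed to invoke Lemma~\ref{lem-key-mutation-k}. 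Throughout I also use the convention, in force since the discussion preceding Definition~\ref{def-wing}, that $|q_{kk'}| < 2$ (otherwise $Q$ is an abundant acyclic quiver or a fork and $\mu_k(Q)$ is a fork, not a wing).

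I would then take the bullets in order. Bullets two, three, and four -- namely $W_{k'}^k = W_0 \setminus \{k,k'\}$, that $W \setminus \{k'\}$ is a fork with point of return $k$, and that $W \setminus \{k\}$ is abundant acyclic -- are, respectively, three of the bullet-point conclusions of Lemma~\ref{lem-pre-fork-mutation-k} when $Q$ is a pre-fork and of Lemma~\ref{lem-key-mutation-k} when $Q$ is a key (taking $W = \mu_k(Q)$, and noting $W_0 = Q_0$ since mutation fixes the vertex set). Bullet five -- the inequality $b \ge a + 2$ for either of the two oriented triangles on a vertex set $\{k,i,k'\}$ occurring as a full subquiver of $W$ -- is exactly the content of the (unlabeled) Corollary stated immediately after Lemma~\ref{lem-key-mutation-k}, which was formulated for $\mu_k$ of a pre-fork or a key with this application in mind.

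For the first bullet, $|w_{kk'}| < 2$, I would make the short direct observation that mutating at $k$ only reverses the orientation of the arrows already present between $k$ and $k'$: every arrow produced by the mutation step at $k$ comes from a path $a \to k \to b$ in $Q$ and hence has an endpoint other than $k$, and the $2$-cycle cancellation step cannot alter the $k$-to-$k'$ multiplicity either. Thus $|w_{kk'}| = |q_{kk'}| < 2$. Collecting the five bullets shows $\mu_k(Q)$ is a wing with point of return $k$, and the symmetric argument gives that $\mu_{k'}(Q)$ is a wing with point of return $k'$. I do not anticipate a genuine obstacle: the only care needed is in managing the two case splits (pre-fork versus key, and $k$ versus $k'$) and in the elementary check that mutation at $k$ leaves the number of arrows between $k$ and $k'$ unchanged.
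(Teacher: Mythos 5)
Your proposal is correct and matches the paper's intended reasoning: the corollary is stated there without proof precisely because bullets two through five of Definition~\ref{def-wing} are read off from Lemmata~\ref{lem-pre-fork-mutation-k} and \ref{lem-key-mutation-k} and the unlabeled corollary following them, exactly as you do. Your explicit check that $|w_{kk'}| = |q_{kk'}| < 2$ (under the standing convention $|q_{kk'}| < 2$) and your remark that the sink/source hypothesis is automatic for pre-forks are accurate and consistent with the paper's surrounding discussion.
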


As before, we must explore what happens when we mutate a wing not at its point of return, where the proof for the $w_{kk'} = 0$ case has already been done by Warkentin.

\begin{Lem}{\textup{\cite[Lemmata 6.4 and 6.7]{warkentin_exchange_2014}}}\label{lem-wing-mutation-0}
    Let $W$ be a wing with point of return $k$ and $w_{kk'} =0$.
    If $r \neq k,k'$, then $\mu_r(W)$ is a fork with point of return $r$.
    If we mutate at $k'$, then $Q = \mu_{k'}(W)$ satisfies the following:
    \begin{itemize}
        \item $Q \setminus \{k\}$ is a fork with point of return $k'$.

        \item $Q_{k'}^k = \emptyset$.

        \item $Q \setminus \{k'\}$ is a fork with point of return $k$.

        \item $|Q_1| > |W_1|$.

        \item For $r \neq k,k'$, we have that $P = \mu_r(Q)$ is a pre-fork with vertices $\{r,k,k'\}$, where $|P_1| > |Q_1|$.
    \end{itemize}
\end{Lem}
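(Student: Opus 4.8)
The plan is to mirror the proofs of Lemmas~\ref{lem-key-mutation}, \ref{lem-key-mutation-k}, and~\ref{lem-key-k-sink-source-mutation}: reduce every assertion to the basic fork facts (Lemmas~\ref{lem-fork-acyclic} and~\ref{lem-fork-mutation}) applied to the restrictions $W\setminus\{k\}$ and $W\setminus\{k'\}$, and exploit the defining property $W_{k'}^k = W_0\setminus\{k,k'\}$ to keep track of the arrows between $k$ and $k'$. The first thing I would record is that $W_{k'}^k = W_0\setminus\{k,k'\}$ means every vertex $i\neq k,k'$ lies on a directed $2$-path $k\to i\to k'$ or $k'\to i\to k$ in $W$; call these \emph{type~I} and \emph{type~II} vertices. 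Since $W\setminus\{k'\}$ is a fork with point of return $k$ it is not acyclic, which forces both $(W\setminus\{k'\})^{-}(k)$ and $(W\setminus\{k'\})^{+}(k)$ to be nonempty (otherwise $k$ would be a source or a sink and the fork would be acyclic); equivalently $W$ has at least one type~I and one type~II vertex, so $k'$ — which points to every type~II vertex and receives from every type~I vertex — is neither a source nor a sink of $W$, nor of $W\setminus\{k\}$.

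For the mutation at $r\neq k,k'$ I would assume without loss of generality that $r$ is type~I, so $k\to r\to k'$ (the type~II case is symmetric in spirit). Then $r$ is not a source of $W\setminus\{k'\}$ (it has $k\to r$), and not a sink of $W\setminus\{k'\}$ either, since the big-arrow condition of Definition~\ref{def-forks} applied to $r\in(W\setminus\{k'\})^{+}(k)$ and any $i\in(W\setminus\{k'\})^{-}(k)\neq\emptyset$ produces an arrow $r\to i$. Because $r\neq k'$, mutation at $r$ commutes with deleting $k'$, so Lemma~\ref{lem-fork-mutation} gives that $\mu_r(W)\setminus\{k'\}=\mu_r(W\setminus\{k'\})$ is a fork with point of return $r$ (and $\mu_r(W)\setminus\{k\}$ is abundant acyclic or a fork with point of return $r$, by the same commutation). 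I would then add $k'$ back and verify Definition~\ref{def-forks} for $\mu_r(W)$ with point of return $r$: the $3$-cycle $r\to k\to k'\to r$ shows it is not acyclic; abundance between $k'$ and every other vertex survives because the at-least-two arrows present in $W$ are only reinforced, never cancelled, by the shortcuts through $r$, and the pair $k,k'$ acquires the shortcut $k\to k'$ of multiplicity $w_{kr}w_{rk'}\geq 4$; and the two acyclicity conditions for $\mu_r(W)^{+}(r)$ and $\mu_r(W)^{-}(r)$ follow from the corresponding condition for the fork $\mu_r(W\setminus\{k'\})$ together with the observation that in $W\setminus\{k'\}$ — hence also after mutating at $r$ — no arrow runs from a type~II vertex to a type~I vertex, so inserting $k'$, which only receives from type~I and only emits to type~II vertices, cannot close a cycle.

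For the mutation at $k'$ I would first invoke Lemma~\ref{lem-fork-mutation} (abundant acyclic case) at the vertex $k'$, which is neither a source nor a sink of $W\setminus\{k\}$, to get that $Q\setminus\{k\}=\mu_{k'}(W\setminus\{k\})$ is a fork with point of return $k'$. Deleting $k'$ instead, $Q\setminus\{k'\}$ is the fork $W\setminus\{k'\}$ with the shortcut arrows $j\to i$ added for each type~I vertex $j$ and type~II vertex $i$ (from the path $j\to k'\to i$); these are parallel to the already-large arrows of the fork and lie between $(W\setminus\{k'\})^{+}(k)$ and $(W\setminus\{k'\})^{-}(k)$, so $Q\setminus\{k'\}$ is again a fork with point of return $k$ and $|Q_1|>|W_1|$. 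Tracking directions then shows $q_{kk'}=w_{kk'}=0$, that each type~I vertex $i$ now satisfies $k\to i$ and $k'\to i$ while each type~II vertex satisfies $i\to k$ and $i\to k'$, and hence $Q_{k'}^k=\emptyset$. Finally, $Q$ still has the feature that $k$ and $k'$ agree on every other vertex, so for $r\neq k,k'$ the vertex $r$ again lies on $2$-paths to both; running the mutation-at-$r$ argument with $Q\setminus\{k\}$ and $Q\setminus\{k'\}$ (both forks) in place of $W\setminus\{k\}$ and $W\setminus\{k'\}$ gives that $\mu_r(Q)\setminus\{k\}$ and $\mu_r(Q)\setminus\{k'\}$ are forks with common point of return $r$, while $Q_{k'}^k=\emptyset$ forbids any $2$-path between $k$ and $k'$ through $r$ and so $\mu_r(Q)_{k'}^k=\emptyset$; thus $P=\mu_r(Q)$ is a pre-fork with vertices $\{r,k,k'\}$, and $|P_1|>|Q_1|$ because $|(P\setminus\{k'\})_1|>|(Q\setminus\{k'\})_1|$ by Lemma~\ref{lem-fork-mutation} while the number of arrows meeting $k'$ does not decrease.

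The hard part will not be the logical skeleton, which is almost a transcription of the key lemmas, but the multiplicity-and-direction bookkeeping: confirming that no shortcut ever reverses a ``big'' arrow and, above all, that both induced subquivers $\mu_r(W)^{+}(r)$ and $\mu_r(W)^{-}(r)$ remain acyclic once $k'$ is reinserted. Since the statement is Warkentin's \cite[Lemmata~6.4 and~6.7]{warkentin_exchange_2014}, one could instead simply cite it; reproving it keeps the exposition self-contained and reuses the machinery of Section~\ref{sec-keys-pre-forks} essentially verbatim.
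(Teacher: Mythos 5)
The paper never proves this lemma --- it is quoted with a citation to Warkentin --- so the only internal benchmark is the paper's proof of the $|w_{kk'}|=1$ analogue, Lemma~\ref{lem-wing-mutation-1}, and your skeleton does follow that template faithfully: reduce to Lemmata~\ref{lem-fork-acyclic} and~\ref{lem-fork-mutation} on $W\setminus\{k\}$ and $W\setminus\{k'\}$, use the commutation of $\mu_r$ with deleting a vertex $\neq r$, and track $W_{k'}^k=W_0\setminus\{k,k'\}$. Your observations that both vertex types are nonempty, that $k'$ is neither a source nor a sink, that no arrow runs from a type~II to a type~I vertex, and the computation of $Q\setminus\{k'\}$ and of $Q_{k'}^k=\emptyset$ after $\mu_{k'}$ are all correct.

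The genuine gap is in the last bullet. You write that ``$Q_{k'}^k=\emptyset$ forbids any $2$-path between $k$ and $k'$ through $r$ and so $\mu_r(Q)_{k'}^k=\emptyset$.'' The hypothesis only gives $p_{kk'}=q_{kk'}=0$; it says nothing about the other vertices. A priori, mutation at $r$ could reverse the arrows between $k$ and some $v\neq r$ (via a $2$-path through $r$) while leaving the arrows between $k'$ and $v$ unreversed, which would put $v$ into $\mu_r(Q)_{k'}^k$ and destroy the pre-fork condition. Excluding this requires the fork inequalities of $Q\setminus\{k\}$ and $Q\setminus\{k'\}$ (e.g.\ if $v\to k$, $v\to k'$, $k\to r$, $k'\to r$, $r\to v$, then $q_{rv}>q_{vk},q_{kr}$ and $q_{rv}>q_{vk'},q_{k'r}$ force \emph{both} arrows to flip), and this is precisely the part of the paper's proof of the fifth statement of Lemma~\ref{lem-wing-mutation-1} that analyzes the configurations $k\to v\to k'$ through $3$-cycles on $\{r,k,v\}$ and $\{r,k',v\}$; it cannot be waved through. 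A smaller instance of the same looseness occurs in your first paragraph: to verify Definition~\ref{def-forks} for $\mu_r(W)$ you must check, for the pairs involving $k'$, the inequalities $f_{jk'}>f_{k'r}$ and $f_{jk'}>f_{rj}$ (abundance alone is not the fork condition); they do hold, since $f_{jk'}\geq w_{jr}w_{rk'}\geq 2\max(w_{jr},w_{rk'})$, but this estimate is part of the bookkeeping you defer rather than supply. With those two points filled in, the argument goes through; alternatively, as you note, one can simply cite Warkentin as the paper does.
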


\begin{Lem} \label{lem-wing-mutation-1}
    Let $W$ be a wing with point of return $k$ and $|w_{kk'}| = 1$.
    If $r \neq k,k'$, then $\mu_r(W)$ is a fork with point of return $r$.
    If we mutate at $k'$, then $Q = \mu_{k'}(W)$ satisfies the following:
    \begin{itemize}
        \item $Q \setminus \{k\}$ is a fork with point of return $k'$.

        \item $Q_{k'}^k = Q^-(k') \neq \emptyset$ if $q_{k'k} =  w_{kk'} = 1$ and $Q_{k'}^k = Q^+(k') \neq \emptyset$ if $q_{kk'} = w_{k'k} = 1$.

        \item $Q \setminus \{k'\}$ is abundant acyclic.
        Additionally, if $q_{k'k} = 1$, then $k$ is a source in $Q \setminus \{k'\}$, and if $q_{kk'} = 1$, then $k$ is a sink in $Q \setminus \{k'\}$.

        \item $|Q_1| > |W_1|$.

        \item For $r \in Q_{k'}^k$, we have that $F = \mu_r(Q)$ is a fork with point of return $r$.
        For $r \notin Q_{k'}^k$, we have that $F$ is a pre-fork with vertices $\{r,k,k'\}$.
        This forces $|F_1| > |Q_1|$ in either case.

        \item If one of
    \[\begin{tikzcd}
        & i \\
        k & & k'
        \arrow[from=2-1,to=1-2, "a"]
        \arrow[from=1-2,to=2-3, "b"]
        \arrow[from=2-3,to=2-1]
    \end{tikzcd}
    \text{ or }
    \begin{tikzcd}
        & i \\
        k & & k'
        \arrow[from=1-2,to=2-1, "a"']
        \arrow[from=2-3,to=1-2, "b"']
        \arrow[from=2-1,to=2-3]
    \end{tikzcd}\]
    is a full subquiver of $Q$ with $a, b > 0$, then $b \geq a + 2$.
    \end{itemize}
\end{Lem}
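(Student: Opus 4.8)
The plan is to follow the template of Lemma~\ref{lem-key-mutation-k} and of Warkentin's Lemmata~6.4 and 6.7 (our Lemma~\ref{lem-wing-mutation-0}), carrying the one extra $k$--$k'$ arrow through. Since reversing every arrow of a quiver commutes with mutation and sends wings to wings, I may assume $w_{kk'}=1$ (a single arrow $k\to k'$); the assertions for $q_{kk'}=w_{k'k}=1$ follow by this reversal. Then $W_{k'}^k=W_0\setminus\{k,k'\}$ partitions the remaining vertices into $A=\{i:k\to i,\ i\to k'\}=(W\setminus\{k'\})^+(k)$ and $B=\{i:k'\to i,\ i\to k\}=(W\setminus\{k'\})^-(k)$, both nonempty because $W\setminus\{k'\}$ is a fork with point of return $k$ and forks have no source or sink. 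I will use: arrows between $A$ and $B$ point $A\to B$ with $w_{ji}>\max\{w_{ik},w_{kj}\}$ (fork condition in $W\setminus\{k'\}$); $w_{jk'},w_{k'i},w_{ji}\ge 2$ (abundance of $W\setminus\{k\}$); and $w_{k'i}\ge w_{ik}+2$ for $i\in B$ (the only applicable triangle of the wing definition, since $k\to k'$).

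Next I would compute $Q=\mu_{k'}(W)$: the only paths through $k'$ are $k\to k'\to i$ and $j\to k'\to i$ with $j\in A$, $i\in B$, so $Q$ has $k'\to k$, $k'\to j$, $i\to k'$, $k\to j$ unchanged, $k\to i$ with net count $w_{k'i}-w_{ik}\ge 2$, and $j\to i$ with $w_{ji}+w_{jk'}w_{k'i}$ arrows, nothing else changing. The structural bullets then drop out: $Q\setminus\{k'\}$ is abundant acyclic with source $k$ in the order $k\prec A\prec B$; $Q\setminus\{k\}$ is a fork with point of return $k'$, since $(Q\setminus\{k\})^+(k')=A$, $(Q\setminus\{k\})^-(k')=B$ and $w_{ji}+w_{jk'}w_{k'i}$ exceeds both $w_{k'i}$ and $w_{jk'}$; $Q_{k'}^k=B=Q^-(k')\neq\emptyset$; only the pairs $\{k,i\}$ and $\{j,i\}$ with $i\in B$ change their arrow count, and a short estimate (using $w_{ik}\le w_{k'i}-2$ and $w_{jk'}\ge 2$) shows the net change is positive, so $|Q_1|>|W_1|$; and the only applicable triangle of $Q$ is $k\to i$, $i\to k'$, $k'\to k$ for $i\in B$, where $b-a=w_{ik}\ge 2$.

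For the last bullet I split on $r\in Q_{k'}^k=B$ versus $r\in A$, the crux being what $\mu_r$ does at $k$ and $k'$. If $r\in B$, the path $k\to r\to k'$ in $Q$ adds $q_{kr}q_{rk'}\ge 4$ arrows $k\to k'$ against the one arrow $k'\to k$, so $\mu_r(Q)$ gains a $3$-cycle on $\{r,k,k'\}$ with $r\to k$, $k'\to r$. If $r\in A$, there is a path $k'\to r\to b$ for every $b\in B$, which reverses all $k'$--$B$ arrows and leaves the $k$--$k'$ arrow alone, so in $\mu_r(Q)$ one gets $r\to k$, $r\to k'$, every vertex joined to both $k$ and $k'$, and hence $\mu_r(Q)_{k'}^k=\emptyset$. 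In either case $\mu_r(Q)\setminus\{k\}=\mu_r(Q\setminus\{k\})$ is a fork with point of return $r$ (Lemma~\ref{lem-fork-mutation}: forks have no source or sink, $r\neq k'$), and $\mu_r(Q)\setminus\{k'\}=\mu_r(Q\setminus\{k'\})$ is a fork with point of return $r$ unless $r\in B$ is the unique sink of the abundant acyclic $Q\setminus\{k'\}$, in which case it is abundant acyclic and $\mu_r(Q)^-(r)$ shrinks to $\{k'\}$. For $r\in A$, this plus $\mu_r(Q)_{k'}^k=\emptyset$ (which also gives clause~(2) of the pre-fork definition) shows $\mu_r(Q)$ is a pre-fork with vertices $\{r,k,k'\}$. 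For $r\in B$ I would use a short assembly observation: if $X$ has distinct vertices $k,k'$ with $X\setminus\{k\}$, $X\setminus\{k'\}$ each a fork with point of return $v$ or abundant acyclic (at least one a fork), with $k\in X^+(v)$, $k'\in X^-(v)$, and with $\{k,k'\}$ satisfying the fork inequalities at $v$, then $X$ is a fork with point of return $v$ --- every fork inequality for $X$ at $v$ is either the $\{k,k'\}$ one, or a pair avoiding $k$ (inherited from $X\setminus\{k\}$), or a pair avoiding $k'$ (inherited from $X\setminus\{k'\}$), and $X^{\pm}(v)$ are acyclic because $k,k'$ lie in opposite halves. Applying this to $X=\mu_r(Q)$, $v=r$ with the $3$-cycle above settles $r\in B$. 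The count increase $|F_1|>|Q_1|$ follows since $|\mu_r(Q)\setminus\{k\}|_1>|Q\setminus\{k\}|_1$ while the arrows at $k$ only grow. The opening assertion ($\mu_r(W)$ a fork with point of return $r$, $r\neq k,k'$) is the same assembly observation: $\mu_r(W)\setminus\{k'\}$ is always a fork with point of return $r$, $\mu_r(W)\setminus\{k\}$ is a fork with point of return $r$ or abundant acyclic, and $r\in W_{k'}^k$ forces a $3$-cycle on $\{r,k,k'\}$ meeting the fork inequalities.

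I expect the main obstacle to be bookkeeping, not ideas: keeping the direction of the lone $k$--$k'$ arrow consistent across the two mutations, and the one genuinely new phenomenon --- that mutating at a vertex of $A$ (outside $Q_{k'}^k$) silently reverses every $k'$--$B$ arrow, which is exactly what forces $\mu_r(Q)_{k'}^k=\emptyset$ and hence a pre-fork rather than a quiver with a nonempty ``middle set.'' One also has to handle the degenerate sub-cases where $r$ is the source or sink of $Q\setminus\{k'\}$ or of $W\setminus\{k\}$, where one of the two defining subquivers is only abundant acyclic and one half of the prospective fork at $r$ collapses to a single vertex; the assembly observation is phrased to absorb these.
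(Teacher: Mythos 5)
Your proposal is correct and follows essentially the same route as the paper: compute $\mu_{k'}(W)$ explicitly, identify $Q_{k'}^k$, and split the final bullet on $r\in Q_{k'}^k$ versus $r\notin Q_{k'}^k$; your $A$/$B$ partition, the reversal reduction to $w_{kk'}=1$, and the explicit handling of the degenerate sink/source subcases are just tidier bookkeeping for the same computation (the paper works instead with local four-vertex subquivers and pulls the triangle condition back to $W$). One caution for the write-up: the ``assembly observation'' is false as literally stated when one of $X\setminus\{k\}$, $X\setminus\{k'\}$ is merely abundant acyclic (fork inequalities for pairs meeting the corresponding vertex are then not inherited), so you must state it with the extra hypothesis you in fact use in those degenerate cases, namely that the relevant half $X^{+}(v)$ or $X^{-}(v)$ has collapsed to the single vertex $k$ or $k'$.
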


\begin{proof}
    If $r \neq k,k'$, then $\mu_r(W)$ is a fork with point of return by the same argument used in the proof of Lemma \ref{lem-key-mutation-k}.
    We may then proceed with mutation at $k'$, with the first statement following from $W \setminus \{k\}$ being abundant acyclic and Lemma \ref{lem-fork-mutation}.

    Let $i \to k \to j$ be any path in $W$.
    Depending on the sign of $w_{kk'}$, we have one of the following two full subquivers of $W$:
    \[
    \begin{tikzcd}
        & i \\
        k & & k' \\
        & j
        \arrow[from=1-2,to=2-1, "w_{ik}"']
        \arrow[from=2-3,to=1-2, "w_{k'i}"']
        \arrow[from=3-2,to=1-2, "w_{ji}" description]
        \arrow[from=3-2,to=2-3, "w_{jk'}"']
        \arrow[from=2-1,to=2-3, bend right=15]
        \arrow[from=2-1,to=3-2, "w_{kj}"']
    \end{tikzcd}
    \text{ or }
    \begin{tikzcd}
        & i \\
        k & & k' \\
        & j
        \arrow[from=1-2,to=2-1, "w_{ik}"']
        \arrow[from=2-3,to=1-2, "w_{k'i}"']
        \arrow[from=3-2,to=1-2, "w_{ji}" description]
        \arrow[from=3-2,to=2-3, "w_{jk'}"']
        \arrow[from=2-3,to=2-1, bend left=15]
        \arrow[from=2-1,to=3-2, "w_{kj}"']
    \end{tikzcd}.\]
    Mutation at $k'$ will produce the respective following full subquivers of $Q = \mu_{k'}(W)$:
    \[
    \begin{tikzcd}
        & i \\
        k & & k' \\
        & j
        \arrow[from=2-1,to=1-2, "w_{k'i} - w_{ik}"]
        \arrow[from=1-2,to=2-3, "w_{k'i}"]
        \arrow[from=3-2,to=1-2, "w_{ji}+w_{jk'}w_{k'i}" description]
        \arrow[from=2-3,to=3-2, "w_{jk'}"]
        \arrow[from=2-3,to=2-1, bend left=15]
        \arrow[from=2-1,to=3-2, "w_{kj}"']
    \end{tikzcd}
    \text{ or}
    \begin{tikzcd}
        & i \\
        k & & k' \\
        & j
        \arrow[from=1-2,to=2-1, "w_{ik}"']
        \arrow[from=1-2,to=2-3, "w_{k'i}"]
        \arrow[from=3-2,to=1-2, "w_{ji}+w_{jk'}w_{k'i}" description]
        \arrow[from=2-3,to=3-2, "w_{jk'}"]
        \arrow[from=2-1,to=2-3, bend right=15]
        \arrow[from=3-2,to=2-1, "w_{jk'} - w_{kj}"]
    \end{tikzcd},\]
    as $w_{k'i} \geq w_{ik} + 2$ or $w_{jk'} \geq w_{kj} + 2$ respectively.
    This demonstrates that if $k \to v \to k'$ in $W$ for some vertex $v$ with $k \to k'$ or $k' \to k$, then $v \notin Q_{k'}^k$ or $v \in Q_{k'}^k$ respectively.
    Similarly, if $k' \to v \to k$ in $W$ for some vertex $v$ with $k \to k'$ or $k' \to k$, then $v \in Q_{k'}^k$ or $v \notin Q_{k'}^k$ respectively.
    Thus $Q_{k'}^k = W^+(k') \cap W^-(k)$ if $w_{kk'} = 1$ and $Q_{k'}^k = W^+(k) \cap W^-(k')$ if $w_{kk'} = -1$, proving most of the second statement.
    If $w_{kk'} = 1$, then $Q^-(k') = W^+(k') = W^-(k)$.
    If $w_{kk'} = -1$, then $Q^+(k') = W^-(k') = W^+(k)$.
    If $Q_{k'}^k$ did equal the empty set, then $k$ and $k'$ would both be a sink or a source in $W$, a contradiction.
    This completes the proof of the second statement.
    Additionally, since $w_{jk'}w_{k'i} \geq w_{ik}$ or $w_{jk'}w_{k'i} \geq w_{kj}$ respectively, there must be a positive number of arrows added to the subquivers after mutation at $k'$.
    As $i \to k \to j$ was arbitrary, this tells us that $|Q_1| > |W_1|$, proving the fourth statement.

    Continuing with the third statement, we already know that $Q \setminus \{k\}$ is a fork with point of return $k'$.
    Hence $Q \setminus \{k,k'\}$ is abundant acyclic.
    From the second statement, we know that $Q_{k'}^k = Q^-(k')$ if $w_{kk'} = 1$.
    This implies that for $i \neq k,k' \in Q_0$ we get the following two possible subquivers of $Q$:
    \[\begin{tikzcd}    
        & i \\
        k & & k'
        \arrow[from=2-3,to=2-1]
        \arrow[from=2-1,to=1-2, "q_{ki}"]
        \arrow[from=1-2,to=2-3, "q_{ik'}"]
    \end{tikzcd}
    \text{ or }
    \begin{tikzcd}    
        & i \\
        k & & k'
        \arrow[from=2-3,to=2-1]
        \arrow[from=2-1,to=1-2, "q_{ki}"]
        \arrow[from=2-3,to=1-2, "q_{ik'}'"']
    \end{tikzcd}.
    \]
    We can then immediately see that $k$ must be a source in $Q \setminus \{k'\}$.
    A similar argument shows that $k$ must be a sink in $Q \setminus \{k'\}$ if $w_{k'k} = 1$.
    Attaching a source or a sink to an already acyclic quiver produces another acyclic quiver.
    Hence $Q \setminus \{k'\}$ must be acyclic.
    Its abundance follows from the property concerning 3-cycles involving $k$ and $k'$ of wings, proving the third statement.

    To prove the fifth statement, let $F = \mu_r(Q)$ for $r \neq k,k'$.
    Then $F \setminus \{k\}$ and $F \setminus \{k'\}$ are both forks with $r$ as the point of return.
    As $Q \setminus \{k\}$ was a fork with point of return $k'$ and $Q \setminus \{k'\}$ was abundant acyclic, mutation at $r$ forces $|(F \setminus \{k\})_1| > |(Q \setminus \{k\})_1|$ and $|(F \setminus \{k'\})_1| > |(Q \setminus \{k'\})_1|$.
    Thus we need only show that $|f_{kk'}| \geq |q_{kk'}|$ to prove that $|F_1| > |Q_1|$.
    
    If $r \in Q_{k'}^k$, then $|f_{kk'}| > |f_{kr}|, |f_{rk'}| \geq 2 > |q_{kk'}|$.
    This satisfies the first condition for a fork, and we need only show that $F^+(r)$ and $F^-(r)$ are acyclic to prove that $F$ is a fork with point of return $r$.
    However, both of those quivers will be full subquivers of either $F \setminus \{k,r\}$ or $F \setminus \{k',r\}$ as $r \in Q_{k'}^k$.
    As such, they are both acyclic and $F$ is a fork with point of return $r.$

    If $r \notin Q_{k'}^k$, then $f_{kk'} = q_{kk'}$.
    Hence, we have that $|F_1| > |Q_1|$ as long as $r \neq k,k'$.
    If $k \to v \to k'$ for some vertex $v$ in $F$, then either exactly one of the full subquivers induced by $\{r,k,v\}$ or $\{r,k',v\}$ is a 3-cycle or both are acyclic.
    The former contradicts the first condition of a fork for either $F \setminus \{k\}$ or $F \setminus \{k'\}$.
    The latter only occurs when $r$ is a source or sink in the full subquiver induced by $\{k,k',r,v\}$.
    Hence, we have arrived at one of the two following subquivers in $Q$:
    \[
    \begin{tikzcd}
        & v \\
        k & & k' \\
        & r
        \arrow[from=1-2,to=2-3, "q_{vk'}"]
        \arrow[from=2-1,to=1-2, "q_{kv}"]
        \arrow[from=3-2,to=1-2, "q_{rv}" description]
        \arrow[from=3-2,to=2-3, "q_{rk'}"']
        \arrow[from=2-3,to=2-1, bend left=15]
        \arrow[from=3-2,to=2-1, "q_{rk}"]
    \end{tikzcd}
    \text{ or }
    \begin{tikzcd}
        & v \\
        k & & k' \\
        & r
        \arrow[from=1-2,to=2-3, "q_{vk'}"]
        \arrow[from=2-1,to=1-2, "q_{kv}"]
        \arrow[from=1-2,to=3-2, "q_{vr}" description]
        \arrow[from=2-3,to=3-2, "q_{k'r}"]
        \arrow[from=2-3,to=2-1, bend left=15]
        \arrow[from=2-1,to=3-2, "q_{kr}"']
    \end{tikzcd}.\]
    The second subquiver violates $Q \setminus \{k\}$ being a fork.
    The first subquiver violates $k$ being a source in $Q \setminus \{k'\}$.
    Note that the sign of $q_{kk'}$ is irrelevant here.
    Hence, there is no such $v$, and a similar argument shows that there is no $v$ such that $k' \to v \to k$.
    Thus $F_{k'}^k = \emptyset$, and $F$ is a pre-fork with vertices $\{r,k,k'\}$, proving the fifth statement.

    Finally, suppose that 
    \[\begin{tikzcd}
        & i \\
        k & & k'
        \arrow[from=2-1,to=1-2, "a"]
        \arrow[from=1-2,to=2-3, "b"]
        \arrow[from=2-3,to=2-1]
    \end{tikzcd}
    \text{ or }
    \begin{tikzcd}
        & i \\
        k & & k'
        \arrow[from=1-2,to=2-1, "a"']
        \arrow[from=2-3,to=1-2, "b"']
        \arrow[from=2-1,to=2-3]
    \end{tikzcd}\]
    is a full subquiver of $Q$ with $a, b > 0$.
    Since $Q_{k'}^k = W^+(k') \cap W^-(k)$ if $w_{kk'} = 1$ and $Q_{k'}^k = W^+(k) \cap W^-(k')$ if $w_{k'k} = 1$, we get that mutation at $k'$ must produce
    \[
    \begin{tikzcd}
        & i \\
        k & & k'
        \arrow[from=1-2,to=2-1, "b-a"']
        \arrow[from=2-3,to=1-2, "b"']
        \arrow[from=2-1,to=2-3]
    \end{tikzcd}
    \text{ or }
    \begin{tikzcd}
        & i \\
        k & & k'
        \arrow[from=2-1,to=1-2, "b-a"]
        \arrow[from=1-2,to=2-3, "b"]
        \arrow[from=2-3,to=2-1]
    \end{tikzcd}\]
    in $W$.
    Thus $b \geq a + 2$, proving the sixth and final statement.
\end{proof}

This leads us to our final type of quivers and two immediate corollaries about them, coming from Lemmata \ref{lem-wing-mutation-0} and \ref{lem-wing-mutation-1}.
Again, these quivers were first defined by Warkentin when dealing with the case of $q_{kk'} = 0$ \cite[Definition 6.3]{warkentin_exchange_2014}, but we provide a more general definition that will work in the case $|q_{kk'}| = 1$.

\begin{Def} \label{def-tips}
    Let $T$ be a quiver.
    We say that $T$ is a \textit{tip} with point of return $k'$ if it satisfies the following:
    \begin{itemize}
        \item $|t_{kk'}| < 2$ for some vertex $k$.
    
        \item $T \setminus \{k\}$ is a fork with point of return $k'$.

        \item $T_{k'}^k = \emptyset$ if $t_{k'k} = 0$, $T_{k'}^k = T^-(k')$ if $t_{k'k} = 1$, and $T_{k'}^k = T^+(k')$ if $t_{kk'} = 1$.

        \item $T \setminus \{k'\}$ is a fork with point of return $k$ if $t_{kk'} = 0$ and abundant acyclic otherwise.
        Additionally, if $t_{k'k} = 1$, then $k$ is a source in $T \setminus \{k'\}$, and if $t_{kk'} = 1$, then $k$ is a sink in $T\setminus \{k'\}$.

        \item If one of 
    \[\begin{tikzcd}
        & i \\
        k & & k'
        \arrow[from=2-1,to=1-2, "a"]
        \arrow[from=1-2,to=2-3, "b"]
        \arrow[from=2-3,to=2-1]
    \end{tikzcd}
    \text{ or }
    \begin{tikzcd}
        & i \\
        k & & k'
        \arrow[from=1-2,to=2-1, "a"']
        \arrow[from=2-3,to=1-2, "b"']
        \arrow[from=2-1,to=2-3]
    \end{tikzcd}\]
    is a full subquiver of $T$ with $a, b > 0$, then $b \geq a + 2$.
    \end{itemize}
\end{Def}

Note that our point of return is only unique if $t_{kk'} \neq 0$.

\begin{Cor} \label{cor-tip-wing}
    Let $W$ be a wing with point of return $k$.
    Then $T = \mu_{k'}(W)$ is a tip with point of return $k'$, and $|T_1| > |W_1|$.
\end{Cor}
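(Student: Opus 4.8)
The plan is to treat Corollary~\ref{cor-tip-wing} as a direct repackaging of the two wing–mutation lemmas. Since $W$ is a wing with point of return $k$, Definition~\ref{def-wing} forces $|w_{kk'}| < 2$, so exactly one of the cases $w_{kk'} = 0$ and $|w_{kk'}| = 1$ occurs. In both cases, mutation at $k'$ reverses the at most one arrow between $k$ and $k'$ and nothing else along that edge, so $T = \mu_{k'}(W)$ satisfies $t_{kk'} = -w_{kk'}$ and hence $|t_{kk'}| < 2$, which is the first bullet of Definition~\ref{def-tips}. The remaining bullets will be verified one at a time by quoting the appropriate lemma.

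For the case $w_{kk'} = 0$ I would apply Lemma~\ref{lem-wing-mutation-0} to $Q = \mu_{k'}(W)$: it yields that $Q \setminus \{k\}$ is a fork with point of return $k'$ (second bullet of Definition~\ref{def-tips}), that $Q_{k'}^k = \emptyset$ (the $t_{k'k} = 0$ branch of the third bullet), that $Q \setminus \{k'\}$ is a fork with point of return $k$ (the $t_{kk'} = 0$ branch of the fourth bullet), and that $|Q_1| > |W_1|$. The fifth bullet is vacuous here, since $t_{kk'} = 0$ means no triple $\{i,k,k'\}$ spans a $3$-cycle. For the case $|w_{kk'}| = 1$ I would instead apply Lemma~\ref{lem-wing-mutation-1} to $Q = \mu_{k'}(W)$; its five displayed conclusions match, in order, the second bullet, the two sign-dependent alternatives of the third bullet, the fourth bullet (abundant acyclic, with $k$ a source or sink according to the direction), the estimate $|Q_1| > |W_1|$, and the final $b \ge a+2$ condition on $3$-cycles through $k$ and $k'$. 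In either case $T = Q$ is a tip with point of return $k'$ and $|T_1| = |Q_1| > |W_1|$.

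The argument involves no genuine difficulty; the only step needing care is the bookkeeping of sign conventions. One must check that the arrow reversal induced by $\mu_{k'}$ sends $w_{kk'} = 1$ to $t_{kk'} = -1$ wait—$t_{kk'}$ negative means $k' \to k$, i.e.\ $t_{k'k} = 1$—so that the branch ``$t_{k'k} = 1$'' of Definition~\ref{def-tips} is fed by the branch ``$q_{k'k} = 1$'' of Lemma~\ref{lem-wing-mutation-1}, and symmetrically for the other sign; getting this pairing right is what makes the description of $T_{k'}^k$ and the location of the source/sink in $T \setminus \{k'\}$ come out correctly. I would also add the remark, matching the note after Definition~\ref{def-tips}, that the point of return $k'$ of $T$ is unique precisely when $t_{kk'} \neq 0$.
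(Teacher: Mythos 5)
Your proposal is correct and follows exactly the route the paper intends: the corollary is stated as an immediate consequence of Lemmata \ref{lem-wing-mutation-0} and \ref{lem-wing-mutation-1}, and you verify each bullet of Definition \ref{def-tips} against the corresponding conclusion of those lemmas, split according to $w_{kk'}=0$ or $|w_{kk'}|=1$. Your sign bookkeeping ($t_{kk'}=-w_{kk'}$, so the $q_{k'k}=1$ branch of Lemma \ref{lem-wing-mutation-1} feeds the $t_{k'k}=1$ branch of the tip definition) and the observation that the $3$-cycle condition is vacuous when $t_{kk'}=0$ are both accurate.
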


\begin{Cor} \label{cor-tip-mutation}
    Let $T$ be a tip with point of return $k'$ and $r \neq k,k' $ a vertex.
    Then $\mu_r(T)$ is a pre-fork with vertices $\{r,k,k'\}$ if $t_{kk'} = 0$ or $r \in T_{k'}^k$, and $\mu_r(T)$ is a fork with point of return $r$ if $t_{kk'} \neq 0$ and $r \notin T_{k'}^k$.
    In either case, the quiver $P = \mu_r(T)$ has more arrows than $T$, i.e., $|P_1| > |T_1|$.
\end{Cor}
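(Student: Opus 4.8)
The plan is to read Corollary~\ref{cor-tip-mutation} off of Lemmata~\ref{lem-wing-mutation-0} and~\ref{lem-wing-mutation-1}, in the same way that Corollary~\ref{cor-tip-wing} is read off of them. A tip $T$ with point of return $k'$ and distinguished vertex $k$ has $|t_{kk'}| < 2$, so I would split into the cases $t_{kk'} = 0$ and $|t_{kk'}| = 1$. The bridge between the two pictures is the observation that $T$ is, up to the involution $\mu_{k'}$, exactly the quiver called $Q$ in those two lemmata: I would show that $W := \mu_{k'}(T)$ is a wing with point of return $k$ — a wing with $w_{kk'} = 0$ when $t_{kk'} = 0$, and one with $|w_{kk'}| = 1$ when $|t_{kk'}| = 1$ — so that $T = \mu_{k'}(W)$. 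Granting this, the case $t_{kk'} = 0$ is handled by the last bullet of Lemma~\ref{lem-wing-mutation-0}, which says $\mu_r(T)$ is a pre-fork with vertices $\{r,k,k'\}$ and $|(\mu_r(T))_1| > |T_1|$ for every $r \neq k,k'$; and the case $|t_{kk'}| = 1$ is handled by the fifth bullet of Lemma~\ref{lem-wing-mutation-1}, which produces a fork with point of return $r$ or a pre-fork with vertices $\{r,k,k'\}$ according to whether $r$ lies in $T_{k'}^k$, again with $|(\mu_r(T))_1| > |T_1|$. Setting $P = \mu_r(T)$, this is exactly the dichotomy and the arrow-count inequality claimed.

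To carry out the reduction I would verify the axioms of Definition~\ref{def-wing} for $W = \mu_{k'}(T)$ one by one; once $W$ is known to be a wing, involutivity of mutation returns $T = \mu_{k'}(W)$ and the lemmata apply verbatim. The multiplicity bound $|w_{kk'}| = |t_{kk'}| < 2$ is immediate, since mutating at $k'$ only reverses the $k$--$k'$ arrow. That $W \setminus \{k\} = \mu_{k'}(T\setminus\{k\})$ is abundant acyclic, and that $W \setminus \{k'\}$ is a fork with point of return $k$, must be extracted from the second, third and fourth tip axioms — using that $T\setminus\{k\}$ is a fork with point of return $k'$, that $T\setminus\{k'\}$ is a fork (if $t_{kk'}=0$) or abundant acyclic with $k$ a source or sink (if $|t_{kk'}|=1$), and the prescribed value of $T_{k'}^k$. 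The equality $W_{k'}^k = W_0 \setminus \{k,k'\}$ is the reverse of the third tip axiom: mutation at $k'$ places each vertex $i \neq k,k'$ on a path $k \to i \to k'$ or $k' \to i \to k$. Finally, the last wing axiom (the $3$-cycle inequality on $\{i,k,k'\}$) is the reverse of the last tip axiom. Throughout I would lean on Lemmata~\ref{lem-fork-mutation}--\ref{lem-fork-tree} and Corollary~\ref{cor-acyclic-ordering} for the behavior of forks and abundant acyclic quivers under mutation.

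The main obstacle is this axiom-check for $W \setminus \{k'\}$ when $|t_{kk'}| = 1$: mutation at $k'$ simultaneously flips the $k$--$k'$ edge and short-circuits every path $i \to k' \to j$, so one must track the arrows created among $W_0 \setminus \{k'\}$ and confirm they assemble into a fork with point of return $k$ — equivalently, recover the abundant-acyclic-with-$k$-a-source/sink structure recorded in the tip axioms. This is exactly the $3$-cycle bookkeeping in the proof of Lemma~\ref{lem-wing-mutation-1} run backwards, and it is the only step that is not a direct citation. One could instead mutate $T$ at $r$ directly — applying Lemma~\ref{lem-fork-mutation} to $T\setminus\{k\}$ and $T\setminus\{k'\}$, then analyzing the $3$-cycle on $\{r,k,k'\}$ and whether $(\mu_r(T))_{k'}^k$ is empty — but this runs into the same bookkeeping and, in addition, the edge case in which $r$ is the extremal vertex of the abundant acyclic quiver $T\setminus\{k'\}$, so that $\mu_r(T\setminus\{k'\})$ is abundant acyclic rather than a fork and the fork structure of $\mu_r(T)$ must be argued by hand from the multiplicities along $k' \to r \to k$; for that reason I would prefer the wing reduction.
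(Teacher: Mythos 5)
Your top-level strategy---reading the corollary off the final bullets of Lemmata \ref{lem-wing-mutation-0} and \ref{lem-wing-mutation-1}---is the paper's own (it gives no separate proof and presents the corollary as immediate from those lemmata), but the bridge you build has a genuine gap. You need every tip $T$ with point of return $k'$ to satisfy that $W := \mu_{k'}(T)$ is a wing with point of return $k$, and the tip axioms do not imply this; the problematic item is exactly the one you propose to ``extract,'' namely that $W \setminus \{k\} = \mu_{k'}(T \setminus \{k\})$ is abundant acyclic. That quiver is a fork mutated at its point of return, which need not be abundant acyclic. Concretely, take $T$ on $\{k,k',i,j\}$ with arrows $i \xrightarrow{3} k'$, $k' \xrightarrow{4} j$, $j \xrightarrow{5} i$, $i \xrightarrow{2} k$, $k \xrightarrow{2} j$ and $t_{kk'}=0$. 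Then $T$ satisfies Definition \ref{def-tips}: $T\setminus\{k\}$ is the $(3,4,5)$ fork with point of return $k'$, $T\setminus\{k'\}$ is the $(2,2,5)$ fork with point of return $k$, $T_{k'}^k=\emptyset$, and the last axiom is vacuous. Yet $\mu_{k'}(T)\setminus\{k\}$ is the cycle $i \xrightarrow{7} j \xrightarrow{4} k' \xrightarrow{3} i$ (again a fork, not abundant acyclic) and $\mu_{k'}(T)\setminus\{k'\}$ is acyclic rather than a fork with point of return $k$, so $\mu_{k'}(T)$ is not a wing with point of return $k$. Hence your reduction proves the corollary only for tips that actually arise as $\mu_{k'}$ of a wing, not for arbitrary tips as stated (the corollary does hold for this $T$; your proof just does not reach it).

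The repair is the route you list as an ``alternative'' and set aside: mutate $T$ at $r$ directly, applying Lemma \ref{lem-fork-mutation} to $T\setminus\{k\}$ (and, when $t_{kk'}=0$, to $T\setminus\{k'\}$), and redo the $T_{k'}^k$ and $3$-cycle bookkeeping---this is precisely the argument for the fifth bullets of Lemmata \ref{lem-wing-mutation-0} and \ref{lem-wing-mutation-1}, and it uses only the properties recorded in Definition \ref{def-tips}, which is why the paper can state the corollary without further proof. One more caution: the dichotomy you claim matches ``exactly'' does not. Lemma \ref{lem-wing-mutation-1} yields a fork when $r \in T_{k'}^k$ and a pre-fork when $r \notin T_{k'}^k$ (if $r \notin T_{k'}^k$ the single arrow between $k$ and $k'$ survives, so $\mu_r(T)$ is not abundant and cannot be a fork), which is the reverse of the condition as printed in the corollary; a careful write-up along your lines has to reproduce the lemma's version rather than assert an exact match.
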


Finally, we may describe all the quivers mutation-equivalent to a key that are not arrived at by passing through a pre-fork.
Lemma \ref{lem-key-subgraph} is the culmination of our previous definitions, corollaries, and lemmata, as we show that a key's labelled mutation class contains a connected subgraph devoid of pre-forks or forks.
Furthermore, this subgraph contains a finite number of quivers.
Drawing on our experience with abundant acyclic quivers, we can reasonably suspect that there will be a portion of any quiver's labelled mutation class that is analogous to the forkless part for pre-forks instead.
Discussion of the pre-forkless part is reserved for Section \ref{sec-fpfp}.

\begin{Lem} \label{lem-key-subgraph}
    Let $Q$ be a key with vertices $\{k,k'\}$ on $n$ vertices.
    Then we see the following for the different types of keys:
    
    If $q_{kk'} = 0$, then we have the following subgraph of the labelled mutation graph of $Q$:
    \[
        \adjustbox{scale=0.9, center}{%
        \begin{tikzcd}[cramped, ampersand replacement=\&]
            \& \& \& \& R \\
            \& W^{(1)} \& \& K^{(0)} \& \& K^{(n-2)} \& \& \Tilde{W}^{(n-3)} \\
            T^{(1)} \& \& K^{(1)} \& \& R' \& \& K^{(n-3)} \& \& T^{(n-3)}\\
            \& \Tilde{W}^{(1)} \& \& \& \& \& \& W^{(n-3)}\\
            \& \& \& \& K^{(i)}\\
            \& \& \& W^{(i)} \& \& \Tilde{W}^{(i)}\\
            \& \& \& \& T^{(i)}
            \arrow[from=1-5, to=2-4, no head, "\mu_{k}"']
            \arrow[from=1-5, to=2-6, no head, "\mu_{k'}"]
            \arrow[from=3-5, to=2-4, no head, "\mu_{k'}"]
            \arrow[from=3-5, to=2-6, no head, "\mu_{k}"']
            \arrow[from=3-3, to=2-4, no head]
            \arrow[from=3-7, to=2-6, no head]
            \arrow[from=2-2, to=3-1, no head, "\mu_{k'}"']
            \arrow[from=2-2, to=3-3, no head, "\mu_{k}"]
            \arrow[from=2-8, to=3-7, no head, "\mu_{k'}"']
            \arrow[from=2-8, to=3-9, no head, "\mu_{k}"]
            \arrow[from=4-2, to=3-1, no head, "\mu_{k}"]
            \arrow[from=4-2, to=3-3, no head, "\mu_{k'}"']
            \arrow[from=4-8, to=3-7, no head, "\mu_{k}"]
            \arrow[from=4-8, to=3-9, no head, "\mu_{k'}"']
            \arrow[from=3-3, to=5-5, no head, dashed, bend right]
            \arrow[from=5-5, to=3-7, no head, dashed, bend right]
            \arrow[from=5-5, to=6-4, no head, "\mu_{k}"']
            \arrow[from=5-5, to=6-6, no head, "\mu_{k'}"]
            \arrow[from=7-5, to=6-4, no head, "\mu_{k'}"]
            \arrow[from=7-5, to=6-6, no head, "\mu_{k}"']
        \end{tikzcd}
        }\]
        where 
        \begin{itemize}
            \item $K^{(0)}$ is a key with $k$ a source;
            
            \item $K^{(n-2)}$ is a key with $k$ a sink;
            
            \item $K^{(i)}$ is a key with $k$ not a source obtained by mutating repeatedly $K^{(0)}$ at $i$ sinks for $i > 0$;

            \item $Q$ is one of the $K^{(i)}$;

            \item $W^{(i)}$ and $\Tilde{W}^{(i)}$ are both wings for $i > 0$;

            \item $T^{(i)}$ is a tip for $i > 0$;
            
            \item $R$ and $R'$ are both acyclic quivers;

            \item mutation at any vertex $r$ that is not labelled and not a sink or source produces a pre-fork with vertices $\{r,k,k'\}$ or a fork with point of return $r$.
        \end{itemize}

    If $|q_{kk'}| = 1$ then we have the following subgraph of the labelled mutation graph of $Q$:
    \[
    \begin{tikzcd}[cramped]
            &  R & K^{(n-2)} & \Tilde{P} & \Tilde{R} &  \\
            K^{(0)} & & & & &   \Tilde{K}^{(0)}  \\
            &  R' & P' & \Tilde{K}^{(n-2)} & \Tilde{R}' &   \\
            &  W^{(1)} & T^{(1)} & \Tilde{T}^{(1)} & \Tilde{W}^{(1)} &  \\
            K^{(1)} & & & & &   \Tilde{K}^{(1)} \\
            &  (W')^{(1)} & (T')^{(1)} & (\Tilde{T}')^{(1)} & (\Tilde{W}')^{(1)} & \\
            &  W^{(i)} & T^{(i)} & \Tilde{T}^{(i)} & \Tilde{W}^{(i)} &  \\
            K^{(i)} & & & & &   \Tilde{K}^{i)} \\
            &  (W')^{(i)} & (T')^{(i)} & (\Tilde{T}')^{(i)} & (\Tilde{W}')^{(i)} & \\
            &  W^{(n-3)} & T^{(n-3)} & \Tilde{T}^{(n-3)} & \Tilde{W}^{(n-3)} &  \\
            K^{(n-3)} & & & & &   \Tilde{K}^{(n-3)} \\
            &  (W')^{(n-3)} & (T')^{(n-3)} & (\Tilde{T}')^{(n-3)} & (\Tilde{W}')^{(n-3)}
            \arrow[from=2-1,to=1-2, no head, "\mu_{k}"]
            \arrow[from=2-1,to=3-2, no head, "\mu_{k'}"']
            \arrow[from=1-2,to=1-3, no head, "\mu_{k'}"]
            \arrow[from=3-2,to=3-3, no head, "\mu_{k}"']
            \arrow[from=1-3,to=1-4, no head, "\mu_{k}"]
            \arrow[from=3-3,to=3-4, no head, "\mu_{k'}"']
            \arrow[from=1-4,to=1-5, no head, "\mu_{k'}"]
            \arrow[from=3-4,to=3-5, no head, "\mu_{k}"']
            \arrow[from=1-5,to=2-6, no head, "\mu_{k}"]
            \arrow[from=3-5,to=2-6, no head, "\mu_{k'}"']
            \arrow[from=2-1,to=5-1, no head]
            \arrow[from=2-6,to=5-6, no head]
            \arrow[from=5-1,to=8-1, no head, dashed]
            \arrow[from=5-6,to=8-6, no head, dashed]
            \arrow[from=5-1,to=4-2, no head, "\mu_{k}"]
            \arrow[from=5-1,to=6-2, no head, "\mu_{k'}"']
            \arrow[from=4-2,to=4-3, no head, "\mu_{k'}"]
            \arrow[from=6-2,to=6-3, no head, "\mu_{k}"']
            \arrow[from=4-3,to=4-4, no head, "\mu_{k}"]
            \arrow[from=6-3,to=6-4, no head, "\mu_{k'}"']
            \arrow[from=4-4,to=4-5, no head, "\mu_{k'}"]
            \arrow[from=6-4,to=6-5, no head, "\mu_{k}"']
            \arrow[from=4-5,to=5-6, no head, "\mu_{k}"]
            \arrow[from=6-5,to=5-6, no head, "\mu_{k'}"']
            \arrow[from=8-1,to=7-2, no head, "\mu_{k}"]
            \arrow[from=8-1,to=9-2, no head, "\mu_{k'}"']
            \arrow[from=7-2,to=7-3, no head, "\mu_{k'}"]
            \arrow[from=9-2,to=9-3, no head, "\mu_{k}"']
            \arrow[from=7-3,to=7-4, no head, "\mu_{k}"]
            \arrow[from=9-3,to=9-4, no head, "\mu_{k'}"']
            \arrow[from=7-4,to=7-5, no head, "\mu_{k'}"]
            \arrow[from=9-4,to=9-5, no head, "\mu_{k}"']
            \arrow[from=7-5,to=8-6, no head, "\mu_{k}"]
            \arrow[from=9-5,to=8-6, no head, "\mu_{k'}"']
            \arrow[from=8-1,to=11-1, no head, dashed]
            \arrow[from=8-6,to=11-6, no head, dashed]
            \arrow[from=11-1,to=10-2, no head, "\mu_{k}"]
            \arrow[from=11-1,to=12-2, no head, "\mu_{k'}"']
            \arrow[from=10-2,to=10-3, no head, "\mu_{k'}"]
            \arrow[from=12-2,to=12-3, no head, "\mu_{k}"']
            \arrow[from=10-3,to=10-4, no head, "\mu_{k}"]
            \arrow[from=12-3,to=12-4, no head, "\mu_{k'}"']
            \arrow[from=10-4,to=10-5, no head, "\mu_{k'}"]
            \arrow[from=12-4,to=12-5, no head, "\mu_{k}"']
            \arrow[from=10-5,to=11-6, no head, "\mu_{k}"]
            \arrow[from=12-5,to=11-6, no head, "\mu_{k'}"']
            \arrow[from=11-1,to=1-3, overlay, no head, bend left=50]
            \arrow[from=11-6,to=3-4, overlay, no head, bend right=90]
    \end{tikzcd}
    \]
        where 
        \begin{itemize}
            \item $K^{(0)}$ is a key with $k$ a source;

            \item $\Tilde{K}^{(0)}$ is a key with $k'$ as a source;

            \item $K^{(n-2)}$ is a key with $k'$ a sink;

            \item $\Tilde{K}^{(n-2)}$ is a key with $k$ as a sink;

            \item $K^{(i)}$ is a key with $k$ not a source obtained by mutating repeatedly $K^{(0)}$ at $i$ sinks for $i > 0$;
            
            \item $Q$ is one of the $K^{(i)}$ or $\Tilde{K}^{(i)}$

            \item $\Tilde{K}^{(i)}$ is a key with $k'$ not a source obtained by mutating repeatedly $\Tilde{K}^{(0)}$ at $i$ sinks for $i > 0$;

            \item $W^{(i)}$, $\Tilde{W}^{(i)}$, $(W')^{(i)}$, and $(\Tilde{W}')^{(i)}$ are all wings for $i > 0$;

            \item $T^{(i)}$, $\Tilde{T}^{(i)}$, $(T')^{(i)}$, and $(\Tilde{T}')^{(i)}$ are all tips for $i > 0$;

            \item mutation at any vertex $r$ that is not labelled and not a sink or source produces a pre-fork with vertices $\{r,k,k'\}$ or a fork with point of return $r$.
        \end{itemize}
\end{Lem}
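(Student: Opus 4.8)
The plan is to prove this by a systematic bookkeeping: starting from $Q$ and repeatedly applying the mutation results of Section~\ref{sec-keys-pre-forks} and of the present section, we trace out exactly the quivers reachable from $Q$ without first passing through a pre-fork or a fork, and check that they fit together into the displayed finite subgraph. The case $|q_{kk'}|\ge 2$ is excluded because such a $Q$ is abundant acyclic, already handled by Lemma~\ref{lem-abundant-acyclic-count}, so only $q_{kk'}=0$ and $|q_{kk'}|=1$ remain, and they are treated in parallel. The overall shape mimics the proof of Lemma~\ref{lem-abundant-acyclic-count}: there is a backbone of keys obtained by mutating at sinks and sources, and everything else dangles off it in a controlled, finite way.

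First I would construct the backbone. By Corollary~\ref{cor-key-source-sink} and Lemma~\ref{lem-key-ordering} the acyclic ordering of a key is essentially unique, with $k$ and $k'$ forced into consecutive positions; and mutating a key at a sink or source other than $k$ or $k'$ again produces a key, by Lemma~\ref{lem-key-mutation}, without changing any arrow count between vertices $\ne k,k'$. So, exactly as in the proof of Lemma~\ref{lem-abundant-acyclic-count}, repeatedly mutating at the (unique) sink walks through a finite list of pairwise-distinct keys $K^{(0)},\dots,K^{(n-2)}$ — distinctness coming from the uniqueness of the acyclic ordering — starting from the configuration $K^{(0)}$ in which $k$ (and, when $q_{kk'}=0$, also $k'$) is a source and ending at $K^{(n-2)}$ in which $k'$ (and, when $q_{kk'}=0$, also $k$) is a sink; the walk stops there because the only remaining sink/source mutations at the two extremes are at $k$ or $k'$ themselves. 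In the case $|q_{kk'}|=1$, say $k\to k'$, there is in addition a mirror backbone $\tilde{K}^{(0)},\dots,\tilde{K}^{(n-2)}$ with $k'\to k$: since $k\to k'$ is a single arrow, Remark~\ref{rmk-special-cycles} says $\mu_{[k,k',k,k',k]}$ is the copy of the ambient quiver with $k$ and $k'$ interchanged, and following these five mutations from $K^{(0)}$ (respectively from each interior $K^{(i)}$) produces $\tilde{K}^{(0)}$ (respectively $\tilde{K}^{(i)}$) together with the bridge quivers between them.

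Next I would resolve the ends and the dangling pieces. At $K^{(0)}$ the vertex $k$ is a source and at $K^{(n-2)}$ the vertex $k'$ is a sink, so Lemma~\ref{lem-key-k-sink-source-mutation} applies: $\mu_k$ and $\mu_{k'}$ of these endpoint keys yield the transitional quivers $R,R'$ (and, for $|q_{kk'}|=1$, their continuations $P',\tilde{P},\tilde{R},\tilde{R}'$ along the swap bridge), which are not keys because $R_{k'}^k=R_0\setminus\{k,k'\}$; the same lemma tells us that $\mu_{[k,k']}$ of an endpoint key is again a key (realizing the edges $K^{(0)}-R-K^{(n-2)}$ and $K^{(0)}-R'-K^{(n-2)}$ in the first diagram and the longer swap bridges in the second), and that mutating any of $R,R',P',\tilde{P}$ at a vertex $\ne k,k'$ produces a fork. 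For an interior key $K^{(i)}$ with $0<i<n-2$, neither $k$ nor $k'$ is a sink or source, so Corollary~\ref{cor-wing-pre-fork-key} gives wings $W^{(i)}=\mu_k(K^{(i)})$ and its mirror $\mu_{k'}(K^{(i)})$; Corollary~\ref{cor-tip-wing} turns a wing into a tip via the remaining point-of-return-avoiding mutation; and Remark~\ref{rmk-special-cycles}, applied to the single arrow between $k$ and $k'$ in $K^{(i)}$, closes the resulting short chain of wings and tips into the little cycle around $K^{(i)}$ shown in the diagram. Finally, Lemmata~\ref{lem-wing-mutation-0} and~\ref{lem-wing-mutation-1} together with Corollary~\ref{cor-tip-mutation} show that mutating any of these wings or tips at a vertex other than its point of return, and not a sink or source, produces a pre-fork or a fork; combined with Lemma~\ref{lem-key-mutation} for the keys and Lemma~\ref{lem-key-k-sink-source-mutation} for the transitional quivers, this establishes the last bulleted claim and shows that every edge leaving the displayed subgraph lands on a pre-fork or a fork, so the subgraph is exactly as pictured and is finite.

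The main obstacle is the volume of case analysis rather than any single hard step. One must keep straight, at each end of each backbone, which of $\mu_k$ and $\mu_{k'}$ produces which neighbour and with which direction of the arrow between $k$ and $k'$; in the $|q_{kk'}|=1$ case one must check that the two backbones $K^{(i)}$ and $\tilde{K}^{(i)}$ are genuinely distinct and are joined only along the five-step swap bridges and nowhere else; and one must verify the ``unlabelled vertex'' escape claim uniformly, namely that from a key, wing, tip, or transitional quiver no exceptional mutation (at a non-sink/source vertex, and not the point of return for wings and tips) can slip back onto another key, wing, or tip rather than onto a pre-fork or fork. All the ingredients needed for this are already available from Lemmata~\ref{lem-key-mutation}, \ref{lem-key-mutation-k}, \ref{lem-key-k-sink-source-mutation}, \ref{lem-wing-mutation-0}, \ref{lem-wing-mutation-1} and Corollaries~\ref{cor-wing-pre-fork-key}, \ref{cor-tip-wing}, \ref{cor-tip-mutation}, so the proof is a long but essentially mechanical assembly once the two diagrams are read off correctly.
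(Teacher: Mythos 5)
Your proposal follows essentially the same route as the paper: build the backbone of keys $K^{(0)},\dots,K^{(n-2)}$ by sink/source mutations (Corollary~\ref{cor-key-source-sink}, Lemma~\ref{lem-key-mutation}), handle the endpoints and transitional quivers $R,R',P',\Tilde{P}$ with Lemma~\ref{lem-key-k-sink-source-mutation}, attach wings and tips to interior keys via Corollaries~\ref{cor-wing-pre-fork-key} and~\ref{cor-tip-wing}, obtain the mirror backbone $\Tilde{K}^{(i)}$ from the five-step swap $[k,k',k,k',k]$, and use Lemmata~\ref{lem-wing-mutation-0}, \ref{lem-wing-mutation-1} and Corollary~\ref{cor-tip-mutation} for the ``all other mutations give a pre-fork or fork'' claim. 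The only step you leave implicit that the paper makes explicit is the identification closing the cycle, namely that $\mu_{[k,k']}(K^{(0)})$ is not merely some key but equals $K^{(n-2)}$, which the paper verifies by comparing arrow data ($y_{ij}=x_{ij}$, $y_{ik}=-x_{ik}$, $y_{ik'}=-x_{ik'}$, $y_{kk'}=x_{kk'}$) after mutating once at every vertex other than $k,k'$; this fits within your ``mechanical assembly'' caveat, so the proposal is sound.
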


\begin{proof}
    We suppose first that $q_{kk'} = 0$.
    From Corollary \ref{cor-key-source-sink} and Lemma \ref{lem-key-mutation}, we may mutate $Q$ along sources to a quiver $K^{(0)}$ with $k$ a source and along sinks to $K^{(n-2)}$ with $k$ a sink.
    Let $X = K^{(0)}$ and $Y = K^{(n-2)}$.
    Then $y_{ij}$ must equal $x_{ij}$ for all $i,j \neq k,k'$, as we mutate at every vertex other than $k$ or $k'$ exactly once to reach $K^{(n-2)}$.
    This also tells us that $y_{ik} = -x_{ik}$, $y_{ik'} = -x_{ik'}$, and $y_{kk'} = x_{kk'}$ for all $i \neq k,k'$.
    Thus, Lemma \ref{lem-key-k-sink-source-mutation} tells us that $K^{(n-2)} = \mu_{[k,k']}(K^{(0)}) = \mu_{[k',k]}(K^{(0)})$, completing the cycle.
    
    Additionally, every quiver $K^{(i)}$ along the way must be a key with vertices $\{k,k'\}$ for $0 < i < n-2$.
    Thus mutating at $k$ or $k'$ produces a wing $W^{(i)}$ and along $[k,k']$ or $[k',k]$ produces a tip $T^{(i)}$ by Corollaries \ref{cor-wing-pre-fork-key} and \ref{cor-tip-wing}, meaning mutation at $r \neq k,k'$ will produce either a pre-fork with vertices $\{r,k,k'\}$ or a fork with point of return $r$.
    Lemma \ref{lem-key-k-sink-source-mutation} tells us that $R$ and $R'$ are acyclic quivers such that $\mu_r(R)$ and $\mu_r(R')$ are forks with point of return $r$.
    Thus the above connected subgraph appears in the labelled mutation graph of $Q$, and mutating at a vertex $r$ that is not on a labelled  edge and not a sink or source will produce a pre-fork with vertices $\{r,k,k'\}$ or a fork with point of return $r$.

    If $|q_{kk'}| = 1$, we may mutate $Q$ in the same manner that we did before along sources to a quiver $K^{(0)}$ (or an isomorphic copy with $k$ and $k'$ swapped) with $k$ a source and along sinks to $K^{(n-2)}$ with $k'$ a sink.
    Every quiver $K^{(i)}$ along the way with $0 < i < n-2$ must be a key with vertices $\{k,k'\}$, where neither $k$ or $k'$ is a sink or source.
    Thus mutating $K^{(i)}$ at $k$ or $k'$ produces a wing $W^{(i)}$ or $(W')^{(i)}$ and along $[k,k']$ or $[k',k]$ produces a tip $T^{(i)}$ or $(T')^{(i)}$ by Corollaries \ref{cor-wing-pre-fork-key} and \ref{cor-tip-wing}, meaning mutation at $r \neq k,k'$ will produce either a pre-fork with vertices $\{r,k,k'\}$ or a fork with point of return $r$.
    Furthermore, mutating each $K^{(i)}$ along $[k,k',k,k',k]$ or $[k',k,k',k,k']$ produces an isomorphic copy $\Tilde{K}^{(i)}$, making them keys with vertices $\{k,k'\}$.
    This then shows that $\Tilde{K}^{(0)}$ is a key with source $k'$ and $\Tilde{K}^{(n-2)}$ is a key with sink $k$.
    The $R$, $R'$, $P$, and $P'$ quivers are quivers such that mutation at $r \neq k,k'$ produces a fork with point of return $r$ by Lemma \ref{lem-key-k-sink-source-mutation}.
    We need only show that the cycle involving the keys $K^{(0)}$ through $K^{(n-2)}$ exists.

    To do so, let $X = K^{(0)}$ and $Y = K^{(n-2)}$.
    Then $y_{ij}$ must equal $x_{ij}$ for all $i,j \neq k,k'$, as we mutate at every vertex other than $k$ or $k'$ exactly once to reach $K^{(n-2)}$.
    This also tells us that $y_{ik} = -x_{ik}$, $y_{ik'} = -x_{ik'}$, and $y_{kk'} = x_{kk'}$ for all $i \neq k,k'$.
    Thus, Lemma \ref{lem-key-k-sink-source-mutation} tells us that $K^{(n-2)} = \mu_{[k,k']}(K^{(0)})$, completing the cycle.
    A similar argument demonstrates the cycle formed from the $\Tilde{K}^{(i)}$.
\end{proof}

\begin{Rmk} \label{rmk-keys}
    The shape produced in the labelled mutation graph when $q_{kk'} = 0$ inspired the name keys, as it is reminiscent of keys on a keyring.
\end{Rmk}

We now need to show a version of Warkentin's Tree Lemma, allowing us to ignore the quivers that are obtained after mutating through a pre-fork.
He proved our Lemma \ref{lem-pre-fork-tree} for the exchange graph instead of the labelled mutation graph \cite[Lemma 6.7 \& Lemma 13.20]{warkentin_exchange_2014}.
We prove it for the labelled mutation graph in order to better control the mutation sequences between quivers in the labelled mutation class.

\begin{Lem} \label{lem-pre-fork-tree}
    Let $Q$ be a pre-fork with vertices $\{r,k,k'\}$.
    \begin{enumerate}[label=\alph*)]
        \item \textup{\cite[Lemma 6.7]{warkentin_exchange_2014}} If $q_{kk'} = 0$, then we get the following subgraph in the labelled mutation graph:
    \[\begin{tikzcd}
    & & & & W & & & \\
    & Q & & & & & & T \\
    & & & & W' & & &   \\
    \mu_r(Q) & &
    \arrow[from=2-2, to=1-5, no head, "\mu_{k}"]
    \arrow[from=2-2, to=3-5, no head, "\mu_{k'}"']
    \arrow[from=2-2, to=4-1, no head, "\mu_r"]
    \arrow[from=1-5, to=2-8, no head, "\mu_{k'}"]
    \arrow[from=3-5, to=2-8, no head, "\mu_{k}"']
    \end{tikzcd}\]
    where
    \begin{itemize}
        \item $W$ is a wing with point of return $k$;

        \item $W'$ a wing with point of return $k'$;

        \item $T$ a tip with point of return $k$;

        \item mutation at a non-labelled vertex $m$ in a quiver other than $\mu_r(Q)$ produces either a fork with point of return $m$ or a pre-fork with vertices $\{m,k,k'\}$.
    \end{itemize}

        \item If $|q_{kk'}| = 1$, then we get the following subgraph in the labelled mutation graph:
    \[\begin{tikzcd}
    & & & W & T & \Tilde{T} & \Tilde{W} & & \\
    & Q & & & & & & & \Tilde{Q} & \\
    & & & W' & T' & \Tilde{T}' & \Tilde{W}' & &  \\
    \mu_r(Q) & & & & & & & & & \mu_r(\Tilde{Q})
    \arrow[from=2-2, to=1-4, no head, "\mu_{k}"]
    \arrow[from=2-2, to=3-4, no head, "\mu_{k'}"']
    \arrow[from=2-2, to=4-1, no head, "\mu_r"]
    \arrow[from=2-9, to=4-10, no head, "\mu_r"']
    \arrow[from=1-4, to=1-5, no head, "\mu_{k'}"]
    \arrow[from=1-5, to=1-6, no head, "\mu_{k}"]
    \arrow[from=1-6, to=1-7, no head, "\mu_{k'}"]
    \arrow[from=1-7, to=2-9, no head, "\mu_{k}"]
    \arrow[from=3-4, to=3-5, no head, "\mu_{k}"']
    \arrow[from=3-5, to=3-6, no head, "\mu_{k'}"']
    \arrow[from=3-6, to=3-7, no head, "\mu_{k}"']
    \arrow[from=3-7, to=2-9, no head, "\mu_{k'}"']
    \end{tikzcd}\]
    where
    \begin{itemize}
        \item $W, W', \Tilde{W}, $ and $\Tilde{W}'$ are wings;

        \item $T, T', \Tilde{T}, $ and $\Tilde{T}'$ are tips;

        \item and mutation at a non-labelled vertex $m$ in a quiver other than $\mu_r(Q)$ or $\mu_r(\Tilde{Q})$ produces either a fork with point of return $m$ or a pre-fork with vertices $\{m,k,k'\}$.
    \end{itemize}
    Additionally, we have that $W \cong \Tilde{W}'$, $T \cong \Tilde{T}'$, $\Tilde{T} \cong T'$, $\Tilde{W} \cong W'$, $Q \cong \Tilde{Q}$, and $\mu_r(Q) \cong \mu_r(\Tilde{Q})$ all under the isomorphism that fixes every vertex but $k$ and $k'$.
    Furthermore, performing the mutation $[k,k',k,k',k]$ or $[k',k,k',k,k']$ on $\mu_r(Q)$ produces $\mu_r(\Tilde{Q})$ and vice versa.
    \end{enumerate}
\end{Lem}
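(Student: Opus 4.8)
The plan is as follows. Part a) is Warkentin's \cite[Lemma 6.7]{warkentin_exchange_2014}, so all the work is in part b), where $|q_{kk'}| = 1$ throughout. I would open by recording two facts that drive everything. First, mutating at $k$ or at $k'$ keeps a single arrow between $k$ and $k'$: since $Q$ is a pre-fork, $Q_{k'}^k = \emptyset$, so there is no vertex $v$ with $k \to v \to k'$ or $k' \to v \to k$, whence $\mu_k$ and $\mu_{k'}$ only reverse the $k$--$k'$ arrow without changing its multiplicity; along the whole chain the wings and tips produced keep this property because their sets $W_{k'}^k$, $T_{k'}^k$ are the controlled ones appearing in Lemma \ref{lem-wing-mutation-1} and Definitions \ref{def-wing} and \ref{def-tips}. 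Second, neither $k$ nor $k'$ is a source or a sink in $Q$: a fork has no source or sink (if $F^+(r) = \emptyset$ or $F^-(r) = \emptyset$ the fork would be acyclic, and otherwise for $i \in F^-(r)$, $j \in F^+(r)$ the fork inequality $f_{ji} > f_{ir} > 0$ forces $j \to i$, so $i$ is not a source, and dually), and since $Q \setminus \{k'\}$ and $Q \setminus \{k\}$ are forks, each of $k, k'$ already has an incoming and an outgoing arrow there.

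With this in hand, set $W := \mu_k(Q)$, $W' := \mu_{k'}(Q)$, $T := \mu_{k'}(W) = \mu_{[k,k']}(Q)$, and $T' := \mu_k(W') = \mu_{[k',k]}(Q)$. Corollary \ref{cor-wing-pre-fork-key} makes $W$ a wing with point of return $k$ and $W'$ a wing with point of return $k'$, and by the first fact $|w_{kk'}| = |w'_{kk'}| = 1$; applying Corollary \ref{cor-tip-wing} to the wing $W$ mutated at its non-return vertex $k'$ (and symmetrically to $W'$) makes $T$ a tip with point of return $k'$ and $T'$ a tip with point of return $k$, with $|t_{kk'}| = |t'_{kk'}| = 1$, $|T_1| > |W_1|$, and $|T'_1| > |W'_1|$. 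This supplies the two ``near'' triples of the diagram.

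The rest I would obtain by symmetry rather than by fresh mutation computations. Since $Q$ has exactly one arrow between $k$ and $k'$, Remark \ref{rmk-special-cycles} gives $\tilde Q := \mu_{[k,k',k,k',k]}(Q) = \mu_{[k',k,k',k,k']}(Q)$, equal to $Q$ with $k$ and $k'$ swapped; write $\sigma$ for that vertex-transposition isomorphism, which fixes every vertex but $k, k'$, satisfies $\sigma \circ \sigma = $ the identity and $\sigma(Q) = \tilde Q$, and for which $\sigma(\mu_v(X)) = \mu_{\sigma(v)}(\sigma(X))$ for every quiver $X$ and vertex $v$. Peeling the five-step identity one mutation at a time and using that mutation is an involution yields $\mu_{[k,k',k,k']}(Q) = \mu_k(\tilde Q) = \sigma(W')$, then $\mu_{[k,k',k]}(Q) = \mu_{k'}(\sigma(W')) = \sigma(T')$, and symmetrically $\mu_{[k',k,k',k]}(Q) = \sigma(W)$ and $\mu_{[k',k,k']}(Q) = \sigma(T)$. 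These four quivers are exactly $\tilde W, \tilde T, \tilde W', \tilde T'$ of the diagram, so they are wings and tips with the stated points of return, and the isomorphisms $W \cong \tilde W'$, $T \cong \tilde T'$, $\tilde T \cong T'$, $\tilde W \cong W'$, $Q \cong \tilde Q$, and $\mu_r(Q) \cong \mu_r(\tilde Q)$ (the last because $\sigma$ fixes $r$, so $\mu_r(\tilde Q) = \mu_r(\sigma(Q)) = \sigma(\mu_r(Q))$) all fall out, realized by the single isomorphism $\sigma$, which fixes every vertex but $k, k'$ as required. For the closing clause: $Q_{k'}^k = \emptyset$ forces $\mu_r(Q)$ to still carry one arrow between $k$ and $k'$, so Remark \ref{rmk-special-cycles} applies verbatim to $\mu_r(Q)$ and gives $\mu_{[k,k',k,k',k]}(\mu_r(Q)) = \mu_{[k',k,k',k,k']}(\mu_r(Q)) = \sigma(\mu_r(Q)) = \mu_r(\tilde Q)$, with ``vice versa'' because $\sigma$ is an involution.

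It remains to check the last bullet, that mutating a diagram quiver (other than $\mu_r(Q)$, $\mu_r(\tilde Q)$) at a non-labelled vertex $m$ produces a fork with point of return $m$ or a pre-fork with vertices $\{m,k,k'\}$. For $Q$ the non-labelled vertices are $m \neq r, k, k'$ and $\mu_m(Q)$ is a pre-fork with vertices $\{m,k,k'\}$ by Lemma \ref{lem-pre-fork-mutation}; for the wings $W, W'$ (each with $|w_{kk'}| = 1$) and any $m \neq k, k'$, $\mu_m$ gives a fork with point of return $m$ by Lemma \ref{lem-wing-mutation-1}; for the tips $T, T'$ (each with $|t_{kk'}| = 1 \neq 0$) and any $m \neq k, k'$, $\mu_m$ gives a pre-fork with vertices $\{m,k,k'\}$ or a fork with point of return $m$ according as $m$ does or does not lie in $T_{k'}^k$, by Corollary \ref{cor-tip-mutation}; and the tilde-decorated quivers, being $\sigma$-images of these, inherit the statement by conjugating with $\sigma$, which fixes $m$ and $r$ and carries forks (resp.\ pre-forks) with a given point of return (resp.\ vertex set) to forks (resp.\ pre-forks) with the same data. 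The main obstacle I anticipate is not depth but bookkeeping: keeping the $k$--$k'$ multiplicity pinned at $1$ through every step, since that is what licenses Lemma \ref{lem-wing-mutation-1} and Corollary \ref{cor-tip-mutation}, and verifying that the two length-five chains $\mu_{[k,k',k,k',k]}$ and $\mu_{[k',k,k',k,k']}$ close at the same $\tilde Q$ with matching intermediate types — in particular the claim that $\tilde T = \mu_k(T)$ is again a tip, which does not come from a direct lemma but from the identity $\mu_{[k,k',k]}(Q) = \sigma(\mu_{[k',k]}(Q)) = \sigma(T')$ together with Corollary \ref{cor-tip-wing}.
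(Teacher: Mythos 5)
Your proposal is correct and follows essentially the same route as the paper's proof: the symmetry $\tilde{Q} = \mu_{[k,k',k,k',k]}(Q) = \mu_{[k',k,k',k,k']}(Q)$ from Remark \ref{rmk-special-cycles}, identification of the intermediate quivers as wings and tips via Corollaries \ref{cor-wing-pre-fork-key} and \ref{cor-tip-wing} (with Lemma \ref{lem-wing-mutation-1} and Corollary \ref{cor-tip-mutation} handling the last bullet), and $Q_{k'}^k = \emptyset$ to keep a single arrow between $k$ and $k'$ in $\mu_r(Q)$ so the same five-step identity yields $\mu_r(\tilde{Q})$; you simply spell out more bookkeeping (e.g., that $k,k'$ are not sinks or sources) than the paper does. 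One small remark: the preservation of the $k$--$k'$ multiplicity under $\mu_k$ and $\mu_{k'}$ is automatic, since mutation at a vertex never changes arrow multiplicities incident to that vertex, so the appeal to $Q_{k'}^k=\emptyset$ is only needed for mutation at $r$, exactly as in the paper.
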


\begin{proof}
    For the case that $q_{kk'} = 0$, mutating along $[k,k',k,k']$ or $[k',k,k',k]$ returns us to $Q$.
    Thus Corollaries \ref{cor-wing-pre-fork-key} and \ref{cor-tip-wing} and Lemma \ref{lem-wing-mutation-0} show the desired result.

    If $|q_{kk'}| = 1$, then mutating $Q$ along $[k,k',k,k',k]$ or $[k',k,k',k,k']$ produces the same quiver $\Tilde{Q}$, which is an isomorphic copy of $Q$ obtained by swapping $k$ and $k'$.
    As such, all of the $W$ quivers are wings, and all of the $T$ quivers are tips by Corollaries \ref{cor-wing-pre-fork-key} and \ref{cor-tip-wing}.
    Since $Q$ is isomorphic to $\Tilde{Q}$ by swapping $k$ and $k'$, the same can be sad about $\mu_r(Q)$ and $\mu_r(\Tilde{Q})$.
    Then $Q_{k'}^k = \emptyset$ implies that there is still a single arrow between $k$ and $k'$ in $\mu_r(Q)$.
    Hence, mutating along $[k,k',k,k',k]$ or $[k',k,k',k,k']$ produces the same quiver $\mu_r(\Tilde{Q})$.
    All of the isomorphisms follow.
\end{proof}

\begin{Cor} \label{cor-num-arrows}
    Let $Q$ be a pre-fork with vertices $\{r,k,k'\}$.
    Then all of the other quivers, excepting $\mu_r(Q)$ and $\mu_r(\Tilde{Q})$ in Lemma \ref{lem-pre-fork-tree} have more arrows than $Q$.
\end{Cor}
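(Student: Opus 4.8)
The plan is to read off, from the two labelled‑mutation‑graph fragments displayed in Lemma~\ref{lem-pre-fork-tree}, exactly which quivers the statement concerns and to bound their arrow counts one edge at a time, quoting the monotonicity results we already have. Apart from $Q$, $\mu_r(Q)$ (and $\mu_r(\Tilde Q)$ in part~b)), the quivers appearing are the wings $W,W'$ (resp. $W,W',\Tilde W,\Tilde W'$), the tips $T$ (resp. $T,T',\Tilde T,\Tilde T'$), and $\Tilde Q$; the latter is an isomorphic copy of $Q$ under the isomorphism fixing every vertex but $k,k'$, so it has exactly $|Q_1|$ arrows. Everything else will follow by chaining the strict inequalities of Lemma~\ref{lem-pre-fork-mutation-k} and Corollary~\ref{cor-tip-wing}, together with the isomorphisms recorded at the end of Lemma~\ref{lem-pre-fork-tree}(b).

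First I would dispatch the case $q_{kk'}=0$. Here the only quivers besides $Q$ and $\mu_r(Q)$ are $W=\mu_k(Q)$, $W'=\mu_{k'}(Q)$ and $T=\mu_{k'}(W)=\mu_k(W')$. Applying Lemma~\ref{lem-pre-fork-mutation-k} with $Q$ in the role of its pre-fork gives $|W_1|>|Q_1|$ and $|W'_1|>|Q_1|$. Since $W$ is a wing with point of return $k$ by Corollary~\ref{cor-wing-pre-fork-key}, Corollary~\ref{cor-tip-wing} gives $|T_1|>|W_1|$, whence $|T_1|>|Q_1|$. That settles part~a).

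For $|q_{kk'}|=1$, the quivers $W=\mu_k(Q)$, $W'=\mu_{k'}(Q)$, $T=\mu_{k'}(W)$ and $T'=\mu_k(W')$ are handled verbatim: Lemma~\ref{lem-pre-fork-mutation-k} gives $|W_1|,|W'_1|>|Q_1|$, and two applications of Corollary~\ref{cor-tip-wing} (once with $k,k'$ in their given roles, once with the roles reversed) give $|T_1|>|W_1|$ and $|T'_1|>|W'_1|$. For the remaining four quivers $\Tilde W,\Tilde W',\Tilde T,\Tilde T'$ I would invoke the isomorphisms $\Tilde W\cong W'$, $\Tilde W'\cong W$, $\Tilde T\cong T'$, $\Tilde T'\cong T$ from Lemma~\ref{lem-pre-fork-tree}(b); isomorphic quivers have the same number of arrows, so each of these also has more arrows than $Q$. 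Finally $\Tilde Q\cong Q$ gives $|\Tilde Q_1|=|Q_1|$, which is why $\mu_r(\Tilde Q)$ sits on the same footing as $\mu_r(Q)$ and is excluded.

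The one point to watch is that the $\Tilde{}$‑quivers in part~b) arise from tips by mutating at $k$ or $k'$, which are exactly the vertices excluded from Corollary~\ref{cor-tip-mutation}, so no direct arrow‑count estimate is available there; the fix is to route around this via the explicit isomorphisms of Lemma~\ref{lem-pre-fork-tree}(b) rather than to track arrows through those mutations. Beyond that, the proof is purely bookkeeping: identify each quiver in the fragment as a $\mu_k$‑ or $\mu_{k'}$‑image of a neighbour and quote the appropriate earlier lemma.
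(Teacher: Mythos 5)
Your proof is correct and takes essentially the route the paper leaves implicit (the corollary is stated without proof): chain the strict arrow-count inequalities of Lemma \ref{lem-pre-fork-mutation-k} and Corollary \ref{cor-tip-wing} (equivalently Lemmata \ref{lem-wing-mutation-0} and \ref{lem-wing-mutation-1}) along the displayed subgraph of Lemma \ref{lem-pre-fork-tree}, and handle the tilde quivers via the isomorphisms recorded there rather than through the $\mu_k$, $\mu_{k'}$ mutations themselves. Your side remark that $|\Tilde{Q}_1| = |Q_1|$ is also the right reading of the statement and matches how the paper later uses it (the equality case $P = Q$ or $P = \Tilde{Q}$ in Corollary \ref{cor-pre-fork-connected-subgraph}).
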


This brings us to another slight generalization from the exchange graph to the labelled mutation graph \cite[Lemma 6.7 \& Lemma 13.21]{warkentin_exchange_2014}.

\begin{Cor} \label{cor-pre-fork-connected-subgraph}
    Let $Q$ be a pre-fork with vertices $\{r,k,k'\}$.
    Then removing the edges labelled $\mu_r$ that are touching $Q$ or $\Tilde{Q}$ as in Lemma \ref{lem-pre-fork-tree} produces two connected subgraphs in the labelled mutation graph of $Q$, one containing $\mu_r(Q)$ and the other containing $Q$.
    In particular, if $P = \mu_\bw(Q)$ for any mutation sequence that stays in the connected subgraph containing $Q$, then $P$ is a tip, wing, pre-fork, or fork such that $|P_1| \geq |Q_1|$, where equality implies that $P = Q$ or $P = \Tilde{Q}$.
\end{Cor}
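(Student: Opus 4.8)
The plan is to reduce the statement to Lemma~\ref{lem-pre-fork-tree}, the fork tree Lemma~\ref{lem-fork-tree}, and the arrow-count monotonicity recorded in Lemmata~\ref{lem-fork-mutation}, \ref{lem-pre-fork-mutation}, \ref{lem-pre-fork-mutation-k}, \ref{lem-wing-mutation-0}, \ref{lem-wing-mutation-1} and Corollaries~\ref{cor-wing-pre-fork-key}, \ref{cor-tip-wing}, \ref{cor-num-arrows}, \ref{cor-tip-mutation}, in the same spirit as Warkentin's argument for the exchange graph. I would establish the ``in particular'' clause first; the splitting into two subgraphs then follows. Indeed, if $\mu_r(Q)$ lay in the component of $Q$ after deleting the one or two relevant $\mu_r$-edges, it would by that clause be a tip, wing, pre-fork, or fork with at least $|Q_1|$ arrows, with equality only at $Q$ or $\widetilde{Q}$; but mutation at $r\neq k,k'$ cannot change $q_{kk'}$ (since $Q_{k'}^{k}=\emptyset$), so $\mu_r(Q)$ still has $|q_{kk'}|<2$ arrows between $k$ and $k'$ and thus is not abundant, hence not a fork, and if it were a pre-fork, wing, or tip then, $Q$ being a pre-fork with point of return $r$, it would carry strictly fewer arrows than $Q=\mu_r(\mu_r(Q))$ --- so $\mu_r(Q)$, and likewise $\mu_r(\widetilde{Q})$ (which is joined to $\mu_r(Q)$ by the sequence $[k,k',k,k',k]$ of Lemma~\ref{lem-pre-fork-tree}, using no deleted edge), is excluded from that component.

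For the ``in particular'' clause I would argue by a minimal counterexample. If it fails, then among all pairs consisting of a pre-fork together with a reduced walk in its labelled mutation graph that avoids the deleted edges and ends at a \emph{bad} quiver --- one that is not a tip, wing, pre-fork, or fork, or has fewer than $|Q_1|$ arrows, or has exactly $|Q_1|$ arrows while being neither $Q$ nor $\widetilde{Q}$ --- choose one, $Q=P_0,P_1,\dots,P_N$, with $N$ least. Minimality forces $P_{N-1}$ to be one of the four good types with $|(P_{N-1})_1|\geq|Q_1|$, equality forcing $P_{N-1}\in\{Q,\widetilde{Q}\}$; write $P_N=\mu_v(P_{N-1})$. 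If $v$ is \emph{not} a return vertex of $P_{N-1}$ --- meaning the point of return of a fork, or the distinguished vertex $m$ of a pre-fork with triple $\{m,k,k'\}$, or the point of return $k$ of a wing, or the point of return $k'$ of a tip --- then the lemmata and corollaries listed above show $P_N$ is again one of the four good types with strictly more arrows than $P_{N-1}$, hence more than $|Q_1|$, so $P_N$ is not bad, a contradiction. One also checks, in passing, that the distinguished pair $\{k,k'\}$ and the attendant acyclicity conditions propagate along the walk, so the type at each $P_i$ is the expected one.

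The remaining possibility, that $v$ \emph{is} a return vertex of $P_{N-1}$, is the heart of the argument. If $P_{N-1}$ is a fork with point of return $v$, then by Lemma~\ref{lem-fork-tree} the edge $\{P_{N-1},P_N\}$ is a bridge whose $P_{N-1}$-side is a tree of forks; as $Q$ is a genuine pre-fork ($|q_{kk'}|<2$), it is not a fork, hence lies on the $P_N$-side, yet the prefix $P_0,\dots,P_{N-1}$ never traverses $\{P_{N-1},P_N\}$, placing $Q$ and $P_{N-1}$ on the same side --- a contradiction. If $P_{N-1}$ is a pre-fork with point of return $v$: when $P_{N-1}\in\{Q,\widetilde{Q}\}$ the edge $\{P_{N-1},P_N\}$ is one of the deleted edges, contradicting the choice of walk; when $P_{N-1}\notin\{Q,\widetilde{Q}\}$, so $|(P_{N-1})_1|>|Q_1|$, the reversed walk $P_{N-1},\dots,P_0=Q$ witnesses the failure of the statement for $P_{N-1}$ with a strictly shorter walk --- one checks it avoids the $\mu_v$-edges at $P_{N-1}$ and at $\widetilde{P}_{N-1}$, using that $\mu_v(\widetilde{P}_{N-1})\cong\mu_v(P_{N-1})=P_N$ inherits the arrow count and type of $P_N$ and so cannot appear on the good prefix --- contradicting minimality of $N$. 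The wing and tip return-vertex cases run the same way, after reducedness rules out the option that mutation at the point of return merely reverses the last step: in the remaining option, Corollaries~\ref{cor-wing-pre-fork-key} and \ref{cor-tip-wing} exhibit $P_N$ as a pre-fork or wing with fewer arrows and the same minimality reduction applies. I expect this whole family of ``backward'' cases, and especially the bookkeeping guaranteeing that at each quiver type the \emph{unique} arrow-decreasing mutation is the one pointing back toward $\{Q,\widetilde{Q}\}$, to be the main obstacle; it is precisely here that the bridge structure of Lemmata~\ref{lem-fork-tree} and \ref{lem-pre-fork-tree}, together with the conditions $|w_{kk'}|<2$ and $|t_{kk'}|<2$ separating genuine wings, tips, and pre-forks from forks, is indispensable.
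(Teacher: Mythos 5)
Your overall strategy --- induction/minimal counterexample on walks avoiding the deleted $\mu_r$-edges, showing every quiver reached is a tip, wing, pre-fork, or fork with controlled arrow count, and then deducing the splitting --- is the same as the paper's. But there is a genuine gap exactly at the point you flag as the main obstacle, namely the ``backward'' wing/tip cases, and your proposed fix does not close it. If $P_{N-1}$ is a wing whose point of return $v\in\{k,k'\}$ is mutated without reversing the previous step, this is precisely the last step of one of the five-term sequences $[k,k',k,k',k]$ or $[k',k,k',k,k']$ begun at a pre-fork five steps earlier; the result $P_N$ is then a pre-fork \emph{isomorphic to that earlier pre-fork under the swap of $k$ and $k'$, with exactly the same number of arrows}. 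Such a step is legitimate (it occurs inside the component, cf.\ Lemma \ref{lem-pre-fork-tree}(b)), so it produces neither a contradiction nor a quiver with ``fewer arrows'' than $|Q_1|$, and it does not hand you a strictly shorter bad walk based at a pre-fork: $P_{N-1}$ is a wing, not a pre-fork, so the statement cannot be re-based there, and $P_N$ is only known to be bad, not to yield a smaller counterexample. To rule out badness of $P_N$ you must identify it with $P_{N-5}$ via the $k\leftrightarrow k'$ swap and transport the inductive control (including $P_{N-5}\in\{Q,\widetilde{Q}\}\Rightarrow P_N\in\{Q,\widetilde{Q}\}$). This is exactly what the paper supplies and your sketch omits: it restricts the induction to sequences containing no five-term block (where mutating a wing at its point of return is simply impossible without violating reducedness or the block-free hypothesis), and handles all other sequences by an explicit rewriting argument --- delete the first block $[k,k',k,k',k]$ or $[k',k,k',k,k']$ and swap $k$ with $k'$ in the remainder, using the isomorphisms $W\cong\widetilde{W}'$, $T\cong\widetilde{T}'$, $Q\cong\widetilde{Q}$, $\mu_r(Q)\cong\mu_r(\widetilde{Q})$ of Lemma \ref{lem-pre-fork-tree}(b) --- iterating until the block count is zero. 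Without that mechanism your ``same minimality reduction applies'' is an assertion, not an argument.

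A smaller but real hole: in your first paragraph you claim that if $\mu_r(Q)$ were a pre-fork, wing, or tip it would have strictly fewer arrows than $Q=\mu_r(\mu_r(Q))$. Lemma \ref{lem-pre-fork-mutation} and Corollary \ref{cor-tip-mutation} cover mutation away from the distinguished vertices, but they say nothing when $\mu_r(Q)$ is a pre-fork whose own point of return is $r$ --- and this situation genuinely occurs (the paper's Section 5 exhibits two pre-forks with common point of return $\ell$ exchanged by $\mu_\ell$), possibly with \emph{more} arrows than $Q$. Excluding that subcase requires the finer invariant the paper carries through its induction (statement (2): the point of return of any pre-fork or fork reached is $k$, $k'$, or the last-mutated vertex), which forces the last edge of any walk ending at such a $\mu_r(Q)$ to be the deleted $\mu_r$-edge itself; arrow counting alone does not suffice.
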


\begin{proof}
    Assume that $|q_{kk'}| = 1$.
    We will induct on the length, $m$, of a mutation sequence $\bw$ such that the number of times, $t$, the sequence $[k,k',k,k',k]$ or $[k',k,k',k,k']$ appears in $\bw$ is zero.
    We will show two things:
    \begin{enumerate}
        \item $|Q_1| < |Q^{(1)}_1| < \dots < |Q^{(s)}_1| < |P_1|$, where the $Q^{(i)}$ are the pre-forks or forks in the order that they appear along the mutation sequence $\bw$;

        \item $P$ is a tip, wing, pre-fork, or fork, where the point of return is either $k, k'$, or the vertex that we last mutated at.
    \end{enumerate}
    
    For the base case $m = 0$, it naturally follows that $P = Q$, which proves our two desired results.
    Suppose now that $\bw$ is a mutation sequence with $t = 0$ and that $v$ is a vertex different from the one last mutated at.
    Let $P' = \mu_{\bw[v]}(Q)$.
    First note that we cannot have ever mutated at the point of return of a pre-fork or fork along $\bw$, as that would contradict $(1)$ for a mutation sequence of length less than $m$ by Lemmata \ref{lem-pre-fork-mutation} and \ref{lem-wing-mutation-1} and by Corollary \ref{cor-tip-mutation}.
    Additionally, if $P$ is a wing and $v$ is its point of return, then our previous statement implies that the last five mutations $\bw[v]$ would be either $[k,k',k,k',k]$ or $[k',k,k',k,k']$, a contradiction of $t = 0$.
    This also proves that $P$ is not a pre-fork or fork with point of return $v$.

    As such, if $P$ is a tip, wing, pre-fork, or fork, then $P'$ is a tip, wing, pre-fork, or fork with point of return $k, k'$, or $v$ by Lemmata \ref{lem-pre-fork-mutation}, \ref{lem-wing-mutation-1}, and \ref{lem-pre-fork-tree}, proving (2).
    While we may not have that $|P'_1| > |P_1|$ (consider the case of mutating a tip with point of return $k'$ at $k$ or vice versa), Corollary \ref{cor-num-arrows} proves that $|P'_1| > |Q^{(s)}_1|$ if $P'$ is a tip or wing.
    The previous Lemmata prove the same if $P$ is a pre-fork or fork.
    Hence, we have inductively shown (1) and (2) for all mutation sequences $\bw$ that do not contain either $[k,k',k,k',k]$ or $[k',k,k',k,k']$.
    This proves that $P$ is a tip, wing, pre-fork, or fork such that $|P_1| \geq |Q_1|$, where equality implies that $P = Q$, for all mutation sequences $\bw$ with $t = 0$.
    
    Now, assume that $P = \mu_{\bw}(Q)$ for some mutation sequence $\bw$ with $t > 0$ such that $\bw$ does not go through the edge labelled by $\mu_r$ connecting $Q$ and $\mu_r(Q)$ or $\Tilde{Q}$ and $\mu_r(\Tilde{Q})$.
    Form the mutation sequence $\bw'$ by removing the first $[k,k',k,k',k]$ or $[k',k,k',k,k']$ to appear in $\bw$ and swapping every instance of $k$ with $k'$ and of $k'$ with $k$ appearing after the removal.
    For example, the mutation sequence $\bw = [k,i,k,k',k,k',k,j,k,k']$ would become $\bw' = [k,i,j,k',k]$.
    Let $P' = \mu_{\bw'}(Q)$.
    Then $P'$ must be isomorphic to $P$, where the isomorphism is found by swapping $k$ and $k'$ and fixing the remaining vertices.
    We may continue repeating this process until we arrive at a $\Tilde{P} = \mu_{\Tilde{\bw}}(Q)$, where $\Tilde{\bw}$ has no $[k,k',k,k',k]$ or $[k',k,k',k,k']$ occurring.
    This forces $\Tilde{P}$ to be equal to $P$ or isomorphic to $P$ by swapping $k$ and $k'$.
    By our previous induction, we know that $\Tilde{P}$ is a tip, wing, pre-fork, or fork such that $|\Tilde{P}_1| \geq |Q_1|$, where equality only occurs if $\Tilde{P} = Q$, meaning $\Tilde{\bw} = []$, the empty mutation.
    Therefore, the quiver $P = \mu_\bw(Q)$ is a tip, wing, pre-fork, or fork such that $|P_1| \geq |Q_1|$, where equality implies that $P = Q$ or $\Tilde{Q}$, for all mutation sequences $\bw$ that stay in the connected subgraph containing $Q$.
    This implies that $\mu_r(Q)$ is not contained in the connected subgraph containing $Q$ and that the removal of the edges $\mu_r$ splits the labelled mutation graph into two connected subgraphs.
    
    If $q_{kk'} = 0$, then can repeat our argument, removing cycles of the form $[k,k',k,k']$ or $[k',k,k',k]$ instead, and it gives a very similar proof as the one to \cite[Lemma 6.7]{warkentin_exchange_2014}.
\end{proof}

\section{Finite Pre-Forkless Part} \label{sec-fpfp}

\begin{Def} \label{def-pre-forkless-part}
    Let $Q$ be a quiver and $\Gamma$ be the labelled mutation graph of $Q$.
    Then we define the \textit{pre-forkless part} as the connected subgraph of $\Gamma$ satisfying the following: there are no quivers in the pre-forkless part that lie in a connected subgraph defined by Corollary \ref{cor-pre-fork-connected-subgraph} containing a pre-fork or by Lemma \ref{lem-fork-tree} containing a fork.
    If this is a finite set, we say that the quiver $Q$ has a \textit{finite pre-forkless part}.
    Furthermore, if the pre-forkless part has $m$ quivers in it, we say that $Q$ has a $m$ pre-forkless part.
\end{Def}

\begin{Rmk} \label{rmk-pre-forkless-part}
    Note that any quiver that has a finite forkless part naturally has a finite pre-forkless part as well.
    Additionally, we speak of \textit{the} pre-forkless part of the labelled mutation graph of $Q$, as there must be some $Q'$ in the pre-forkless part and $r \in Q_0$ such that $\mu_r(Q')$ is a pre-fork.
    Applying Corollary \ref{cor-pre-fork-connected-subgraph}, we see that the pre-forkless part is not only connected but convex and unique for every quiver.
\end{Rmk}

Lemma \ref{lem-key-subgraph} then gives us our first example of quivers with a finite pre-forkless part.

\begin{Cor} \label{cor-key-count}
    Let $K$ be a key with vertices $\{k,k'\}$, where $n \geq 4$ is the number of vertices in $K$.
    \begin{itemize}
        \item If $q_{kk'} = 0$, then $K$ is mutation-equivalent to a total of $n-1$ keys (including $K$), 2 acyclic quivers, $2n-6$ wings, and $n-3$ tips, totalling to $4n-8 = 4(n-2)$ quivers in its pre-forkless part.

        \item If $|q_{kk'}| = 1$, then $K$ is mutation-equivalent to a total of $2n-2$ keys (including $K$), 2 acyclic quivers, 4 non-acyclic quivers, $4n-12$ wings, and $4n-12$ tips, totalling to $10n-20 = 10(n-2)$ quivers in its pre-forkless part.
    \end{itemize}
    Thus $K$ must have a finite pre-forkless part.
\end{Cor}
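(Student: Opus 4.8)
The plan is to recognize the explicit subgraph $\Sigma$ furnished by Lemma~\ref{lem-key-subgraph} as precisely the pre-forkless part of $K$, and then to tally its vertices by type. First I would apply Lemma~\ref{lem-key-subgraph} in the relevant case, $q_{kk'}=0$ or $|q_{kk'}|=1$, to obtain the connected subgraph $\Sigma$ of the labelled mutation graph $\Gamma$ of $K$: it contains $K$, and each of its vertices is a key ($K^{(i)}$, together with $\tilde K^{(i)}$ when $|q_{kk'}|=1$), one of the acyclic or --- in the $|q_{kk'}|=1$ case --- non-acyclic exceptional quivers, a wing, or a tip. Note that every vertex of $\Sigma$ has fewer than two arrows between $k$ and $k'$, so none is abundant, so none is a fork; and none is a pre-fork, since keys are acyclic and the classes of wings and tips are disjoint from that of pre-forks by the defining properties and the structural results of Section~\ref{sec-keys-pre-forks}.

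To see that $\Sigma$ is the pre-forkless part I would use the bridge structure of Lemma~\ref{lem-fork-tree} and Corollary~\ref{cor-pre-fork-connected-subgraph}. By Lemma~\ref{lem-key-subgraph}, every edge of $\Gamma$ with exactly one endpoint $v$ in $\Sigma$ is a mutation $\mu_r$ at a vertex $r\neq k,k'$ that is not a sink or source, whose other endpoint is a fork $F$ with point of return $r$, or a pre-fork $P$ with vertices $\{r,k,k'\}$. Since $v$ is a key, wing, tip, or exceptional quiver --- hence neither a fork nor a pre-fork --- the identity $\mu_r(F)=v$, respectively $\mu_r(P)=v$, exhibits $F$, respectively $P$, as the root of its fork tree, respectively its pre-fork subgraph, so that edge is exactly the separating bridge of Lemma~\ref{lem-fork-tree}, respectively one of the two $\mu_r$-edges of Corollary~\ref{cor-pre-fork-connected-subgraph}. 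Cutting all boundary edges of $\Sigma$ therefore detaches from $\Sigma$ a disjoint family of fork trees and pre-fork subgraphs; moreover, once a mutation path leaves $\Sigma$ it enters one of these and, by Lemma~\ref{lem-fork-tree} and Corollary~\ref{cor-pre-fork-connected-subgraph}, never returns, so these subgraphs exhaust $\Gamma\setminus\Sigma$. Thus $\Sigma$ is connected, contains no quiver lying in a forbidden subgraph of Definition~\ref{def-pre-forkless-part}, and is maximal with that property, so by Remark~\ref{rmk-pre-forkless-part} it is the pre-forkless part of $K$.

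It remains to count, reading off Lemma~\ref{lem-key-subgraph}. When $q_{kk'}=0$, $\Sigma$ consists of the $n-1$ keys $K^{(0)},\dots,K^{(n-2)}$ of the keyring cycle, the $2$ acyclic quivers $R,R'$, and, for each of the $n-3$ keys $K^{(i)}$ with $0<i<n-2$ (those in which neither $k$ nor $k'$ is a source or sink), the two wings $\mu_k(K^{(i)}),\mu_{k'}(K^{(i)})$ and the single tip $\mu_{[k,k']}(K^{(i)})=\mu_{[k',k]}(K^{(i)})$, equal because there is no arrow between $k$ and $k'$; this totals $(n-1)+2+2(n-3)+(n-3)=4n-8=4(n-2)$. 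When $|q_{kk'}|=1$, $\Sigma$ consists of the $2(n-1)$ keys $K^{(i)},\tilde K^{(i)}$ for $0\le i\le n-2$, the $2$ acyclic and $4$ non-acyclic exceptional quivers, and, for each $i$ with $0<i<n-2$, the four wings and four tips lying on the two length-five mutation paths $[k,k',k,k',k]$ and $[k',k,k',k,k']$ from $K^{(i)}$ to $\tilde K^{(i)}$ (now $\mu_{[k,k']}$ and $\mu_{[k',k]}$ yield distinct tips, as the single arrow between $k$ and $k'$ makes these sequences inequivalent); this totals $(2n-2)+(2+4)+4(n-3)+4(n-3)=10n-20=10(n-2)$. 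In each case $\Sigma$ is finite, so $K$ has a finite pre-forkless part.

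The step I expect to be the main obstacle is proving that $\Sigma$ equals the pre-forkless part --- specifically, ruling out that a wing or tip of $\Sigma$ secretly sits inside some pre-fork's connected subgraph, which Corollary~\ref{cor-pre-fork-connected-subgraph} \emph{a priori} permits since those subgraphs do contain wings and tips. The resolution is the bridge argument above: every wing or tip of $\Sigma$ bordering a fork or pre-fork is the $\mu_r$-neighbour of that quiver, with $r$ its point of return, hence lies on the $\Sigma$-side of the separating bridge of Lemma~\ref{lem-fork-tree} / Corollary~\ref{cor-pre-fork-connected-subgraph}. Once this and the class-disjointness statements are in place, the remainder is bookkeeping read directly off Lemma~\ref{lem-key-subgraph}.
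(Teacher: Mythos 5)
Your proposal is correct and follows essentially the same route as the paper, which states this corollary as an immediate consequence of Lemma~\ref{lem-key-subgraph}: identify the displayed subgraph as the pre-forkless part and tally its keys, exceptional (a)cyclic quivers, wings, and tips, giving $4(n-2)$ and $10(n-2)$ respectively. Your bridge argument via Lemma~\ref{lem-fork-tree} and Corollary~\ref{cor-pre-fork-connected-subgraph} simply makes explicit the identification that the paper leaves implicit in Definition~\ref{def-pre-forkless-part} and Remark~\ref{rmk-pre-forkless-part}, and your counts match the statement exactly.
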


We may then replicate the proof of Theorem \ref{thm-forkless-quiver}.

\begin{Thm} \label{thm-pre-forkless-quiver}
    Let $Q$ be a quiver that has a finite pre-forkless part.
    Then any full subquiver $Q'$ also has a finite pre-forkless part.
    Furthermore, if $M$ is the number of non-pre-forks mutation-equivalent to $Q$ and $m$ is the number of vertices in $Q'$, then the number of non-pre-forks mutation-equivalent to $Q'$ is bounded by $\max\{M,10m-20\}$.
\end{Thm}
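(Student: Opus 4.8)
The plan is to imitate the proof of Theorem~\ref{thm-forkless-quiver} almost line for line, replacing forks by pre-forks, abundant acyclic quivers by keys, Corollary~\ref{lem-fork-subquiver} by Lemma~\ref{lem-pre-fork-subquivers}, and Corollary~\ref{cor-abundant-acyclic} by Corollary~\ref{cor-key-count}. The structural fact to isolate at the outset is that mutation restricts to full subquivers: if $Q'$ is the full subquiver of $Q$ on a vertex set $V$ and $v \in V$, then $\mu_v(Q')$ is the full subquiver of $\mu_v(Q)$ on $V$, so by induction $\mu_{\bw}(Q')$ is the full subquiver of $\mu_{\bw}(Q)$ on $V$ for every mutation sequence $\bw$. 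Two consequences: every quiver mutation-equivalent to $Q'$ embeds as a full subquiver of a quiver mutation-equivalent to $Q$; and, picking for each quiver $P'$ in a collection mutation-equivalent to $Q'$ a sequence $\bw$ with $P' = \mu_{\bw}(Q')$, the assignment $P' \mapsto \mu_{\bw}(Q)$ is injective, since $\mu_{\bw}(Q) = \mu_{\bw''}(Q)$ forces $\mu_{\bw}(Q') = \mu_{\bw''}(Q')$ after restriction. I would also use that no quiver in a pre-forkless part is a fork or a pre-fork, by Lemma~\ref{lem-fork-tree} and Corollary~\ref{cor-pre-fork-connected-subgraph}.

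First case: $Q'$ is not mutation-equivalent to a full subquiver of any pre-fork mutation-equivalent to $Q$. If $Q'$ is mutation-equivalent to an abundant acyclic quiver, Corollary~\ref{cor-abundant-acyclic} bounds its pre-forkless part by $m$, so assume not. For $P'$ in the pre-forkless part of $Q'$ write $P' = \mu_{\bw}(Q')$; then $\mu_{\bw}(Q)$ is not a pre-fork (the case hypothesis), and not a fork either, since otherwise $P'$ would be a full subquiver of a fork, hence abundant acyclic or a fork by Lemma~\ref{lem-fork-acyclic}, and, not being a fork, would make $Q'$ mutation-equivalent to an abundant acyclic quiver, contrary to assumption. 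Thus $P' \mapsto \mu_{\bw}(Q)$ injects the pre-forkless part of $Q'$ into the set of quivers mutation-equivalent to $Q$ that are neither forks nor pre-forks, a subset of the $M$ non-pre-forks mutation-equivalent to $Q$; since $Q$ has a finite pre-forkless part, this set is finite (Lemma~\ref{lem-pre-fork-tree} and Corollary~\ref{cor-pre-fork-connected-subgraph} confine the non-fork, non-pre-fork quivers to the pre-forkless part together with the finitely many tips and wings flanking each pre-fork bordering it). Hence $Q'$ has a finite pre-forkless part of size at most $M$.

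Second case: $Q'$ is mutation-equivalent to a full subquiver $P'$, necessarily on $m$ vertices, of a pre-fork mutation-equivalent to $Q$. By Lemma~\ref{lem-pre-fork-subquivers}, $P'$ is abundant acyclic, a key, a fork, or a pre-fork. If $P'$ is abundant acyclic, Corollary~\ref{cor-abundant-acyclic} gives a pre-forkless part of size $m \leq 10m - 20$; if $P'$ is a key, Corollary~\ref{cor-key-count} gives one of size at most $10m - 20$. Otherwise $P'$ is a fork or a pre-fork; if in addition $Q'$ is mutation-equivalent to no abundant acyclic quiver and no key, the injection of the first case applies verbatim, since for $P'$ in the pre-forkless part of $Q'$ the containing quiver $\mu_{\bw}(Q)$ is not a fork (Lemma~\ref{lem-fork-acyclic}, as $P'$ is not a fork and $Q'$ is not mutation-equivalent to an abundant acyclic quiver) and not a pre-fork (Lemma~\ref{lem-pre-fork-subquivers}, as $P'$ is none of the four alternatives), so again the pre-forkless part of $Q'$ injects into the $M$ non-pre-forks of $Q$. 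In every case $Q'$ has a finite pre-forkless part with at most $\max\{M, 10m - 20\}$ quivers, the finitely many small values of $m$ for which $10m - 20$ is not yet a genuine bound being checked directly (note $M \geq 1$, as a pre-forkless part is never empty).

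The obstacle I expect is organizational rather than conceptual. First, one must pin down why $Q$ having a finite pre-forkless part forces it to have only finitely many quivers that are neither forks nor pre-forks: the pre-forkless part, unlike the forkless part of Theorem~\ref{thm-forkless-quiver}, is a proper subcollection of the non-pre-forks — it omits the tips and wings absorbed into pre-fork-subgraphs — so Lemma~\ref{lem-pre-fork-tree} and Corollary~\ref{cor-pre-fork-connected-subgraph} must be invoked to account for exactly these extra quivers. Second, the fourfold alternative of Lemma~\ref{lem-pre-fork-subquivers} has to be tracked carefully so that each possibility for what $Q'$ can be mutation-equivalent to is routed either into Corollary~\ref{cor-key-count} — the real payoff of Section~\ref{sec-keys-pre-forks}, and the source of the constant $10m - 20$ — or back into the injection argument. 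Everything else, including the small-vertex bookkeeping, is routine.
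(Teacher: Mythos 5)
Your skeleton matches the paper's proof (same case split on whether $Q'$ is mutation-equivalent to a full subquiver of a pre-fork, same use of Lemma~\ref{lem-pre-fork-subquivers} and of Corollaries~\ref{cor-abundant-acyclic} and \ref{cor-key-count} to produce the $10m-20$ term), but your finiteness mechanism has a genuine gap. You inject the pre-forkless part of $Q'$ into the set of quivers mutation-equivalent to $Q$ that are neither forks nor pre-forks, and you justify finiteness of that set by saying the non-fork, non-pre-fork quivers are ``confined to the pre-forkless part together with the finitely many tips and wings flanking each pre-fork bordering it.'' That is false: tips and wings flank \emph{every} pre-fork, not only those bordering the pre-forkless part, and the subgraph of Corollary~\ref{cor-pre-fork-connected-subgraph} attached to a single pre-fork contains infinitely many pre-forks, since by Lemma~\ref{lem-pre-fork-mutation} mutating at any vertex outside $\{r,k,k'\}$ yields a new pre-fork with strictly more arrows. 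Each of these is flanked by its own wings and tips (Lemma~\ref{lem-pre-fork-tree}, Corollary~\ref{cor-wing-pre-fork-key}; note $k,k'$ are never sinks or sources in a pre-fork), so e.g.\ a key on $n\geq 4$ vertices has a finite pre-forkless part yet is mutation-equivalent to infinitely many wings. Hence the target of your injection is infinite: ruling out ``fork'' and ``pre-fork'' for the ambient quiver $\mu_{\bw}(Q)$ does not give finiteness, and it does not give the bound by $M$ when $M$ is read, as the paper's proof reads it, as the number of quivers in the pre-forkless part of $Q$. (Read literally, ``non-pre-forks mutation-equivalent to $Q$'' includes all those wings, tips, and forks, so $M$ would typically be infinite and your bound vacuous, which is clearly not the intended content.)

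What must be shown --- and what the paper does instead --- is that the ambient quivers land in the pre-forkless part of $Q$ itself: in the first case the paper concludes that $Q'$ is mutation-finite (having two vertices, or being mutation-equivalent only to full subquivers of the finitely many quivers in the pre-forkless part of $Q$), and in the fork/pre-fork case it argues that every quiver in the pre-forkless part of $Q'$ is a full subquiver of a quiver in the pre-forkless part of $Q$. So besides excluding forks and pre-forks you must also exclude tips and wings as the ambient quiver (or show that a full subquiver of a tip or wing sitting in the pre-forkless part of $Q'$ forces $Q'$ to be mutation-equivalent to an abundant acyclic quiver or a key, the subcase you have already set aside); your argument never engages with this step, which is exactly where the quiver classes of Section~\ref{sec-keys-pre-forks} are needed. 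A smaller but real error: you assert $M\geq 1$ because ``a pre-forkless part is never empty,'' whereas the paper exhibits pre-forks with a $0$ pre-forkless part, so this cannot carry the small-$m$ bookkeeping.
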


\begin{proof}
    As $Q'$ is a full subquiver of $Q$, every quiver that is mutation-equivalent to $Q'$ is a full subquiver of a quiver mutation-equivalent to $Q$.
    If $Q'$ is not mutation-equivalent to a full subquiver of a pre-fork, then either $Q'$ has two vertices or it is only mutation-equivalent to subquivers of quivers in the pre-forkless part of $Q$.
    In either case, the quiver $Q'$ is mutation-finite and thus naturally has a finite pre-forkless part.
    If $Q'$ is mutation-equivalent to a full subquiver of a pre-fork, then it is either mutation-equivalent to an abundant acyclic quiver, a key, a fork, or a pre-fork by Lemma \ref{lem-pre-fork-subquivers}.
    
    In the first two cases, Corollaries \ref{cor-abundant-acyclic} and \ref{cor-key-count} tell us that $Q'$ must have finite pre-forkless part with at most $10m-20$ quivers.
    In the latter two cases, if $Q'$ is not mutation-equivalent to an abundant acyclic quiver or a key, then any quiver in its pre-forkless part must be a subquiver of a quiver in the pre-forkless part of $Q$.
    There are at most $M$, as there are only finitely many quivers in the pre-forkless part of $Q$.
    The subquiver $Q'$ must then have a finite pre-forkless part.
    Therefore, if $M$ is the number of non-pre-forks mutation-equivalent to $Q$ and $m$ is the number of vertices in $Q'$, then the number of non-pre-forks mutation-equivalent to $Q'$ is bounded by $\max\{M,10m-20\}$.
\end{proof}

Again, since having a finite pre-forkless part is a property of the labelled mutation class, we get the following corollary.

\begin{Cor} \label{cor-fpfp-hereditary}
    Having a finite pre-forkless part is a hereditary and mutation-invariant property. 
\end{Cor}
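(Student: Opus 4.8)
The statement packages two assertions, and I would dispatch them separately. First, \emph{mutation-invariance}: I would observe that the pre-forkless part was introduced (Definition~\ref{def-pre-forkless-part}) purely as a distinguished connected subgraph of the labelled mutation graph $\Gamma$ of $Q$, carved out by deleting every connected subgraph supplied by Corollary~\ref{cor-pre-fork-connected-subgraph} (around a pre-fork) or by Lemma~\ref{lem-fork-tree} (around a fork). Since any quiver $Q''$ mutation-equivalent to $Q$ has the \emph{same} labelled mutation graph (its vertex set is $[Q'']=[Q]$ and its edges are unchanged), the pre-forkless part of $Q''$ is literally the same subgraph as that of $Q$. In particular it is finite for $Q''$ exactly when it is finite for $Q$, so ``having a finite pre-forkless part'' descends to a property of the labelled mutation class. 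This is the content already flagged in the sentence preceding the corollary and in Remark~\ref{rmk-pre-forkless-part}.

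Second, \emph{heredity}: this is exactly Theorem~\ref{thm-pre-forkless-quiver}. If $Q$ has a finite pre-forkless part and $Q'$ is any (possibly disconnected) full subquiver, then that theorem already guarantees $Q'$ has a finite pre-forkless part, with the explicit bound $\max\{M,10m-20\}$ on the number of non-pre-forks mutation-equivalent to $Q'$, where $M$ counts the non-pre-forks mutation-equivalent to $Q$ and $m=\#Q'_0$. So no new argument is needed here; I would simply cite Theorem~\ref{thm-pre-forkless-quiver} and note that restriction to a full subquiver is the defining operation for a hereditary property (Definition~\ref{def-quivers}).

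Combining the two paragraphs: the property is preserved under mutation (first point) and under passage to full subquivers (second point), which is precisely what it means to be mutation-invariant and hereditary. There is essentially no obstacle: all the real work has been done in establishing Lemma~\ref{lem-key-subgraph}, Corollary~\ref{cor-key-count}, Corollary~\ref{cor-pre-fork-connected-subgraph}, and Theorem~\ref{thm-pre-forkless-quiver}; the corollary is a two-line bookkeeping step. If anything needs a word of care, it is making explicit that the pre-forkless part is \emph{well-defined} (i.e., unique) as a subgraph of $\Gamma$ so that ``finite'' is unambiguous — but this is already recorded in Remark~\ref{rmk-pre-forkless-part} via the convexity statement coming from Corollary~\ref{cor-pre-fork-connected-subgraph}, so I would just reference it rather than reprove it.
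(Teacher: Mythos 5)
Your proposal is correct and matches the paper's argument exactly: mutation-invariance is immediate because the pre-forkless part is a feature of the labelled mutation class (as the paper notes in the sentence preceding the corollary and in Remark~\ref{rmk-pre-forkless-part}), and heredity is precisely Theorem~\ref{thm-pre-forkless-quiver}. No differences worth noting.
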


As both finite and mutation-finite quivers have a finite forkless part, this gives us the Venn diagram of hereditary and mutation-invariant properties in Figure \ref{fig-venn-diagram}.
This builds on Warkentin's remark on how we can throw away the forks and pre-forks we encounter in a mutation-class without losing any information. 
Indeed, he used this concept to show that the quivers
\[\begin{tikzcd}
i & j\\
\ell & k
\arrow[from=1-1, to=1-2, "a"]
\arrow[from=1-2, to=2-2, "b"]
\arrow[from=2-2, to=2-1, "a"]
\arrow[from=2-1, to=1-1, "b"]
\end{tikzcd}\]
for all $a,b \geq 2$ are mutation-cyclic \cite[Example 6.11]{warkentin_exchange_2014}.
They are the first non-trivial family of quivers that have finite pre-forkless part but not a finite forkless part.
This means that the quiver 
\[\begin{tikzcd}
i & j\\
\ell & k
\arrow[from=1-1, to=1-2, "2"]
\arrow[from=1-2, to=2-2, "2"]
\arrow[from=2-2, to=2-1, "2"]
\arrow[from=2-1, to=1-1, "2"]
\end{tikzcd}\]
is one of the simplest mutation-cyclic quivers on 4 vertices that does not allow a mutation-cyclic subquiver on 3 vertices to embed into it, which can be shown by observing the quivers in its pre-forkless part and the subquivers of tips, wings, pre-forks, and forks.
 
As in the forkless case, there is no lower bound on the number of quivers in the pre-forkless part.
For example, the quivers
\[\begin{tikzcd}
& j\\
& \ell\\
k' & & k
\arrow[from=3-3, to=1-2, "5"']
\arrow[from=3-1, to=3-3]
\arrow[from=3-1, to=1-2, "5"]
\arrow[from=1-2, to=2-2, "3"]
\arrow[from=2-2, to=3-3, "4"']
\arrow[from=2-2, to=3-1, "4"]
\end{tikzcd} 
\text{ and }
\begin{tikzcd}
& j\\
& \ell\\
k' & & k
\arrow[from=3-3, to=1-2, "5"']
\arrow[from=3-1, to=1-2, "5"]
\arrow[from=1-2, to=2-2, "3"]
\arrow[from=2-2, to=3-3, "4"']
\arrow[from=2-2, to=3-1, "4"]
\end{tikzcd}\]
are both pre-forks with the same point of return $\ell$.
However, if we mutate at the point of return we get two more pre-forks with point of return $\ell$.
Naturally, this tells us that both our quivers have zero quivers in their pre-forkless part.
Each then has a 0 pre-forkless part.
However, neither of them has a finite forkless part, as alternating mutations at $j$ and $\ell$ will create an infinite number of pre-forks that are not abundant quivers.
We suspect that joining quivers with a 0 forkless part in a similar manner as above will always form quivers with a 0 pre-forkless part, but confirming that suspicion will require future research.
Since quivers with a finite forkless part have a finite pre-forkless part, our discussion from Section \ref{sec-forkless-part} demonstrates that there is no upper bound on the size of the pre-forkless part for quivers on 4 vertices. 

We also can carry over similar corollaries about disconnected quivers.
As disconnected quivers are not mutation-equivalent to forks by Corollary \ref{cor-connected-fork}, we naturally see that disconnected quivers are not mutation-equivalent to tips, wings, or pre-forks.

\begin{Cor} \label{cor-disconnected-pre-fork}
    A disconnected quiver is not mutation-equivalent to any tips, wings, or pre-forks.
\end{Cor}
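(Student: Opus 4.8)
The plan is to deduce this from Corollary~\ref{cor-connected-fork} by showing that every tip, wing, and pre-fork is a \emph{connected} quiver. The underlying principle is the same one behind Corollary~\ref{cor-connected-fork}: when a quiver is mutated at a vertex $v$, every new arrow $a\to b$ comes from a path $a\to v\to b$, every reversed arrow is incident to $v$, and every deleted $2$-cycle lies among vertices joined to $v$, so all changes occur inside the connected component of $v$; in particular mutation never joins two distinct components. Hence the disconnected quivers are closed under mutation, and no disconnected quiver can be mutation-equivalent to a connected one.

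So it suffices to check connectedness. I would first note that in each of Definitions~\ref{def-pre-fork}, \ref{def-wing}, and~\ref{def-tips} the quiver in question contains a full subquiver that is a fork (namely $P\setminus\{k\}$, $W\setminus\{k'\}$, and $T\setminus\{k\}$ respectively), and that a fork, being non-acyclic and free of loops and $2$-cycles, carries a directed $3$-cycle and therefore has at least three vertices; consequently the set of vertices other than $k$ and $k'$ is nonempty in all three cases. Now write each quiver as the union of the two induced subquivers obtained by deleting $k$ and by deleting $k'$. For a pre-fork $P$ with vertices $\{r,k,k'\}$, both $P\setminus\{k\}$ and $P\setminus\{k'\}$ are forks, hence abundant and so connected, and they overlap in the nonempty vertex set $P_0\setminus\{k,k'\}$; their union is $P$, so $P$ is connected. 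For a wing $W$ with point of return $k$, the subquiver $W\setminus\{k'\}$ is a fork and $W\setminus\{k\}$ is abundant acyclic --- each connected --- and they again overlap in $W_0\setminus\{k,k'\}\neq\emptyset$, so $W$ is connected. The tip case is identical, with $T\setminus\{k\}$ a fork and $T\setminus\{k'\}$ either a fork or an abundant acyclic quiver.

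Combining the two observations, a disconnected quiver $Q$ has every member of $[Q]$ disconnected, while every tip, wing, and pre-fork is connected, so $Q$ is mutation-equivalent to none of them. One could instead argue that each of these quivers is itself mutation-equivalent to a fork --- a tip or wing by Lemmata~\ref{lem-wing-mutation-0} and~\ref{lem-wing-mutation-1}, and a pre-fork either being a fork outright or, via Lemma~\ref{lem-pre-fork-tree}, being mutation-equivalent to a wing --- and then quote Corollary~\ref{cor-connected-fork} verbatim; this route requires tracing through more of the earlier lemmata, which is why I prefer the direct connectedness argument. The only point that needs genuine care is the vertex-count bookkeeping ensuring $P_0\setminus\{k,k'\}$, $W_0\setminus\{k,k'\}$, and $T_0\setminus\{k,k'\}$ are nonempty so that the two halves actually overlap; once that is in hand the result is immediate from the definitions.
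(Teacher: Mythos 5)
Your argument is correct, but its main route differs from the paper's. The paper disposes of this corollary in one line: by the mutation lemmata of Section~\ref{sec-keys-pre-forks} (Lemma~\ref{lem-pre-fork-mutation-k}, Lemmata~\ref{lem-wing-mutation-0} and~\ref{lem-wing-mutation-1}, Corollary~\ref{cor-tip-mutation}), every tip, wing, and pre-fork is mutation-equivalent to a fork, so the claim follows immediately from Corollary~\ref{cor-connected-fork} by transitivity of mutation-equivalence --- this is exactly the alternative you sketch and set aside. Your preferred argument instead proves two things directly: that mutation preserves the partition into connected components (so disconnected quivers form a mutation-closed class --- which is in fact the unstated principle underlying Corollary~\ref{cor-connected-fork} itself), and that every pre-fork, wing, and tip is connected, by covering it with the two full subquivers obtained by deleting $k$ and $k'$, each of which is a fork or an abundant acyclic quiver and hence connected, and which overlap in the nonempty set of remaining vertices (nonempty because a fork, containing a directed cycle with no loops or $2$-cycles, has at least three vertices). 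Both routes are sound; yours is more self-contained and elementary, using only the definitions and the connectivity principle rather than the earlier mutation machinery, while the paper's is shorter given that machinery and keeps the corollary exactly parallel to Corollary~\ref{cor-connected-fork}. Your attention to the overlap being nonempty is the right bookkeeping point, and it does hold in all three cases.
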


Combined with Theorem \ref{thm-pre-forkless-quiver}, this naturally gives an analogue of Corollary \ref{cor-disconnected-finite}.

\begin{Cor}
    Let $Q$ be a quiver with a finite pre-forkless part.
    If $Q'$ is a disconnected full subquiver of $Q$, then $Q'$ must be mutation-finite.
\end{Cor}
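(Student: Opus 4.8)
The plan is to replicate the proof of Corollary~\ref{cor-disconnected-finite}, substituting Theorem~\ref{thm-pre-forkless-quiver} and Corollary~\ref{cor-disconnected-pre-fork} for their forkless counterparts. First I would record the elementary fact that being disconnected is mutation-invariant: if $v$ lies in a connected component $A$ of a quiver, then $\mu_v$ only reverses arrows incident to $v$ and adds arrows $a \to b$ coming from paths $a \to v \to b$, so every new or reversed arrow has both endpoints in $A$; no arrow is ever created between two distinct components, and $\mu_v(Q')$ stays disconnected. Iterating, every quiver in the labelled mutation class $[Q']$ is disconnected.

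Next I would apply Theorem~\ref{thm-pre-forkless-quiver}: because $Q$ has a finite pre-forkless part and $Q'$ is a full subquiver of $Q$, the quiver $Q'$ also has a finite pre-forkless part. The remaining point is that, for this particular $Q'$, the pre-forkless part is all of $[Q']$. Indeed, since every quiver mutation-equivalent to $Q'$ is disconnected, Corollary~\ref{cor-connected-fork} rules out any of them being a fork and Corollary~\ref{cor-disconnected-pre-fork} rules out any of them being a tip, wing, or pre-fork. Hence the labelled mutation graph of $Q'$ contains no fork and no pre-fork, so there is no connected subgraph as in Lemma~\ref{lem-fork-tree} or Corollary~\ref{cor-pre-fork-connected-subgraph} to remove, and by Definition~\ref{def-pre-forkless-part} the pre-forkless part is the entire (connected) labelled mutation graph of $Q'$.

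Combining the two observations, $[Q']$ equals the pre-forkless part of $Q'$, which is finite; therefore $Q'$ is mutation-finite, as claimed. I do not expect a genuine obstacle here: the only step requiring care is the mutation-invariance of disconnectedness, which is immediate from the definition of quiver mutation, and everything else is a direct invocation of earlier results. Alternatively, one could split $Q'$ into its connected components and apply Theorem~\ref{thm-connected-fork} to each, but the argument through the pre-forkless part fits the present development more naturally.
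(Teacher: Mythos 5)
Your proposal is correct and matches the paper's intended argument: the paper derives this corollary precisely by combining Theorem~\ref{thm-pre-forkless-quiver} with Corollary~\ref{cor-disconnected-pre-fork} (and Corollary~\ref{cor-connected-fork}), noting that a disconnected quiver stays disconnected under mutation, so its entire labelled mutation class coincides with its finite pre-forkless part. Your explicit check that disconnectedness is mutation-invariant and that no removable subgraphs exist is exactly the reasoning the paper leaves implicit.
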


Hence, the quiver 
\[\begin{tikzcd}
& m \\
& j\\
& \ell\\
i & & k
\arrow[from=1-2, to=2-2, "2"]
\arrow[from=2-2, to=3-2, "2"]
\arrow[from=2-2, to=4-3, "3"]
\arrow[from=3-2, to=4-3, "3"']
\arrow[from=4-1, to=2-2, "2"]
\arrow[from=4-1, to=3-2, "2"']
\arrow[from=4-3, to=4-1, "2"]
\end{tikzcd}\]
from before cannot have a finite pre-forkless part for the exact same reasons it cannot have a finite forkless part.

\printbibliography

\end{document}